\newtheorem{Theorem}{Theorem}[section]
\newtheorem{Proposition}[Theorem]{Proposition} 
\newtheorem{Lemma}[Theorem]{Lemma}
\newtheorem{Definition}[Theorem]{Definition}
\newtheorem{Corollary}[Theorem]{Corollary}
\theoremstyle{definition}
\newtheorem{Remark}[Theorem]{Remark}
\newcommand{\p}{{\mathbb{P}}}
\newcommand{\Z}{\mathbb{Z}}
\newcommand{\C}{\mathbb{C}}
\def\bG{{\mathbb{G}}}
\def\At{{\mbox{At}}}
\def\la{\langle}
\def\ra{\rangle}
\def\hA{\hat{A}}
\def\hB{\hat{B}}
\def\hC{\hat{C}}
\def\hD{\hat{D}}
\def\l{\lambda}
\def\trace{\mbox{Trace}}
\def\Spec{\mbox{Spec}}
\def\id{{\mathrm{id}}}
\def\k{{\Bbbk}}
\def\sl{{\mathfrak{sl}}}
\def\D{{\mathcal{D}}}
\def\E{{\sf{E}}}
\def\F{{\sf{F}}}
\def\Fl{{\mathbb{F}l}}
\def\sF{{\mathcal{F}}}
\def\bG{{\mathbb G}}
\def\sG{{\mathcal{G}}}
\def\tY{{\tilde{Y}}}
\def\bA{{\mathbb{A}}}
\def\sP{{\mathcal{P}}}
\def\sQ{{\mathcal{Q}}}
\newcommand{\Cone}{\mathrm{Cone}}
\def\O{{\mathcal O}}
\def\sE{{\mathcal{E}}}
\def\H{{\mathcal{H}}}
\def\dim{\mbox{dim}}
\newcommand{\DMod}{\operatorname{D-mod}}
\newcommand{\modu}{\operatorname{-gr mod}}
\DeclareMathOperator{\Hom}{Hom}
\DeclareMathOperator{\Ext}{Ext}
\DeclareMathOperator{\supp}{supp}
\DeclareMathOperator{\End}{End}
\DeclareMathOperator{\Aut}{Aut}
\begin{document}

\title{Coherent sheaves and categorical $\sl_2$ actions}

\author{Sabin Cautis}
\email{scautis@math.harvard.edu}
\address{Department of Mathematics\\ Rice University \\ Houston, TX}

\author{Joel Kamnitzer}
\email{jkamnitz@math.toronto.edu}
\address{Department of Mathematics\\ University of Toronto \\ Toronto, ON Canada}

\author{Anthony Licata}
\email{amlicata@math.stanford.edu}
\address{Department of Mathematics\\ Stanford University \\ Palo Alto, CA}

\begin{abstract}
We introduce the concept of a geometric categorical $\sl_2$ action and relate it to that of a strong categorical 
$\sl_2$ action. The latter is a special kind of 2-representation in the sense of Rouquier. The main result is that a geometric categorical $\sl_2$ action induces a strong categorical $\sl_2$ action. This allows one to apply the theory of strong $\sl_2$ actions to various geometric situations. Our main example is the construction of a geometric categorical $\sl_2$ action on the derived category of coherent sheaves on cotangent bundles of Grassmannians. 
\end{abstract}

\date{\today}
\maketitle
\tableofcontents

\section{Introduction}

\subsection{Actions of $ \sl_2$ on categories}
An action of $\sl_2 $ on a finite-dimensional vector space $ V $ consists of a direct sum decomposition $ V = \oplus V(\lambda) $ into weight spaces and linear maps $ E(\lambda) : V(\lambda -1) \rightarrow V(\lambda + 1) $ and $ F(\lambda) : V(\lambda +1) \rightarrow V(\l -1) $.   These maps satisfy the condition
\begin{equation} \label{eq:commutation}
E(\lambda-1) F(\lambda -1) - F(\l+1) E(\l+1) = \l \id_{V(\l)}.
\end{equation}
Such an action automatically integrates to the group $ SL_2 $.  In particular, the reflection element $ t = \bigl[ \begin{smallmatrix} \phantom{-}0 &1 \\ -1& 0 \end{smallmatrix} \bigr] \in SL_2$ acts on $ V $, inducing an isomorphism $ V(-\lambda) \rightarrow V(\lambda) $.

A na\"ive categorical action of $ \sl_2 $ consists of a sequence of categories $ \D(\l) $ with functors
$$ \E(\l) : \D(\l-1) \rightarrow \D(\l +1) \text{ and } \F(\l) : \D(\l +1) \rightarrow \D(\l -1)$$ 
between them. These functors should satisfy a categorical version of (\ref{eq:commutation}) above, 
\begin{equation} \label{eq:catcommutation}
\E(\lambda-1) \circ \F(\lambda -1) \cong \id_{\D(\l)}^{\oplus \l} \oplus \F(\l+1) \circ \E(\l+1), \quad  \text{ for }  \l \ge 0 
\end{equation}
and an analogous condition when $ \l \le 0 $. This is just a na\"ive notion of categorical $ \sl_2 $ action, since ideally there should be morphisms between the functors which induce the isomorphisms (\ref{eq:catcommutation}).

The purpose of this paper and the accompaning papers \cite{ckl1}, \cite{ckl2}, is to apply categorical $\sl_2 $ actions to the geometric situation where $ \D(\l) $ is the derived category of coherent sheaves of a variety.  The main example discussed in this paper is the case of cotangent bundles to Grassmannians of planes in a fixed $N$ dimensional vector space. Thus we fix $N$ and as $ k $ varies we let  $ \D(N-2k) := DCoh(T^\star(\bG(k,N))$.

\subsection{Strong categorical $\sl_2 $ actions and geometric categorical $\sl_2 $ actions}

In this paper we have two main definitions. First we define the notion of a \emph{strong categorical action} of $ \sl_2 $ (section \ref{se:defstrong}), a modification of definitions due to Chuang-Rouquier \cite{CR}, Lauda \cite{l1}, and Rouquier \cite{R}.  These axioms include the additional data of morphisms of functors $ X : \E \rightarrow \E$  and $ T : \E^2 \rightarrow \E^2 $ which rigidify the isomorphisms (\ref{eq:catcommutation}).  In a companion paper, \cite{ckl2}, where we prove (using ideas from \cite{CR}) that whenever there is a strong categorical action of $\sl_2$ whose weight spaces are triangulated categories, then we can construct a triangulated equivalence between $ \D(\l) $ and $ \D(-\l) $.

The second notion introduced in this paper is that of a \emph{geometric categorical} $\sl_2 $ action (section \ref{se:defgeom}). This means that we have a sequence of varieties $ Y(\l) $ and Fourier-Mukai kernels $ \sE(\l), \sF(\l) $, which are objects in the derived categories of the products $ Y(\l-1) \times Y(\l+1) $.  These kernels are required to satisfy the commutation relation (\ref{eq:catcommutation}), but only at the level of cohomology. We also demand that there exist certain deformations $ \tY(\l) \rightarrow \bA^1$ of $ Y(\l) $ with some special properties.  The idea to impose the existence of deformations was inspired by the work of Huybrechts-Thomas \cite{HT} (see Remark \ref{re:HT}).

The main theorem of this paper (Theorem \ref{th:main}) is that a geometric categorical $\sl_2$ action gives rise to a strong categorical $ \sl_2 $ action when the categories involved are the derived categories of coherent sheaves $ D(Y(\l)) $ and where the functors $ \E(\l), \F(\l) $ are induced by the kernels $\sE(\l), \sF(\l)$.  Roughly speaking, the morphism $ X: \sE(\l) \rightarrow \sE(\l)[2] $ is the obstruction to deforming $ \sE(\l) $ in the family $ \tY(\l-1) \times_{\bA^1} \tY(\l+1) $.  Sections \ref{se:formality} and \ref{se:proofofnilHecke} are devoted to the proof of this theorem.  In practice it is much easier to check that certain geometric constructions give rise to a geometric categorical $\sl_2$ action rather than a strong categorical $\sl_2$ action. So Theorem \ref{th:main} provides a bridge between geometry (and the results in \cite{ckl1}) and more formal algebraic/categorical constructions provided by a strong $\sl_2$ action (such as the equivalences constructed in \cite{ckl2}). 

\subsection{Relation to 2-categories of Rouquier and Lauda}

In \cite{R}, Rouquier defined a 2-categorical version of quantum $\sl_2$ (based on work in \cite{CR}). This is closely related (not coincidentally) to our definition of strong categorical $ \sl_2 $ action.  A strong categorical $\sl_2$ action immediately gives rise to a 2-functor from Rouquier's 2-category into the 2-category of triangulated categories. Thus another way of viewing Theorem \ref{th:main} is to note that it provides a way of obtaining natural 2-representations of Rouquier's 2-category.  A strong categorical $\sl_2$ action is a slightly more restrictive notion than a 2-representation of Rouquier's 2-category, however, because our definition demands additional conditions on the endomorphism algebras $\Ext(\E(\l)^{(r)},\E(\l)^{(r)})$.  These additional restrictions are used in the proof of the equivalence of triangulated categories between $\D(\l)$ and $\D(-\l)$ considered in \cite{ckl2}.

In \cite{l1} Lauda also constructs a 2-category which categorifies quantum $\sl_2$. Lauda's definition is similar to Rouquier's but involves some extra technical relations which, from our point of view, are not entirely necessary;  by this we mean that the equivalence between $\D(-\l)$ and $\D(\l)$ constructed in \cite{ckl2} does not require these extra relations. It is not obvious that a geometric categorical $\sl_2$ action gives rise to a 2-representation of Lauda's 2-category (although it is natural to conjecture that it does). Understanding better the role played by these extra relations in Lauda's definition is an interesting problem. 

Rouquier \cite{R} and Khovanov-Lauda \cite{KL} have also defined analogous (and closely related) 2-categories for other Kac-Moody Lie algebras.  In a future work \cite{ckl3}, we will construct 2-representations of these 2-categories (in the simply-laced case) on derived categories of coherent sheaves on quiver varieties, generalizing the action on cotangent bundles to Grassmannians described below.

\subsection{Contangent bundles to Grassmannians}
Our main example of a geometric categorical $ \sl_2 $ action is that of cotangent bundles to Grassmannians.  We fix $ N $ and consider $ Y(N-2k) := T^\star \bG(k, N) $ as our varieties.  There are natural correspondence between Grassmannians which give us the kernels $ \sE, \sF $.  In section \ref{sec:mainexample}, we use the results of \cite{ckl1} to prove that this is indeed a geometric categorical $ \sl_2 $ action.  

Hence as a corollary of this paper and of \cite{ckl2}, we obtain an explicit equivalence of triangulated categories between $ DCoh(T^\star \bG(k,N))$ and $ DCoh(T^\star \bG(N-k,N)) $.  This answers an open question raised by papers of Kawamata \cite{kaw2} and Namikawa \cite{nam2} (see \cite{ckl2} for more details).

Let us now informally explain how the example of derived categories of coherent sheaves on cotangent bundles to Grassmannians is related to examples to categorical $ \sl_2 $ actions studied by Chuang-Rouquier \cite{CR}, and Lauda \cite{l1}.  

In section 7.4 of \cite{CR}, Chaung-Rouquier defined a strong categorical $ \sl_2 $ action (in their sense) on the singular blocks of category $ \mathcal{O} $ for $ \mathfrak{gl}_N $, following the work of Bernstein-Frenkel-Khovanov \cite{BFK}. By Koszul duality of Beilinson-Ginzburg-Sorgel \cite{BGS}, this can be considered as a categorical $ \sl_2 $ action on parabolic category $ \O $ for $ \mathfrak{gl}_N$.  We may restrict to the particular parabolic categories considered in \cite{BFK} section 4.  Under the Beilinson-Bernstein localization, these categories are equivalent to the categories of $ B $-equivariant $ D$-modules on the Grassmannians $ \bG(k,N)$.  $D$-modules on $ \bG(k,N) $ are related to coherent sheaves on $ T^* \bG(k,N) $ by taking associated graded.

In section 7.7.2 of \cite{CR}, Chuang-Rouquier defined a strong categorical $ \sl_2 $ action where the categories $ \D(N-2k) $ were categories of modules over $H^*(\bG(k,N))$.  Similarly, in \cite{l1}, Lauda defined a functor from his 2-category to a 2-category whose 1-morphisms are certain $ H^*(\bG(k,N)), H^*(\bG(l,N)) $ graded bimodules. There is a functor from the category of perverse sheaves on $ \bG(k,N) $ (which is equivalent to $ \DMod(\bG(k,N))$) to $ H^*(\bG(k,N)) $ graded modules by taking total cohomology.  

Also there is a functor from the derived category of coherent sheaves on $ T^*(\bG(k,N)) $ to dg modules over $ H^*(\bG(k,N)) $ by taking $ \Ext(\O_{\bG(k,N)}, \cdot)$, because $ \Ext^*(\O_{\bG(k,N)}, \O_{\bG(k,N)}) = H^*(\bG(k,N)) $ (see Remark 5.11 in \cite{ck1}).

To summarize we have the following rough picture:
\begin{equation*}
\xymatrix{
\DMod(\bG(k,N))   \ar[d]_{\text{total cohomology}} \ar[r]^{\text{ass. graded}} & Coh(T^*(\bG(k,N)) \ar[ld]^{\text{Ext into $\O_{\bG(k,N)}$}} \\
 H^*(\bG(k,N))\modu
}
\end{equation*}

We expect that these ``functors'' (if they are made precise as functors) will intertwine the three $ \sl_2 $ actions.

\subsection{Acknowledgements}

We would like to thank Daniel Huybrechts, Mikhail Khovanov, and Raphael Rouquier for helpful conversations.

S.C. was partially supported by National Science Foundation Grant 0801939. He also thanks MSRI for its support and hospitality during the spring of 2009.  J.K. was partially supported by a fellowship from the American Institute of Mathematics. A.L. would like to thank the Max Planck Institute in Bonn for support during the 2008-2009 academic year.

\section{Main definitions and results}\label{sec:maindefs}

First, a bit of notational discussion.  We will denote composition of functors by juxtaposition and reserve the symbol $ \circ $ to denote composition of morphisms.  Also we will denote the identity morphism by $I$ and the identity functor by $ \id  $. We denote by $\bG(k,n)$ the Grassmannian parametrizing $k$-planes in $\C^n$. We denote by $H^\star(\bG(k,n))$ the usual cohomology of $\bG(k,n)$ but shifted so that it is symmetric with respect to degree zero (equivalently, it is the intersection cohomology).  For the purposes of the definition of strong $ \sl_2 $ categorification, we will use $ \la \cdot \ra $ for the grading, whereas later in the paper we will use replace $ \la k \ra $ by $ [k]\{-k\}$.  For example, 
\begin{align*}
H^\star(\p^n) &= \k \la n \ra \oplus \k \la n-2 \ra \oplus \dots \oplus \k \la -n+2 \ra \oplus \k \la -n \ra \\
&=  \k[n]\{-n\} \oplus \k[n-2]\{-n+2\} \oplus \dots \oplus \k[-n+2]\{n-2\} \oplus \k[-n]\{n\}.
\end{align*}
By convention $H^\star(\p^{-1})$ is zero. 

\subsection{Strong $\sl_2$ categorification} \label{se:defstrong}

Let $\k$ be a field. A {\bf strong categorical $\sl_2$ action} consists of the following data.

\begin{enumerate}
\item A sequence of $\k$-linear, $\Z$-graded, additive categories $\D(-N), \dots, \D(N)$ which are idempotent complete. We say that a category is graded it it has a shift functor $\la \cdot \ra$ which is an equivalence.
\item Functors 
$$\E^{(r)}(\l): \D(\l-r) \rightarrow \D(\l+r) \text{ and } \F^{(r)}(\l): \D(\l+r) \rightarrow \D(\l-r)$$
for $r\geq 0$ and $\l \in \Z$. We assume these functors are additive and commute with shift.  We will usually write $\E(\l)$ for $\E^{(1)}(\l)$ and $\F(\l)$ for $\F^{(1)}(\l)$.
It is convenient to set $\E^{(0)}(\l)$ and $\F^{(0)}(\l)$ to be the identity functor $\id$ on $Y(\l)$.
\item Morphisms
$$\eta: \id \rightarrow \F^{(r)}(\l)  \E^{(r)}(\l) \la r \l \ra \text{ and } 
\eta: \id \rightarrow \E^{(r)}(\l)  \F^{(r)}(\l) \la -r\l \ra$$
$$\varepsilon: \F^{(r)}(\l) \E^{(r)}(\l) \rightarrow \id \la r\l \ra \text{ and }
\varepsilon: \E^{(r)}(\l)  \F^{(r)}(\l) \rightarrow \id \la -r\l \ra. $$
\item Morphisms
$$\iota : \E^{(r+1)}(\l) \la r \ra \rightarrow \E(\l+r) \E^{(r)}(\l-1) \text{ and } \pi : \E(\l+r) \E^{(r)}(\l-1) \rightarrow \E^{(r+1)}(\l) \la -r \ra.$$
\item Morphisms
$$X(\l): \E(\l) \la -1 \ra \rightarrow \E(\l) \la 1 \ra \text{ and }
T(\l): \E(\l+1) \E(\l-1) \la 1 \ra \rightarrow \E(\l+1) \E(\l-1) \la -1 \ra.$$
\end{enumerate}

On this data we impose the following additional conditions.

\begin{enumerate}
\item Each (graded piece of the) $\Hom$ space between two objects in $\D(\l)$ is finite dimensional. 
\item The morphism $ \eta $ and $ \varepsilon $ are units and counits of adjunctions
\begin{enumerate}
\item $\E^{(r)}(\l)_R = \F^{(r)}(\l) \la r\l \ra$ for $r \ge 0$
\item $\E^{(r)}(\l)_L = \F^{(r)}(\l) \la -r\l \ra$ for $r \ge 0$
\end{enumerate}
\item We have isomorphisms
$$\E^{(r_2)}(\l+r_1) \E^{(r_1)}(\l-r_2) \cong \E^{(r_1+r_2)}(\l) \otimes_{\k} H^\star(\bG(r_1,r_1+r_2))$$
For example,
$$\E(\l+1) \E(\l-1) \cong \E^{(2)}(\l) \la -1 \ra \oplus \E^{(2)}(\l) \la 1 \ra.$$
In general we do not impose that this isomorphism is induced by a particular natural transformation. However, in the case $r_1=r$ and $r_2=1$ we do require that the maps
$$\oplus_{i=0}^r (X(\l + r)^i  I) \circ \iota \la -2i \ra : \E^{(r+1)}(\l) \otimes_\k H^\star(\p^r) \rightarrow \E(\l+r) \E^{(r)}(\l-1)$$
and
$$\oplus_{i=0}^r \pi \la 2i \ra \circ (X(\l+r)^i I) : \E(\l+r) \E^{(r)}(\l-1) \rightarrow \E^{(r+1)}(\l) \otimes_\k H^\star(\p^r)$$ 
are isomorphisms.  We also have the analogous condition when $r_1=1$ and $r_2=r$. 

\begin{Remark} Intuitively, $\iota$ maps into the ``bottom'' factor of 
$$\E(\l+r)  \E^{(r)}(\l-1) \cong \E^{(r+1)}(\l) \otimes_\k H^\star(\p^r)$$
while $\pi$ maps out of the ``top'' factor. 
\end{Remark}

\item If $ \l \le 0 $ then
\begin{equation*}
\F(\l+1) \E(\l+1) \cong \E(\l-1) \F(\l-1) \oplus \id \otimes_{\k} H^\star(\p^{-\l-1}).
\end{equation*}
The isomorphism is induced by
$$\sigma + \sum_{j=0}^{-\l-1} (I  X(\l+1)^j) \circ \eta: \E(\l-1)  \F(\l-1) \oplus \id \otimes_{\k} H^\star(\p^{-\l-1}) \xrightarrow{\sim} \F(\l+1)  \E(\l+1)$$
where $\sigma$ is the composition of maps
\begin{eqnarray*}
\E(\l-1)  \F(\l-1) & \xrightarrow{\eta I I} & \F(\l+1)  \E(\l+1)  \E(\l-1)  \F(\l-1) \la \l+1 \ra \\
& \xrightarrow{I  T({\l})  I} & \F(\l+1)  \E(\l+1)  \E(\l-1)  \F(\l-1) \la \l-1 \ra \\
& \xrightarrow{I I \epsilon} & \F(\l+1)  \E(\l+1).
\end{eqnarray*}
Similarly, if $\l \ge 0$ then 
\begin{equation*}
\E(\l-1)  \F(\l-1) \cong \F(\l+1)  \E(\l+1) \oplus \id \otimes_{\k} H^\star(\p^{\l-1})
\end{equation*}
with the isomorphism induced in the same way as above. 

\item The $X$s and $T$s satisfy the nil affine Hecke relations:
\begin{enumerate}
\item $T(\l)^2 = 0$
\item $(I  T(\l-1)) \circ (T(\l+1)  I) \circ (I  T(\l-1)) = (T(\l+1) I) \circ (I T(\l-1)) \circ (T(\l+1)  I)$ as endomorphisms of $ \E(\l+2)\E(\l)\E(\l-2)$.
\item $(X(\l+1)  I) \circ T(\l) - T(\l) \circ (I  X(\l-1)) = I = - (I  X(\l-1)) \circ T(\l) + T(\l) \circ (X(\l+1) I)$ as endomorphisms of $ \E(\l+1)\E(\l-1) $.
\end{enumerate}

\item For $r \ge 0$ we have $\Hom(\E^{(r)}(\l), \E^{(r)}(\l) \la i \ra) = 0$ if $i < 0$ while $\End(\E^{(r)}(\l)) = \k \cdot I$. 
\end{enumerate}

Note that all functors appearing in the definition above can be obtained from the functors $\E(\l)$ by composition, taking direct summands and by taking (left or right) adjoints. 

Although we have categories $\D(\lambda)$ corresponding to weights $-N \le \lambda \le N$ the $\E$s and $\F$s jump by an even amount from an odd weight to odd weight or from an even weight to an even weight. So we can separate our analysis into studying the odd and even weights. It will therefore often be  convenient to assume that $\D(-N+1), \D(-N+3), \dots, \D(N-3), \D(N-1)$ are empty. 

\begin{Remark}
A strong categorical $\sl_2 $ action is the same thing as an integrable, graded representation of Rouquier's 2-category in the 2-category of $\k$-linear categories \cite{R}, along with the above extra condition on $\End(\E^{(r)})$. This follows immediately if one compares his definition to ours. 
\end{Remark}

\subsection{Geometric categorical $\sl_2$ action} \label{se:defgeom}

\subsubsection{A few preliminaries}

If $X$ is a variety we denote by $D(X)$ the bounded derived category of coherent sheaves on $X$. An object $\sP \in D(X \times Y)$ whose support is proper over $Y$ induces a Fourier-Mukai (FM) functor $\Phi_{\sP}: D(X) \rightarrow D(Y)$ via $(\cdot) \mapsto \pi_{2*}(\pi_1^* (\cdot) \otimes \sP)$ (where every operation is derived). One says that $\sP$ is the FM kernel which induces $\Phi_{\sP}$. The right and left adjoints $\Phi_{\sP}^R$ and $\Phi_{\sP}^L$ are induced by $\sP_R := \sP^\vee \otimes \pi_2^* \omega_X [\dim(X)]$ and $\sP_L := \sP^\vee \otimes \pi_1^* \omega_Y [\dim(Y)]$ respectively. 

If $\sQ \in D(Y \times Z)$ then $\Phi_{\sQ}  \Phi_{\sP} \cong \Phi_{\sQ * \sP}: D(X) \rightarrow D(Y)$ where $\sQ * \sP = \pi_{13*}(\pi_{12}^* \sP \otimes \pi_{23}^* \sQ)$ is the convolution product (see also \cite{ck1} section 3.1).

If $X$ carries a $\k^\times$ action then we can also consider the bounded derived category of $\k^\times$-equivariant coherent sheaves on $X$. On $X$ the sheaf $\O_X\{i\}$ denotes the structure sheaf shifted with respect to the $\k^\times$ action so that if $f \in \O_X(U)$ is a local function then viewed as a section $f' \in \O_X\{i\}(U)$ we have $t \cdot f' = t^{-i}(t \cdot f)$. We will denote by $\{i\}$ the operation of tensoring with $\O_X\{i\}$. In this paper we assume that any variety $X$ carries a $\k^\times$ action and we will denote by $D(X)$ the bounded derived category of $\k^\times$-equivariant coherent sheaves on $X$.

\subsubsection{Definition}

Once again we fix a base field $\k$. A {\bf geometric categorical $\sl_2$ action} consists of the following data.

\begin{enumerate}
\item A sequence of smooth varieties $Y(-N), Y(-N+1), \dots, Y(N-1), Y(N)$ over $\k$ (equipped with $\k^\times$-actions)
\item Fourier-Mukai kernels
$$\sE^{(r)}(\l) \in  D(Y(\l-r) \times Y(\l+r)) \text{ and } \sF^{(r)}(\l) \in  D(Y(\l+r) \times Y(\l-r)).$$
(which are $\k^\times$-equivariant). 
We will usually write $\sE(\l)$ for $\sE^{(1)}(\l)$ and $\sF(\l)$ for $\sF^{(1)}(\l)$ while one should think of $\sE^{(0)}(\l)$ and $\sF^{(0)}(\l)$ as $\O_\Delta$.
\item For each $Y(\l)$ a flat deformation $\tY(\l) \rightarrow \bA^1_\k$ carrying a $\k^\times$-action which maps fibres to fibres and acts on the base via $x \mapsto t^2 x$ (where $t \in \k^\times$). We call this a compatible $\k^\times$-action. 
\end{enumerate}

\begin{Remark}
Strictly speaking we only need a first order deformation $\tY(\lambda) \rightarrow \Spec(\k[x]/x^2)$ but in all our examples we obtain such a first order deformation from a natural deformation over $\bA^1_\k$. However, we could replace $\bA^1_\k$ by $\Spec(\k[x]/x^2)$ in the rest of paper and very little would change. 
\end{Remark}

On this data we impose the following additional conditions. We always work $\k^\times$ equivariantly. 

\begin{enumerate}
\item Each (graded piece of the) $\Hom$ space between two objects in $D(Y(\l))$ is finite dimensional. In particular, this means that $\End(\O_{Y(\l)}) = \k \cdot I$. 
\item 
$\sE^{(r)}(\l)$ and $\sF^{(r)}(\l)$ are left and right adjoints of each other up to shift. More precisely
\begin{enumerate}
\item $\sE^{(r)}(\l)_R = \sF^{(r)}(\l)[r\l]\{-r\l\}$
\item $\sE^{(r)}(\l)_L = \sF^{(r)}(\l)[-r\l]\{r\l\}$.
\end{enumerate}
\item 
At the level of cohomology of complexes we have
$$\H^*(\sE(\l+r) * \sE^{(r)}(\l-1)) \cong \sE^{(r+1)}(\l) \otimes_{\k} H^\star(\p^{r}).$$

\item 
If $ \l \le 0 $ then
\begin{equation*}
\sF(\l+1) * \sE(\l+1) \cong \sE(\l-1) * \sF(\l-1) \oplus \sP
\end{equation*}
where $\H^*(\sP) \cong \O_\Delta \otimes_\k H^\star(\p^{-\l-1})$. 

Similarly, if $\l \ge 0$ then
\begin{equation*}
\sE(\l-1) * \sF(\l-1) \cong \sF(\l+1) * \sE(\l+1) \oplus \sP'
\end{equation*}
where $\H^*(\sP') \cong \O_\Delta \otimes_\k H^\star(\p^{\l-1})$.

\item We have 
$$\H^*(i_{23*} \sE(\l+1) * i_{12*} \sE(\l-1)) \cong \sE^{(2)}(\l)[-1]\{1\} \oplus \sE^{(2)}(\l)[2]\{-3\}$$
where the $i_{12}$ and $i_{23}$ are the closed immersions
\begin{align*}
i_{12}: Y(\l-2) \times Y(\l) \rightarrow Y(\l-2) \times \tY(\l) \\
i_{23}: Y(\l) \times Y(\l+2) \rightarrow \tY(\l) \times Y(\l+2).
\end{align*}

\item If $\l \le 0$ and $k \ge 1$ then the image of $\supp(\sE^{(r)}(\l-r))$ under the projection to $Y(\l)$ is not contained in the image of $\supp(\sE^{(r+k)}(\l-r-k))$ also under the projection to $Y(\l)$. Similarly, if $\l \ge 0$ and $k \ge 1$ then the image of $\supp(\sE^{(r)}(\l+r))$ in $Y(\l)$ is not contained in the image of $\supp(\sE^{(r+k)}(\l+r+k))$. 
\item All $\sE^{(r)}$s and $\sF^{(r)}$s are sheaves (i.e. complexes supported in degree zero). 

\end{enumerate}

\begin{Remark}  \label{re:HT}
Having the conditions at the level of cohomology may seem strange but it is often much easier to prove that the cohomologies of two objects are the same than to prove that the objects are isomorphic. Moreover, there are natural examples where isomorphisms hold only at the level of cohomology. The moral is that also having deformations (with the properties described above) allows one to lift isomorphisms from the level of cohomology of objects to isomorphisms of objects. 

The idea of imposing the existence of a deformation was inspired by the work of Huybrechts-Thomas on $ \p^n $ objects.  In particular, in Proposition 1.4 of \cite{HT}, they show that under certain conditions, the deformation of a $ \p^n $ object is spherical.  When $ \l = -N+1 $, $ \sE(\l) $ is a relative $ \p^n $ object and we see (Proposition \ref{prop2:EF=FE}) that on the deformed varieties $i_* \sE(\l)$ satisfies a spherical-type condition. 

\end{Remark}

\subsection{The main result}
If we compare the definitions of strong categorical $ \sl_2$ action and geometric categorical $ \sl_2 $ action, we find the same functors (once we pass from $ \sE^{(r)}(\l)$ to $ \Phi_{\sE^{(r)}(\l)}$) which satisfy the same isomorphisms (compare points (iii), (iv) in the definitions above).  There are two main differences between the two definitions.  First, in the geometric version, the functors satisfy the isomorphisms only on the level of cohomology (an a priori weaker statement).  Second, in the geometric version, the isomorphisms are just abstract ismorphisms.  In the strong version, these isomorphisms come from specified morphisms, which themselves must satisfy nil affine Hecke relations.  However, despite these differences, the main result of this paper is that a geometric categorical $ \sl_2 $ gives a strong categorical $ \sl_2 $ action. 

\begin{Theorem}\label{th:main} Given a geometric categorical $\sl_2$ action set 
$$\D(\l) := D(Y(\l)) \text{ and } \E^{(r)}(\l) := \Phi_{\sE^{(r)}(\l)} \text{ and } \F^{(r)}(\l) := \Phi_{\sF^{(r)}(\l)}$$ 
where the shift in $\D(\l)$ is given by $\la r \ra = [r]\{-r\}$. Then there exist morphisms $\iota, \pi, \varepsilon, \eta, X, T$ giving a strong categorical $\sl_2$ action. Moreover, the choice of the $X$ and $T$ is parametrized by 
$$V(-1)^{tr} \times V(-2)^{tr} \times \k^\times \cong V(1)^{tr} \times V(2)^{tr} \times \k^\times$$
where $V(\l)^{tr} \subset \Ext^2(\O_\Delta, \O_\Delta)$ denotes the linear subspace of transient maps defined in section \ref{sec:reliii}. The choices of $\iota, \pi, \varepsilon$ and $\eta$ are unique up to scaling by $\k^\times$. 
\end{Theorem}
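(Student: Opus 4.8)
The plan is to construct the required morphisms $\iota,\pi,\varepsilon,\eta$ first and then $X$ and $T$, checking all the axioms of a strong categorical $\sl_2$ action along the way. For $\varepsilon$ and $\eta$: since condition (ii) of the geometric definition already provides that $\sE^{(r)}(\l)$ and $\sF^{(r)}(\l)$ are biadjoint up to shift, one obtains candidate units and counits of adjunction. The claim that these are unique up to $\k^\times$-scaling should follow from the finiteness and simplicity conditions (condition (i) and (vii) of the geometric definition, giving $\End(\O_\Delta)=\k$), together with standard arguments: $\Hom(\id, \F\E\la r\l\ra)$ is one-dimensional because it is computed by an $\Ext$ group between structure sheaves of supports, which a geometric (Koszul/normal-bundle) argument identifies with $H^0$ of something connected. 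For $\iota$ and $\pi$ the idea is the same: use condition (iii) at the level of cohomology together with the divided-power kernels $\sE^{(r)}(\l)$ to produce the split injection and split surjection onto the ``bottom'' and ``top'' summands of $\sE(\l+r)*\sE^{(r)}(\l-1)$, with uniqueness up to scalar again coming from a $\Hom$-space being one-dimensional.

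The heart of the argument, and the source of the parametrization in the statement, is the construction of $X$ and $T$. First I would construct $X(\l): \sE(\l) \to \sE(\l)[2]$ as the obstruction class to deforming the FM kernel $\sE(\l)$ over the product family $\tY(\l-1)\times_{\bA^1}\tY(\l+1)$; this is where Remark \ref{re:HT} and the Huybrechts--Thomas philosophy enter. Concretely, the deformations $\tY(\l)\to\bA^1$ give a square-zero extension, and the failure of $i_*\sE(\l)$ to extend is measured by a class in $\Ext^2(\sE(\l),\sE(\l))$; condition (v) of the geometric definition is exactly what is needed to control this $\Ext^2$ group and to see the deformed object satisfies the spherical-type condition alluded to in Proposition \ref{prop2:EF=FE}. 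The ambiguity in this construction, coming from the choice of trivialization of the deformation and lower-order correction terms, is what produces the factor $V(-1)^{tr}\times V(-2)^{tr}\times\k^\times$ of ``transient'' maps in $\Ext^2(\O_\Delta,\O_\Delta)$; I would isolate precisely which part of $\Ext^2$ is genuinely free (the transient subspace defined in section \ref{sec:reliii}) and which part is pinned down by the requirement that $X$ participate in the nil affine Hecke relations. Then $T(\l)$ on $\sE(\l+1)*\sE(\l-1)$ is built from $X$ together with $\iota,\pi$ and the cohomology-level isomorphism $\sE(\l+1)*\sE(\l-1)\cong\sE^{(2)}(\l)\la-1\ra\oplus\sE^{(2)}(\l)\la1\ra$ of condition (iii): roughly $T$ is the composite that projects to one summand and includes the other, twisted appropriately by a power of $X$, so that $T^2=0$ holds automatically and the Leibniz-type relation (vi)(c) relating $X$ and $T$ becomes a formal consequence.

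After $X$ and $T$ are in hand, the remaining work is to promote the cohomology-level isomorphisms of the geometric definition to honest isomorphisms of FM kernels (equivalently of functors) and to verify the precise forms of the isomorphisms demanded in the strong definition (points (iii), (iv), (v) there, including the explicit formulas for $\sigma$ and the sums $\sum (IX^j)\circ\eta$). This is exactly the step where formality is needed, which is why sections \ref{se:formality} and \ref{se:proofofnilHecke} are flagged in the introduction as carrying the proof: one shows that the relevant dg-algebra of endomorphisms is formal (using the deformation and the $\k^\times$-weights to kill higher $A_\infty$ operations by a weight/degree argument), and then a formal object with known cohomology over a formal algebra is determined by that cohomology, so the cohomology-level splittings lift. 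The braid relation (vi)(b) for $T$ on $\sE(\l+2)*\sE(\l)*\sE(\l-2)$ and the condition (vii) on $\End(\E^{(r)})=\k\cdot I$ and vanishing of negative-degree $\Hom$'s are then checked by reducing to computations in the (now understood) endomorphism algebras, using that everything is built from $\sE(\l)$ by composition and adjunction. Finally one records that a strong $\sl_2$ action is the same as a graded integrable 2-representation of Rouquier's 2-category plus the $\End(\E^{(r)})$ condition.

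The main obstacle I expect is the formality step together with the precise bookkeeping of the transient subspaces $V(\l)^{tr}$: one must show the endomorphism dg-algebras governing these constructions are formal, identify exactly which classes in $\Ext^2(\O_\Delta,\O_\Delta)$ are free choices versus forced, and check the nil affine Hecke relations (vi)(a)--(c) are satisfied for \emph{every} choice in the parametrized family (and only for those). Getting the homological algebra of obstruction theory to interact cleanly with the $\k^\times$-equivariant structure — so that the relevant $\Ext^2$ decomposes in the way the statement asserts and the isomorphism $V(-1)^{tr}\times V(-2)^{tr}\times\k^\times\cong V(1)^{tr}\times V(2)^{tr}\times\k^\times$ holds — is where the real difficulty lies; the adjunction and divided-power bookkeeping for $\iota,\pi,\varepsilon,\eta$, while lengthy, should be comparatively routine given the finiteness hypothesis.
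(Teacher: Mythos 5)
Your outline identifies the right ingredients — the deformation classes give $X$, $T$ is the off-diagonal part of the splitting $\sE(\l+1)*\sE(\l-1)\cong\sE^{(2)}(\l)\la-1\ra\oplus\sE^{(2)}(\l)\la1\ra$, formality of compositions is the key technical input, and the transient subspaces account for the ambiguity. But there are two substantive gaps that would prevent you from completing the argument.

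First, the formality mechanism you propose ($A_\infty$ formality of endomorphism dg-algebras, killed by $\k^\times$-weight considerations) is not what the paper does and is considerably harder to make precise than the paper's route. The paper never touches $A_\infty$ structures. Instead, Propositions \ref{prop:theta_k} and \ref{prop2:EF=FE} run a direct counting argument: the deformed cohomology computation in condition (v) forces the connecting morphism $\gamma$ of the triangle $\sE[1]\{-2\}\to i^*i_*\sE\to\sE$ to induce an isomorphism on all but one pair of cohomology summands of $\sE*\sE^{(r)}$, and one verifies this by a comparison of summand counts in $i_{23}^*i_{23*}\sE*\sE^{(r-1)}*\sE$, getting a contradiction if too few are killed. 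Then the explicit map $\iota\oplus(\gamma I)\circ\iota\oplus\dots\oplus(\gamma I)^r\iota$ is a quasi-isomorphism, hence an isomorphism, giving formality with no reference to higher operations. You would need to either reproduce this counting argument or actually carry out the $A_\infty$ degeneration, and the latter is nontrivial enough (one must control the full endomorphism dg-algebra of these convolutions, not just the diagonal) that it is not a drop-in replacement.

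Second, and more seriously, you have the difficulty of the nil affine Hecke relations inverted. You write that $T^2=0$ holds automatically and that the Leibniz relation (vi)(c) "becomes a formal consequence" — but in the paper (vi)(c) is the hard relation, and it is (vi)(a) and (vi)(b) that follow formally from it (section 5.3). Achieving (vi)(c) requires the recursive construction of section 5.2: write $X(\l)$ modulo transients as $(a(\l)\theta(\l+1),-a(\l-2)\theta(\l-1))$, fix $X(-N+1)$, and inductively choose $a(\l+3)$ so that the matrix entries of $I*X(\l+1)$ and $X(\l+3)*I$ satisfy $\hA=\hD'$, $\hA'=\hD$ (Proposition \ref{prop:defX}). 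This choice is shown to exist and be \emph{unique} (and nonzero) at each step, which is precisely why the parameter space is only $V(\pm1)^{tr}\times V(\pm2)^{tr}\times\k^\times$: the $\k^\times$ from rescaling $X(-N+1)$ and the transient corrections from the base cases $X(0)$ (resp.\ $X(-1)$). Your outline says "identify which part of $\Ext^2$ is free versus forced," but it does not give the mechanism that forces the $a(\l)$'s; without Proposition \ref{prop:defX} and the cohomology-level isomorphism argument behind it, the parametrization claim remains unproved. You would also need Lemma \ref{lem:Bnotzero} to know $B$ is invertible before you can even write down $T:=\begin{pmatrix}0&0\\-B^{-1}&0\end{pmatrix}$.
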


\begin{Remark} One may very well choose to ignore the $\k^\times$-action and nothing in the statement or proof of Theorem \ref{th:main} would change (except that we would have no $\{\cdot\}$ shift and $\la \cdot \ra = [\cdot]$). One reason to include the $\k^\times$ is because it occurs naturally in many of the examples we know and provides another grading which will be useful in future work. Also, there are examples (such as the one below) where the condition that $\End(\O_\Delta(\l))$ be finite dimensional fails if one doesn't work $\k^\times$-equivariantly. 
\end{Remark}

\section{The main example}\label{sec:mainexample}

Before proceeding with the proof of main Theorem \ref{th:main} we give an example of a geometric categorical $\sl_2$ action. We work over the base field $\k = \C$. The spaces involved will be cotangent bundles to Grassmannians. In \cite{ckl1} we gave an example of a geometric categorical $\sl_2$ action which is essentially a natural compactification of the one here. However, we prefer the one given here since in some ways it is simpler and more fundamental. 

\subsection{Spaces and functors}
Fix $ N > 0 $. For our spaces $Y(\l)$ we will take the total cotangent bundle to the Grassmannian $T^\star \bG(k,N)$ where $k = (N-\l)/2$. The $\C^\times$ will act naturally on the fibres of the bundle. These spaces have a particularly nice geometric description as 
$$T^\star \bG(k,N) \cong \{(X,V): X \in \End(\C^N), 0 \subset V \subset \C^N, \dim(V) = k \text{ and } \C^N \xrightarrow{X} V \xrightarrow{X} 0 \}$$
where $\End(\C^N)$ denotes the space of complex $N \times N$ matrices (the notation $ \C^N \xrightarrow{X} V \xrightarrow{X} 0 $ means that $ X(\C^n) \subset V $ and that $ X(V) = 0$). The action of $t \in \C^\times$ is by $X \mapsto t^2 X$. 

Forgetting $X$ corresponds to the projection $T^\star \bG(k,N) \rightarrow \bG(k,N)$ while forgetting $V$ gives a resolution of the variety 
$$ \{ X \in \End(\C^N) : X^2 =  0 \text{ and } \mbox{rank}(X) \le \min(k, N-k)\}. $$

On $Y(\l) = T^\star \bG(k,N)$ we have the tautological vector bundle $V$ as well as the quotient $\C^N/V$. 

To describe the kernels $\sE$ and $\sF$ we will need the correspondences 
$$ W^r(\l) \subset T^\star \bG(k+r/2, N) \times T^\star \bG(k-r/2, N) $$ 
defined by
\begin{align*}
W^r(\l) := \{ (X,V,V') : &X \in \End(\C^N), \dim(V) = k + \frac{r}{2}, \dim(V') = k - \frac{r}{2}, 0 \subset V' \subset V \subset \C^N  \\ 
& \C^N \xrightarrow{X} V' \text{ and } V \xrightarrow{X} 0 \}
\end{align*}
(here as before $ \lambda$ and $k $ are related by the equation $ k = (N - \l)/2 $).

There are two natural projections $\pi_1: (X,V,V') \mapsto (X,V)$ and $\pi_2: (X,V,V') \mapsto (X,V')$ from $W^r(\l)$ to $Y(\l-r)$ and $Y(\l+r)$ respectively. Together they give us an embedding
$$(\pi_1, \pi_2): W^r(\l) \subset Y(\l-r) \times Y(\l+r).$$
Notice that we also have a natural $\C^\times$ action on $W^r(\l)$ given by $X \mapsto t^2X$ so that both $\pi_1$ and $\pi_2$ are $\C^\times$-equivariant. 

On $W^r(\l)$ we have two natural tautological bundles, namely $V := \pi_1^*(V)$ and $V' := \pi_2^*(V)$ where the prime on the $V'$ indicates that the vector bundle is the pullback of the tautological bundle by the second projection. We also have natural inclusions 
$$0 \subset V' \subset V \subset \C^N \cong \O_{W^r(\l)}^{\oplus N}.$$

We now define the kernel $\sE^{(r)}(\l) \in  D(Y(\l-r) \times Y(\l+r))$ by 
\begin{equation*}
\sE^{(r)}(\l) := \O_{W^r(\l)} \otimes \det(\C^N/V')^{-r} \det(V)^r \{\frac{r(N-\l-r)}{2}\}.
\end{equation*}
Similarly, the kernel $\sF^{(r)}(\l) \in D(Y(\l+r) \times Y(\l-r))$ is defined by
\begin{equation*}
\sF^{(r)}(\l) := \O_{W^r(\l)} \otimes \det(V'/V)^{\l} \{\frac{r(N+\l-r)}{2}\}.
\end{equation*}
Notice that here $V' = \pi_2^*(V)$ is the pullback from the projection onto $Y(\l-r)$ since we now view $Y(\l-r)$ as being the second factor rather than the first. 

\begin{Remark}
Although $W^r(\l)$ is not proper the projections onto $Y(\l-r)$ and $Y(\l+r)$ are proper since the fibres are subvarieties of Grassmannians. Hence $\sE^{(r)}(\l)$ and $\sF^{(r)}(\l)$ induce FM transforms $\E^{(r)}(\l)$ and $\F^{(r)}(\l)$ between $D(Y(\l-r))$ and $D(Y(\l+r))$. 
\end{Remark}

\subsection{Deformations}

$Y(\l) = T^\star \bG(k,N)$ has a natural 2-parameter deformation over $\bA_\C^2 $, whose fibre at the point $ (x,y) $ is  given by
$$\{ (X,V): X \in \End(\C^N), 0 \subset V \subset \C^N, \dim(V) = k \text{ and } X|_{V} = x \cdot I, X|_{\C^N/V} = y \cdot I \}.$$
Notice that the fibre over $(x,y)=(0,0)$ recovers $T^\star \bG(k,N)$. This deformation restricted to the diagonal $x=y$ is actually trivial but if we take any other ray in $\bA_\C^2$ through the origin we get a non-trivial deformation of $T^\star \bG(k,N)$. Which ray we choose is not that important -- we choose the axis $y=0$ to obtain the deformation
$$\tY(\l) = \{ (X,V, x): x \in \C, X \in \End(\C^N), 0 \subset V \subset \C^N, \dim(V) = k \text{ and } X|_{V} = x \cdot I, X|_{\C^N/V} = 0 \}$$
where $\l = N-2k$. The $\C^\times$-action here acts, like before, by $X \mapsto t^2X$ and by $x \mapsto t^2x$. Thus we have a compatible $\C^\times$-action. 

\subsection{A geometric categorical $\sl_2$ action}

\begin{Theorem} \label{th:geomsl2}
The varieties $Y(\l) = T^* \bG(k,n) $ along with the deformations $ \tY(\l) $ and kernels $ \sE^{(r)}(\l)$, $\sF^{(r)}(\l) $, give a geometric categorical $\sl_2 $ action.
\end{Theorem}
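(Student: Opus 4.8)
The plan is to verify, one at a time, the seven numbered conditions in the definition of a geometric categorical $\sl_2$ action for the data $Y(\l)=T^\star\bG(k,N)$, $\sE^{(r)}(\l)$, $\sF^{(r)}(\l)$, $\tY(\l)$ given above. Conditions (i) and (vii) are essentially immediate: smoothness of $T^\star\bG(k,N)$ is classical, the flatness of $\tY(\l)\to\bA^1$ follows from the explicit equations (the family is a vector bundle over a base which is itself a deformation, or one checks the fibre dimension is constant), the finite-dimensionality of equivariant $\Hom$ spaces follows because each $Y(\l)$ carries a $\C^\times$-action contracting to the compact Grassmannian zero section, and the sheaves $\sE^{(r)},\sF^{(r)}$ are by definition line bundles on the correspondences $W^r(\l)$, hence supported in cohomological degree zero. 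Condition (ii), the adjunction up to shift, is a Grothendieck--Serre duality computation: one uses $\sP_R=\sP^\vee\otimes\pi_2^\star\omega\,[\dim]$, computes the canonical bundles of $Y(\l-r)$, $Y(\l+r)$ and the normal/relative dualizing sheaf of $W^r(\l)$ inside the product, and checks that the line-bundle twists $\det(\C^N/V')^{-r}\det(V)^r$ versus $\det(V'/V)^\l$ together with the $\{\cdot\}$ shifts are exactly what is needed to match $\sE^{(r)}(\l)_R$ with $\sF^{(r)}(\l)[r\l]\{-r\l\}$, and similarly on the left; the key input is that $W^r(\l)$ is smooth (or at least Gorenstein) of the expected dimension with a controlled dualizing sheaf.

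The substantive analytic content is in conditions (iii), (iv), (v): the categorified $\sl_2$ relations at the level of cohomology. My approach here is to reduce all of them to explicit fibre-product computations of the correspondences. For (iii) one computes the convolution $\sE(\l+r)*\sE^{(r)}(\l-1)$ by analyzing the fibre product $W^1(\l+r)\times_{Y(\l+r-1)}W^r(\l-1)$; this fibre product maps to the ``iterated flag'' correspondence $\{(X,V,V')\}$ with $\dim V=k+\frac{r+1}2$, $\dim V'=k-\frac{r+1}2$, and the fibres of this map are projective spaces $\p^r$ (parametrizing an intermediate plane), so that $R\pi_\star$ of the structure sheaf, twisted appropriately, produces the factor $\otimes_\k H^\star(\p^r)$; one must be careful that the fibre product is of expected dimension (a dimension count on $W^r$) so that $\H^*$ of the convolution is computed by pushforward of an actual sheaf, and track the line-bundle twists. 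For (v), the variant living on the deformed spaces $\tY(\l)$, the same analysis applies with $\p^r$ replaced by $\p^1$, but now the intermediate-plane fibration over the diagonal interacts with the deformation parameter $x$; this is where the Huybrechts--Thomas-style phenomenon enters and I expect a short but delicate local computation. For (iv), the genuine $[E,F]=H$ relation, one computes both $\sF(\l+1)*\sE(\l+1)$ and $\sE(\l-1)*\sF(\l-1)$ via their respective fibre products; the first is $W^1\times_{Y}W^1$ with the two $V'$'s nested \emph{inside} a common $V$, the second with a common $V'$ \emph{inside} two $V$'s; comparing these two fibre products geometrically (they differ by the locus where the relevant planes coincide) yields the direct-sum decomposition, with the ``extra'' summand $\sP$ supported on the diagonal and $\H^*(\sP)\cong\O_\Delta\otimes_\k H^\star(\p^{-\l-1})$ coming from the excess intersection there — this excess is exactly the cohomology of a projective space of the expected dimension. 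The sign convention $\l\le 0$ versus $\l\ge 0$ governs which of the two fibre products is the ``larger'' one.

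Condition (vi), the support non-containment, I would handle by a direct geometric observation: the image of $\supp(\sE^{(r)}(\l-r))$ in $Y(\l)$ is the locus of pairs $(X,V)$ with $\dim V=k$ admitting an intermediate plane making $X$ factor as required, which is cut out by a rank condition on $X$; as $r$ increases this rank condition becomes strictly stronger, so a generic point of the smaller-$r$ locus is not in the larger-$r$ locus, giving the required non-containment — one just needs to exhibit one such point, e.g. an $X$ of maximal allowed rank. I expect the main obstacle to be condition (iii) (and its deformed analogue (v)): controlling the fibre product $W^1\times_Y W^r$ precisely enough to know it has the expected dimension and that the derived pushforward is concentrated in the right degrees, and then matching all the line-bundle twists and $\{\cdot\}$-shifts so that the isomorphism is with $\sE^{(r+1)}(\l)\otimes H^\star(\p^r)$ on the nose rather than up to an unidentified twist. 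Fortunately, as the excerpt notes, the companion paper \cite{ckl1} establishes precisely these kernel identities for a natural compactification of this situation, so in practice one can either cite \cite{ckl1} or transcribe its computations to the (simpler) non-compact cotangent-bundle setting.
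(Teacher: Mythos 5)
Your plan is a viable route, but it is not the one the paper actually takes. You propose to verify the categorified $\sl_2$ relations by directly analyzing the fibre products of the correspondences $W^r(\l)$ over the cotangent bundles (together with a Serre-duality computation for the adjunctions and a local analysis near the zero section for the deformed relation). The paper explicitly acknowledges that this ``repeat the proof of \cite{ckl1} word by word, replacing $Y(k,l)$ by $T^\star\bG(k,N)$'' route works, and then deliberately avoids it as tedious. Instead, the paper proceeds formally: it identifies $T^\star\bG(k,N)$ with an explicit open subvariety $U(k,l)$ of the compactified space $Y(k,l)$ from \cite{ckl1} (Lemma \ref{lem:U(k,l)}, via the lattice-model description), observes that the kernels and deformations from \cite{ckl1} restrict to the ones defined here, and proves a general base-change lemma (Lemma \ref{lem:inducerelation}) showing that convolution of FM kernels commutes with restriction to open subvarieties provided the kernels satisfy the compatibility $(j_1\times 1)^* F_{12}\cong (1\times j_2)_* F_{12}'$. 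With that lemma in hand, conditions (ii)--(v) for the cotangent bundles are deduced in a few lines from the corresponding conditions already established over $Y(k,l)$ in \cite{ckl1}; only the support condition (vi) and the finite-dimensionality condition (i) are checked directly, and these directly-checked parts closely match your sketch (the rank-of-$X$ argument for (vi); the $\C^\times$-contraction to the compact Grassmannian for (i)). The trade-off is exactly the one you flag at the end of your proposal: your approach is more self-contained but essentially reproduces the geometric computations of \cite{ckl1}, whereas the paper's approach isolates a small formal lemma that makes the transfer automatic. In particular, the step you yourself identify as ``short but delicate'' --- the deformed relation (v) --- is precisely where the formal reduction saves the most effort, since there is no need to redo the Huybrechts--Thomas-style analysis on the deformed correspondences: it simply restricts from the compactification. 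If you wanted to flesh out your blind plan into a complete argument, the path of least resistance would be to prove the restriction lemma and invoke \cite{ckl1}, as the paper does, rather than recomputing the convolutions from scratch.
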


Recall that in \cite{ckl1} we constructed a geometric categorical $\sl_2$ action on certain spaces $Y(k,l)$. These varieties are compactifications of $T^* \bG(k,N)$ when $k+l=N$. One way to prove Theorem \ref{th:geomsl2} is to repeat the proof in \cite{ckl1} word by word replacing $Y(k,l)$ by $T^* \bG(k,N)$ at each step. The geometry is the same since we just restrict to open subsets. 

Alternatively, one can show that the geometric categorical $\sl_2$ action from \cite{ckl1} formally implies Theorem \ref{th:geomsl2}. We choose this approach because repeating the argument in \cite{ckl1} is a bit tedious and repetitive. We think it is more instructive to spell out the relationship between these two categorical $\sl_2$ actions and see how the one here follows directly from \cite{ckl1}. 

\subsection{Relation to categorification of skew Howe duality}

In \cite{ckl1} we constructed a geometric categorical $\sl_2$ action on varieties which compactified the above cotangent bundles. We now explain how that categorification is related to the one above. 

In \cite{ckl1} we fixed integers $m,N$ and defined varieties $Y(k,l)$ and functors $\sE^{(r)}(k,l)$ where $k+l=N$. However, only the case when $m=N$ is related to the example above. We now recall these varieties and functors when $m=N$. 

We define
\begin{equation*}
Y(k,l) := \{ \C^N \otimes \C[[z]] = L_0 \subset L_1 \subset L_2 \subset \C^N \otimes \C((z)) : z L_i \subset L_{i-1}, \dim(L_1/L_0) = k, \dim(L_2/L_1) = l \}
\end{equation*}
so that $D(Y(k,l))$ will correspond to the weight space of weight $\l = l-k$. The $\C^\times$-action on $Y(k,l)$ is induced by $t \cdot z^k = t^{2k}z^k$ acting on $\C((z))$. 

Next we define correspondences 
\begin{equation*}
W^r(k,l) := \{ (L_\bullet, L'_\bullet) : L_1 \subset L'_1, L_2 = L'_2 \} \subset Y(k,l) \times Y(k+r,l-r)
\end{equation*}
followed by kernels
\begin{equation*}
\sE^{(r)}(k,l) := \O_{W^r(k,l)} \otimes \det(L_2/L'_1)^{-r} \det(L_1/L_0)^r \{rk\} \in D(Y(k+r,l-r) \times Y(k,l))
\end{equation*}
and
\begin{equation*}
\sF^{(r)}(k,l) := \O_{W^r(k,l)} \otimes \det(L'_1/L_1)^{l-r-k} \{r(l-r)\} \in D(Y(k,l) \times Y(k+r,l-r))
\end{equation*}
where (abusing notation a little) $L_i$ denotes the tautological bundle on $Y(k,l)$ whose fibre over the point $(L_0 \subset L_1 \subset L_2)$ is $L_i$. As usual, one can check that everything here is $\C^\times$-equivariant. 

Since $z^2 L_2 \subset L_0$ this means that $\C^N \otimes \C[[z]] \subset L_2 \subset \C^N \otimes z^{-2} \C[[z]]$.  Hence the $ N $ dimensional space $ L_2/L_0 $ is a subspace of the $ 2N $ dimensional vector space $\C^N \otimes z^{-2}\C[[z]]/C[[z]]$. Define the linear projection $P: \C^N \otimes z^{-2}\C[[z]]/C[[z]] \rightarrow \C^N $ by $P(v \otimes z^{-1}) = v $ and $P(v \otimes z^{-2}) = 0$.  Now consider the open subvariety
$$U(k,l) := \{L_\bullet: P(L_2) = \C^N \} \subset Y(k,l).$$

The following result is due to \cite[Theorem 5.3]{MVy} and \cite[Lemma 2.3.1]{Ngo}
\begin{Lemma}\label{lem:U(k,l)} $U(k,l) \cong T^\star \bG(k,N)$ via the isomorphism 
$$(L_0 \subset L_1 \subset L_2) \mapsto (P|_{L_2} \circ z \circ P^{-1}|_{L_2}, P(L_1/L_0)) = (X,V).$$
Moreover, this isomorphism is $\C^\times$-equivariant with respect to the $\C^\times$-actions on $Y(k,l)$ and $T^\star \bG(k,N)$ defined above. 
\end{Lemma}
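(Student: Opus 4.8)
The plan is to write down the inverse of the stated map explicitly and to check that the two assignments are mutually inverse morphisms of varieties respecting the $\C^\times$-actions; this is essentially the computation of \cite[Theorem 5.3]{MVy} and \cite[Lemma 2.3.1]{Ngo}, which I would recall in the present notation.

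First I would record the elementary facts that make the stated map well defined. Since $z^2 L_2 \subset z L_1 \subset L_0$, one has $L_2 \subset \C^N \otimes z^{-2}\C[[z]]$, so $L_2/L_0$ is an $N$-dimensional subspace of the $2N$-dimensional space $\C^N \otimes z^{-2}\C[[z]]/\C[[z]]$; on $U(k,l)$ the restriction $P|_{L_2/L_0}$ is surjective onto $\C^N$ by definition, hence an isomorphism by dimension count, and this is what makes $X := P|_{L_2} \circ z \circ P^{-1}|_{L_2} \in \End(\C^N)$ meaningful (multiplication by $z$ descends to $L_2/L_0$ because $zL_2 \subset L_1 \subset L_2$ and $zL_0 \subset L_0$). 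Likewise $zL_1 \subset L_0$ forces $L_1/L_0 \subset \C^N \otimes z^{-1}\C[[z]]/\C[[z]]$, where $P$ is injective, so $V := P(L_1/L_0)$ has dimension $\dim(L_1/L_0) = k$. Then $X^2 = P z^2 P^{-1} = 0$ because $z^2 L_2 \subset L_0$; $X(\C^N) = P(z(L_2/L_0)) \subseteq P(L_1/L_0) = V$ because $zL_2 \subset L_1$; and $X(V) = P(z(L_1/L_0)) = 0$ because $zL_1 \subset L_0$. Thus $(X,V) \in T^\star\bG(k,N)$.

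Next I would construct the inverse. Given $(X,V)$, set $L_0 := \C^N \otimes \C[[z]]$, let $L_2$ be the $\C[[z]]$-submodule of $\C^N \otimes \C((z))$ generated by $L_0$ together with the vectors $w \otimes z^{-1} + Xw \otimes z^{-2}$ for $w \in \C^N$, and let $L_1$ be the submodule generated by $L_0$ and these vectors for $w \in V$. Using $X^2 = 0$, $X(\C^N) \subset V$ and $X(V) = 0$ one checks that $L_1, L_2$ are lattices with $zL_1 \subset L_0$, $zL_2 \subset L_1$, $\dim(L_1/L_0) = \dim V = k$, $\dim(L_2/L_1) = N - k = l$, and $P(L_2) = \C^N$, so that $(L_0 \subset L_1 \subset L_2)$ lies in $U(k,l)$; unwinding the definitions shows this is inverse to the map in the statement. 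The computations that matter are $z(w \otimes z^{-1} + Xw \otimes z^{-2}) \equiv Xw \otimes z^{-1} \pmod{L_0}$ together with the observation that $Xw \otimes z^{-1} = (Xw) \otimes z^{-1} + X(Xw) \otimes z^{-2}$ is again one of the chosen generators (since $Xw \in V \subset \C^N$), which yields closedness of $L_2$ under $z$ and the inclusion $zL_2 \subset L_1$; while for $w \in V$ one has $Xw = 0$, so $z(w \otimes z^{-1}) = w \otimes 1 \in L_0$, giving $zL_1 \subset L_0$.

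Finally, both maps are given by algebraic formulas valid in families over the base — the forward one out of $P|_{L_2}$, its inverse, and multiplication by $z$, the backward one out of the explicit generators above — so one obtains an isomorphism of varieties and not merely a bijection on $\C$-points; and $\C^\times$-equivariance follows by tracking the $z$-grading, since both actions amount to a uniform rescaling of $z$ (equivalently, of $X$) and the description of the isomorphism via the generators $w \otimes z^{-1} + Xw \otimes z^{-2}$ makes this transparent. The step deserving the most care is checking that the $L_1 \subset L_2$ built from $(X,V)$ satisfy all of the defining conditions simultaneously — in particular that $L_2$ is genuinely a $\C[[z]]$-submodule (closedness under $z$, which is exactly where $X^2 = 0$ and $X(\C^N) \subset V$ enter) and that the colengths come out to $k$ and $l$ — together with the bookkeeping needed to promote the pointwise bijection to an isomorphism of varieties; this is why we appeal to \cite{MVy} and \cite{Ngo} for the full argument.
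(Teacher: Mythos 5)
Your proposal is correct and follows essentially the same approach as the paper, which gives only a brief sketch: identify $P|_{L_2}$ as an isomorphism, observe that $z$ transports to $X$ and $L_1/L_0$ to $V$, and appeal to \cite{MVy} and \cite{Ngo} for the full isomorphism of varieties. Your explicit construction of the inverse via the generators $w \otimes z^{-1} + Xw \otimes z^{-2}$ and the verification that $X^2=0$, $X(\C^N)\subset V$, $X(V)=0$ are exactly the conditions needed for closedness under $z$ is a more detailed version of the same argument.
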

\begin{proof}
We give a sketch. The definition of $U(k,l)$ implies that if $ L_\bullet \in U(k,l) $, then $ P $ gives an isomorphism between $ L_2 $ and $ \C^N $.  So $P$ takes $L_1$ to a $k$-dimensional subspace $V \subset \C^N$ and $0 \xleftarrow{z} L_1 \xleftarrow{z} L_2$ induces the map $0 \xleftarrow{X} V \xleftarrow{X} \C^N$ where $X = P|_{L_2} \circ z \circ P^{-1}|_{L_2}$. The fact that this isomorphism is $\C^\times$-equivariant follows since the $\C^\times$-actions are given by $X \mapsto t^2 X$ and $z \mapsto t^2 z$. 
\end{proof}

Now in \cite{ckl1} we also had deformations 
$$\tY(k,l) := \{ L_\bullet: z|_{L_1/L_0} = x \cdot I, z|_{L_2/L_1} = 0, \dim(L_1/L_0) = k, \dim(L_2/L_1) = l \}.$$
These were also equipped with $\C^\times$-actions induced by $t \cdot z^k = t^{2k} z^k$ and $t \cdot x = t^2 x$. From \cite[Theorem 5.3]{MVy}, we obtain the following result:

\begin{Lemma}
The embedding $ T^\star \bG(k,N) \cong U(k,l) \rightarrow Y(k,l) $ of Lemma \ref{lem:U(k,l)} extends to an embedding $ \tY(\l) \rightarrow \tY(k,l) $ which is compatible with the projections to $ \bA_\C^1 $ and the $\C^\times$-actions given above.
\end{Lemma}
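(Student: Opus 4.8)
The plan is to realize the extension as an open immersion obtained by deforming the inverse of the isomorphism of Lemma~\ref{lem:U(k,l)}; the substantive input is the family version of \cite[Theorem 5.3]{MVy}, and the remaining work is to transcribe it into the present lattice model and to track the two auxiliary structures (the $\bA^1_\C$-projection and the $\C^\times$-action). First I would make the inverse of Lemma~\ref{lem:U(k,l)} explicit. Given $(X,V) \in T^\star\bG(k,N)$, so that $X(\C^N) \subset V$ and $X(V) = 0$ (hence $X^2 = 0$), set $L_0 = \C^N \otimes \C[[z]]$ and
\begin{equation*}
L_1 := L_0 + z^{-1} V, \qquad L_2 := L_0 + \{\, z^{-1} w + z^{-2} X w \ : \ w \in \C^N \,\}.
\end{equation*}
A direct check shows $L_0 \subset L_1 \subset L_2$ is a point of $Y(k,l)$: the inclusion $L_1 \subset L_2$ uses $X(V)=0$ and $zL_2 \subset L_1$ uses $X(\C^N) \subset V$ (whence also the $\C[[z]]$-stability of $L_2$); moreover $P(L_2) = \C^N$, so the point lies in $U(k,l)$, and it maps to $(X,V)$ under the isomorphism of Lemma~\ref{lem:U(k,l)}. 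Thus this assignment is a morphism, inverse to that of Lemma~\ref{lem:U(k,l)}, and it identifies $T^\star\bG(k,N)$ with the \emph{open} subvariety $U(k,l) = \{\, L_\bullet : P(L_2) = \C^N \,\} \subset Y(k,l)$.

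Next I would carry this over to the deformed spaces. For $(X,V,x) \in \tY(\l)$ one has $X|_V = x \cdot I$ and $X|_{\C^N/V} = 0$, so $X(\C^N) \subset V$ and $X^2 = x X$. In $\tY(k,l)$ the relevant lattice model is the Beilinson--Drinfeld one, in which the operator governing $L_1/L_0$ and $L_2/L_1$ has the two ``eigenvalues'' $0$ and $x$ (colliding to the nilpotent operator $z$ of $Y(k,l)$ as $x \to 0$), and one writes down the chain $L_0 \subset L_1 \subset L_2$ built from $X$ and $x$ by the formulas analogous to those above but with $z^{-2}Xw$ replaced by the term that forces $z|_{L_1/L_0} = x \cdot I$. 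One then verifies: (i) $L_\bullet$ satisfies the defining relations of $\tY(k,l)$; (ii) $(X,V,x) \mapsto L_\bullet$ is a morphism $\tY(\l) \to \tY(k,l)$; (iii) over $x = 0$ it reduces to the open immersion of the previous paragraph; and (iv) its image is the open locus $\tU(k,l) \subset \tY(k,l)$ where $P(L_2) = \C^N$, with $\tY(\l) \xrightarrow{\sim} \tU(k,l)$. Point (iv) is precisely the Beilinson--Drinfeld family isomorphism of \cite[Theorem 5.3]{MVy}, so in practice one quotes that theorem and checks (i)--(iii) by inspecting the two types of fibres, $x = 0$ and $x \neq 0$ separately.

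Finally, compatibility with the maps to $\bA^1_\C$ is immediate: on each side the structure map records $x$, and the extended morphism preserves $x$. Compatibility with the $\C^\times$-actions is the same homogeneity argument as in the proof of Lemma~\ref{lem:U(k,l)}: the formulas defining $L_1$ and $L_2$ are homogeneous of the correct weights, since $z$ and $X$ carry the same $\C^\times$-weight and the deformation parameters on the two sides, each of $\C^\times$-weight $2$, are identified. I expect the only real obstacle to be step (iii)--(iv): pinning down the correct deformed formula for $L_1$ so that $z$ acts on $L_1/L_0$ as exactly $x \cdot I$ rather than as $0$ (the naive ansatz $L_1 = L_0 + z^{-1}V$ gives the latter), equivalently matching the conventions of \cite{MVy} in this concrete lattice model and confirming that the resulting map is genuinely an open immersion rather than merely a bijection on points. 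Everything else is bookkeeping.
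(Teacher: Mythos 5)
The paper offers no proof of this lemma at all: immediately before the statement it writes ``From \cite[Theorem 5.3]{MVy}, we obtain the following result'' and moves on, so your proposal is already more detailed than the paper's treatment and it follows the same route (make the isomorphism of Lemma~\ref{lem:U(k,l)} explicit and invoke the Beilinson--Drinfeld family version of \cite[Theorem 5.3]{MVy}). Your explicit undeformed formulas $L_1 = L_0 + z^{-1}V$ and $L_2 = L_0 + \{\, z^{-1}w + z^{-2}Xw : w \in \C^N \,\}$ are correct, and the containment, dimension, and $\C^\times$-homogeneity checks you sketch are sound.

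The one place where your proposal underestimates the difficulty is exactly the one you flag yourself. The obstacle to ``pinning down the correct deformed formula for $L_1$'' is not a matter of replacing $z^{-1}V$ by a cleverer subspace: if $L_0 = \C^N \otimes \C[[z]]$ is kept fixed and $L_1 \supset L_0$ is a $\C[[z]]$-submodule of $\C^N \otimes \C((z))$ with $\dim_\C L_1/L_0 = k < \infty$, then $L_1/L_0$ is a finite-length $\C[[z]]$-module supported at $z=0$, so $z|_{L_1/L_0}$ is automatically nilpotent and the required identity $z|_{L_1/L_0} = x \cdot I$ is unsatisfiable for any $x \neq 0$. No ansatz written inside the fixed ambient space $\C^N \otimes \C((z))$ can work. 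What $\tY(k,l)$ actually is, in the Beilinson--Drinfeld picture you allude to, is a space of lattice chains over the formal completion of $\bA^1_\C$ at the \emph{pair of points} $\{0,x\}$ --- i.e.\ living inside $\C^N \otimes \left( \C((z)) \times \C((z-x)) \right)$ when $x \neq 0$ --- with $L_1$ jumping at the point $x$ and $L_2$ at the point $0$; the two marked points collide as $x \to 0$ to recover $Y(k,l)$. You gesture at this (``two `eigenvalues' $0$ and $x$''), but the fix is a change of ambient space, not a change of formula, which is presumably why both this paper and \cite{ckl1} defer to \cite[Theorem 5.3]{MVy} rather than write out an explicit map.
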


\subsection{Proof of Theorem \ref{th:geomsl2}}

The idea is to show that the categorical $\sl_2$ relations for the $Y(k,l)$ varieties induce the same relations for our open subvarieties $T^\star \bG(k,N)$. 

To do this we begin with the following observation. Denote by $j: Y(\l) = T^\star \bG(k,N) \rightarrow Y(k,l)$ the natural open immersion. Then it is not difficult to see that 
$$(j \times j)^{-1} W^r(k,l) = W^r(\l) \subset T^\star \bG(k,N) \times T^\star \bG(k+r,N)$$
where $\l = N-2k-r$. Even better, we have 
$$ (j \times 1)^{-1} W^r(k,l) = W^r(\l) \subset T^\star \bG(k,N) \times Y(k+r, l-r) $$

Moreover, since $L_1$ and $L_1'$ on $W^r(k,l)$ correpond to $V$ and $V'$ on $W^r(\l)$ it is easy to check that
$$(j \times j)^* \sE^{(r)}(k,l) \cong \sE^{(r)}(\l) \text{ and } (j \times j)^* \sF^{(r)}(k,l) = \sF^{(r)}(\l)$$
and that
\begin{equation} \label{eq:j11j}
 (j \times 1)^* \sE^{(r)}(k,l) \cong (1 \times j)_* \sE^{(r)}(\l) \text{ and } (j \times 1)^* \sF^{(r)}(k,l) = (1 \times j)_* \sF^{(r)}(\l).
\end{equation}

We can now make use of the following lemma.
\begin{Lemma}\label{lem:inducerelation}
Suppose that $ Y_1, Y_2, Y_3 $ are smooth varieties and $ U_1, U_2, U_3 $ are open subvarieties.  Let $ j_a : U_a \rightarrow Y_a $ denote the natural open immersion.  Let $ F_{12} \in D(Y_1 \times Y_2), F_{23} \in D(Y_2 \times Y_3) $ denote objects on the products and let 
$$ F_{12}' := (j_1 \times j_2)^*(F_{12}) \in D(U_1 \times U_2) \text{ and } F_{23}' := (j_2 \times j_3)^*(F_{23}) \in D(U_2 \times U_3).$$ 
Suppose moreover that 
\begin{equation*}
 (j_1 \times 1)^* F_{12} \cong (1 \times j_2)_* F_{12}' \in D(U_1 \times Y_2) \text{ and } (1 \times j_3)^* F_{23} = (j_2 \times 1)_* F_{23}' \in D(Y_2 \times U_3).
\end{equation*}

Then $ F'_{23} * F'_{12} \cong (j_1 \times j_3)^*(F_{23} * F_{12}) $
\end{Lemma}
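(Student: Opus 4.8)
The plan is to expand both sides using the definition of the convolution product on $Y_1 \times Y_2 \times Y_3$ (resp. its open subvariety $U_1 \times U_2 \times U_3$), and to match them by a careful application of base change for the open immersion $j_2$ in the middle factor. Write $p_{12}, p_{23}, p_{13}$ for the projections off $Y_1 \times Y_2 \times Y_3$, and $q_{12}, q_{23}, q_{13}$ for the analogous projections off $U_1 \times U_2 \times U_3$. By definition $F_{23} * F_{12} = p_{13*}(p_{12}^* F_{12} \otimes p_{23}^* F_{23})$ and $F_{23}' * F_{12}' = q_{13*}(q_{12}^* F_{12}' \otimes q_{23}^* F_{23}')$, and I want to show the latter is $(j_1 \times j_3)^*$ of the former. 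The key geometric point is that $U_1 \times U_2 \times U_3$ is an open subvariety of $Y_1 \times Y_2 \times Y_3$, with open immersion $j_1 \times j_2 \times j_3$; restricting to the open subset where only the middle factor has been shrunk gives an intermediate space $U_1 \times Y_2 \times U_3$, with open immersion $k := 1 \times j_2 \times 1$ into it, and $U_1 \times U_2 \times U_3$ sits inside $U_1 \times Y_2 \times U_3$ as $(1 \times j_2 \times 1)^{-1}$ of itself composed the other way.

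First I would compute $(j_1 \times j_3)^*(F_{23} * F_{12})$ by flat base change along the fiber square relating $p_{13}$ (on $Y_1 \times Y_2 \times Y_3$) with its restriction $r_{13}: U_1 \times Y_2 \times U_3 \to U_1 \times U_3$; this is a base change along the open immersion $j_1 \times j_3$, so it is exact and commutes with the tensor product and with restriction, yielding
$$(j_1 \times j_3)^*(F_{23} * F_{12}) \cong r_{13*}\big( k^*(p_{12}^* F_{12}) \otimes k^*(p_{23}^* F_{23}) \big)$$
where $k = 1 \times j_2 \times 1$. Now $k^* p_{12}^* F_{12} = (p_{12} \circ k)^* F_{12}$, and $p_{12} \circ k$ factors as first projecting $U_1 \times Y_2 \times U_3 \to U_1 \times Y_2$ and then applying nothing — but wait, we want to bring in the hypothesis $(j_1 \times 1)^* F_{12} \cong (1 \times j_2)_* F_{12}'$. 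So instead I identify $p_{12} \circ k$ with the composite $U_1 \times Y_2 \times U_3 \to U_1 \times Y_2$ (projection), precompose the pullback along $U_1 \times Y_2 \hookleftarrow U_1 \times U_2$... Concretely: by the hypothesis, the object $k^*(p_{12}^* F_{12})$ on $U_1 \times Y_2 \times U_3$ equals the pullback of $(1 \times j_2)_* F_{12}'$ along the projection $U_1 \times Y_2 \times U_3 \to U_1 \times Y_2$, and since this projection is flat we may again use base change: pulling $(1 \times j_2)_* F_{12}'$ back is $(1 \times j_2 \times 1)_*$ of the pullback of $F_{12}'$ along $U_1 \times U_2 \times U_3 \to U_1 \times U_2$, i.e. $(1 \times j_2 \times 1)_* q_{12}^* F_{12}'$. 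Symmetrically $k^*(p_{23}^* F_{23}) \cong (1 \times j_2 \times 1)_* q_{23}^* F_{23}'$.

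Substituting these in, the right-hand side becomes $r_{13*}\big( (1 \times j_2 \times 1)_* q_{12}^* F_{12}' \otimes (1 \times j_2 \times 1)_* q_{23}^* F_{23}' \big)$. The final step is the projection-formula-type identity that for an open immersion $\ell := 1 \times j_2 \times 1$ one has $\ell_* A \otimes \ell_* B \cong \ell_*(A \otimes B)$ (true because $\ell$ is an open immersion, so $\ell_*$ is fully faithful with essential image the objects supported on the open set, and $\otimes$ of two such is again supported there); combined with $r_{13} \circ \ell = q_{13}$ this gives $r_{13*} \ell_* (q_{12}^* F_{12}' \otimes q_{23}^* F_{23}') = q_{13*}(q_{12}^* F_{12}' \otimes q_{23}^* F_{23}') = F_{23}' * F_{12}'$, as desired. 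The main obstacle I anticipate is not any single step but bookkeeping: one must be careful that the "$(1 \times j_2)_* F_{12}'$" appearing in the hypothesis is indeed the same object that shows up after the first base change (i.e. that the hypothesis is stated with respect to the correct ordering of factors in $D(U_1 \times Y_2)$ versus $D(Y_2 \times U_3)$), and that all the pushforwards along non-proper maps are legitimate — here they are, because $j_2$ and its products are open immersions (for which $\ast$-pushforward is always defined on the bounded derived category of the relevant open, as it is just extension by a colimit / the pushforward is right adjoint to the exact restriction), and because the only "honest" pushforward $p_{13*}$ is along a map that is proper on supports by the standing support-properness assumptions inherited from the objects $F_{12}, F_{23}$. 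Once the commuting square diagram is drawn correctly, the argument is a formal chase.
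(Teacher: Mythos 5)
Your proof is correct and follows essentially the same route as the paper's: flat base change along $j_1 \times j_3$ to pass to $U_1 \times Y_2 \times U_3$, substitution of the hypotheses, a second base change along the flat projections to bring in $(1 \times j_2 \times 1)_*$, and finally the fact that pushforward by the open immersion $1 \times j_2 \times 1$ commutes with tensor. Only a small notational slip: in the first base change the pullback on $U_1 \times Y_2 \times U_3$ should be along $j_1 \times 1 \times j_3$, not along $1 \times j_2 \times 1$ (which is the map from $U_1 \times U_2 \times U_3$ you need only in the later step); your subsequent computation makes clear you intend the correct map.
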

\begin{proof}
This follows by a direct calculation. We have
\begin{eqnarray*}
(j_1 \times j_3)^* (F_{23} * F_{12}) 
&\cong& (j_1 \times j_3)^* \pi_{13*} (\pi_{12}^* F_{12} \otimes \pi_{23}^* F_{23}) \\
&\cong& p_{13*} (j_1 \times 1 \times j_3)^* (\pi_{12}^* F_{12} \otimes \pi_{23}^* F_{23}) \\
&\cong& p_{13*} \left( (j_1 \times 1 \times j_3)^* \pi_{12}^* F_{12} \otimes (j_1 \times 1 \times j_3)^* \pi_{23}^* F_{23} \right) \\
&\cong& p_{13*} \left( p_{12}^* (j_1 \times 1)^* F_{12} \otimes p_{23}^* (1 \times j_3)^* F_{23} \right) \\
&\cong& p_{13*} \left( p_{12}^* (1 \times j_2)_* F_{12}' \otimes p_{23}^* (j_2 \times 1)_* F_{23}' \right) \\
&\cong& p_{13*} \left( (1 \times j_2 \times 1)_* {\pi'_{12}}^* F_{12}' \otimes (1 \times j_2 \times 1)_* {\pi'_{23}}^* F_{23}' \right) \\
&\cong& p_{13*} (1 \times j_2 \times 1)_* \left( {\pi'_{12}}^* F_{12}' \otimes {\pi'_{23}}^* F_{23}' \right) \\
&\cong& \pi'_{13*} ({\pi'_{12}}^* F_{12}' \otimes {\pi'_{23}}^* F_{23}' ) \\
&\cong& F_{23}' * F_{12}'
\end{eqnarray*}
where $p_{ab}$ is the projection from $U_1 \times Y_2 \times U_3$ onto the $a,b$ factor and ${\pi'_{ab}}$ is the projection from $U_1 \times U_2 \times U_3$ onto the $a,b$ factor. To get the 2nd and 6th isomorphisms we used commutativity of pushing and pulling in a flat base change. To get the 7th isomorphism we used that $1 \times j_2 \times 1$ is an open immersion so tensoring commutes with pushforward. 
\end{proof}

Now we can finish the proof of Theorem \ref{th:geomsl2}. We need to check that the $\sE(\l)$'s and $\sF(\l)$'s satisfy conditions (i) - (vii) for having a geometric categorical $\sl_2$ action. 
Conditions (ii) and (iii) follow immediately since we are just restricting to open subsets. Condition (vii) can be checked quite easily just like in \cite{ckl1} by noting that (if $\l \le 0$ and $k \ge 1$) then the image of $\supp(\sE^{(r)}(\l-r))$ contains points where the kernel of $X$ has dimension $r+(N+\l)/2$ while the image of $\supp(\sE^{(r)}(\l-r-k))$ is contained in the locus where the kernel of $X$ has dimension $\ge r+k+(N+\l)/2)$ -- so the former cannot be contained in the latter.

Next we check condition (iv). We apply Lemma \ref{lem:inducerelation} with $ U_1 := Y(\l-1-r), U_2 := Y(\l-1+r), U_3 := Y(\l+1+r), Y_1 := Y(k+r,l-r), Y_2 := Y(k,l), Y_3 := Y(k-1,l+1) $, where $\l = N-2k-r+1$. We choose $ F_{12} := \sE^{(r)}(k,l), F_{23} := \sE(k-1,l+1) $.  The main hypothesis of Lemma \ref{lem:inducerelation} follows from (\ref{eq:j11j}). From the conclusion of Lemma \ref{lem:inducerelation}, we deduce that 
\begin{equation*}
\sE(\l+r) * \sE^{(r)}(\l-1)  \cong (j_1 \times j_3)^*(\sE(k-1, l+1) * \sE^{(r)}(k,l)).
\end{equation*}
Applying $ \H^* $ to both sides, and using the fact that the underived pullback $(j_1 \times j_3)^*$ is exact, we obtain 
\begin{eqnarray*}
 \H^*(\sE(\l+r) * \sE^{(r)}(\l-1)) 
&\cong& (j_1 \times j_3)^*(\H^*(\sE(k-1, l+1) * \sE^{(r)}(k,l))) \\
&\cong& (j_1 \times j_3)^*(\sE^{(r+1)}(k-1,l+1) \otimes_\C H^\star(\p^r)) \\
&\cong& \sE^{(r+1)}(\l) \otimes_\C H^\star(\p^r).
 \end{eqnarray*}
Thus the first part of relation (iv) follows from the corresponding relation (iv) for the composition of the $\sE(k,l)$'s.  

To prove condition (v) we apply the Lemma with $ Y_1, Y_3, U_1, U_3 $ as above but with $ Y_2 = \tY(k,l) $ and $ U_2 = \tY(\l) $. The relation then follows as above. Condition (v) also follows by a similar argument. 

The final thing to check is condition (i): namely that $\Hom^i(A,B)$ is finite dimensional for any $A,B \in Y(\l)$. Since $Hom^i(A,B) \cong H^i(A^\vee \otimes B)$ it suffices to check $H^i(A)$ is finite dimensinoal for any $A$. By considering the corresponding spectral sequence we can even assume $A$ is a sheaf. 

Now let $\pi$ be the projection $T^* \bG(k,N) \rightarrow \bG(k,N)$. The fibres are $\bA^{k(N-k)}$ so that $\pi_*(A) = R^0 \pi_*(A)$ ({\it i.e.} there is no higher cohomology). Now the $\C^\times$ action acts naturally on the fibres. Since $H^0(\bA^{k(N-k)}, M)^{\C^\times}$ is finite dimensional for any $\C^\times$-equivariant coherent sheaf $M$ we have that $R^0 \pi_*(A)^{\C^\times}$ is a coherent sheaf. Hence $H^i(A) = H^i(R^0 \pi_*(A)^{\C^\times})$, is finite dimensional since $\bG(k,N)$ is proper. 

Notice that (i) would not hold if we were to ignore the $\C^\times$-action. 

\begin{Remark}
It follows immediately from Theorem \ref{th:geomsl2} that $U_q(\sl_2)$ acts on the Grothendieck group
$$K(N):=\bigoplus_{\l=-N}^{N} K(\D(\l))$$
where $ K(\D(\l)) $ is a $\C[q,q^{-1}] $ module with $ -q $ acting by $ \{1\}$.

The weight space $ K(\D(\l)) $ has dimension $ \dim H^*(T^\star(\bG(k,n)))) = \binom{N}{k} $ by the argument in Proposition 7.2 of \cite{ck2}.  Hence as a $ U_q(\sl_2) $ representation, $K(N)$ is isomorphic to the $N$th tensor power of the irreducible 2-dimensional representation.
\end{Remark}

\section{Obtaining formality from deformations $\tY(\l)$} \label{se:formality}

The rest of the paper is devoted to the proof of Theorem \ref{th:main}. We will assume throughout that we have a fixed geometric categorical $\k^\times$-equivariant $\sl_2$ action.

The most difficult part of the proof (by far) is to construct the $X(\l)$'s and $T(\l)$'s so that they satisfy the nil affine Hecke relations. So we first check all the other properties and leave the nil affine Hecke relations until the end. 

\subsection{Some deformation theory}

We begin with some general deformation theory. Our deformations will be over $\bA^1_\k$ although the arguments below are valid over more general one-dimensional bases. 

Suppose $\tY \rightarrow \bA^1_\k$ is a flat deformation of a variety $Y$ and denote by $i: Y \rightarrow \tY$ the inclusion of $Y$ as the fibre over $0 \in \bA^1_\k$. We assume, as above, that we have a compatible $\k^\times$-action on $\tY$ (i.e. it maps fibres to fibres and acts on the base $\bA^1_\k$ by $x \mapsto t^2 x$).  

Given an object $\sG \in D(Y)$  we get the standard exact triangle
$$\sG[1] \otimes N_{Y/\tY}^\vee \rightarrow i^* i_* \sG \rightarrow \sG$$
obtained via the natural adjunction maps. Now $N_{Y/\tY} \cong \O_Y \{2\}$ the connecting morphism gives an endomorphism $\alpha: \sG[-1]\{1\} \rightarrow \sG[1]\{-1\}$. Alternatively, $\alpha$ can be defined as the composition $\At(\sG) \cdot \kappa(i) \in \Ext^2(\sG,\sG)$ where $\At(\sG) \in \Ext^1(\sG,\sG \otimes \Omega_Y)$ is the Atiyah class of $\sG$ and $\kappa(i) \in \Ext^1(\Omega_Y, N_{Y/\tY})$ is the Kodaira-Spencer class (see the Appendix of \cite{HT} for a proof of the equivalence of these two definitions). From either definition it is apparent that $i: Y \rightarrow \tY$ only defines the map $\alpha$ up to a non-zero multiple because, though $N_{Y/\tY} \cong \O_Y \{2\}$, this isomorphism is not canonical. Nevertheless, regardless of the value of this non-zero multiple we will always have 
$$\Cone(\sG[-1]\{1\} \xrightarrow{\alpha} \sG[1]\{-1\}) \cong i^* i_* \sG \{1\}.$$

\begin{Lemma}\label{lem:j1} Given three spaces $Y_i$ ($i=1,2,3$) and deformation $\tY_2 \rightarrow \bA^1_\k$ of $Y_2$ denote by $i_{12}: Y_1 \times Y_2 \rightarrow Y_1 \times \tY_2$ and $i_{23}: Y_2 \times Y_3 \rightarrow \tY_2 \times Y_3$ the natural inclusions. Given $\sG_{12} \in D(Y_1 \times Y_2)$ and $\sG_{23} \in D(Y_2 \times Y_3)$ we have 
\begin{eqnarray*}
\sG_{23} * (i_{12}^* i_{12*} \sG_{12}) \cong (i_{23*} \sG_{23}) * (i_{12*} \sG_{12}) \cong (i_{23}^* i_{23*} \sG_{23}) * \sG_{12} 
\end{eqnarray*}
Everything still holds if we also have compatible $\k^\times$-actions. 
\end{Lemma}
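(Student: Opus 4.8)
The plan is to reduce everything to the basic fact that pushforward along the inclusion of a fibre commutes (up to the given identifications) with convolution, using that $i_{12}$ and $i_{23}$ are closed immersions of Cartesian type. First I would unravel $i_{12}^* i_{12*} \sG_{12}$: the composition $\sG_{23} * (i_{12}^* i_{12*} \sG_{12})$ is, by definition of convolution, $\pi_{13*}(\pi_{12}^*(i_{12}^* i_{12*}\sG_{12}) \otimes \pi_{23}^* \sG_{23})$ computed on $Y_1 \times Y_2 \times Y_3$. The key observation is that the square relating $Y_1 \times Y_2 \times Y_3$ to $Y_1 \times \tY_2 \times Y_3$ via the inclusion $\id \times i \times \id$ (with $i: Y_2 \to \tY_2$) is a flat (indeed transverse) base change of $i_{12}: Y_1 \times Y_2 \to Y_1 \times \tY_2$, so $\pi_{12}^* i_{12}^* i_{12*} \sG_{12} \cong (\id \times i \times \id)^* (\id \times i \times \id)_* \pi_{12}^* \sG_{12}$, and similarly $\pi_{23}^*\sG_{23}$ on $Y_1 \times \tY_2 \times Y_3$ pulls back from below. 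Then the projection formula for the closed immersion $\id \times i \times \id$ lets one pull the tensor product outside the pushforward, and commuting $\pi_{13*}$ with the fibre inclusion identifies the result with the convolution $(i_{23*}\sG_{23}) * (i_{12*}\sG_{12})$ computed on the big product $Y_1 \times \tY_2 \times Y_3$. This is exactly the style of the computation already carried out in the proof of Lemma \ref{lem:inducerelation}, just with open immersions replaced by the fibre closed immersion $i$ and flat/open base change replaced by the transverse base change along a Cartier divisor (pullback along $i$ is still exact, and everything is derived).

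Concretely, the steps in order are: (1) write out both sides as convolutions of kernels; (2) identify $\pi_{12}^* i_{12*}\sG_{12}$ with $(\id\times i\times\id)_* \pi_{12}^*\sG_{12}$ by flat base change along the projection, and symmetrically for the $\sG_{23}$ factor; (3) apply the projection formula for the closed immersion $\id\times i\times\id: Y_1\times Y_2\times Y_3 \hookrightarrow Y_1\times\tY_2\times Y_3$ to move the tensor product out through the pushforward; (4) commute $\pi_{13*}$ past the fibre inclusion $Y_1\times Y_3 \hookrightarrow Y_1\times\tY_2\times Y_3 \to Y_1\times Y_3$ — here one uses that $i_{23*}\sG_{23}$, resp. $i_{12*}\sG_{12}$, is the kernel so the convolution on the bigger space reproduces $(i_{23*}\sG_{23})*(i_{12*}\sG_{12})$; (5) read off the middle term. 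The third identification, $(i_{23*}\sG_{23})*(i_{12*}\sG_{12}) \cong (i_{23}^*i_{23*}\sG_{23})*\sG_{12}$, is obtained by running the same argument with the roles of the two factors swapped.

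The main obstacle is purely bookkeeping: keeping straight which of the three varieties has been deformed, verifying that the relevant squares really are base-change (transverse) squares so that underived pullback and pushforward behave, and checking that the pushforward-tensor interchange of step (3) is legitimate (it is, because $\id \times i \times \id$ is a closed immersion, so the projection formula applies without flatness hypotheses on the sheaves). The $\k^\times$-equivariant version requires no new idea: all the functors used — $\pi_{12}^*$, $(\id\times i\times\id)_*$, $\pi_{13*}$, tensor product — have equivariant lifts and the base-change and projection-formula isomorphisms are equivariant, so the same chain of isomorphisms holds verbatim in the equivariant derived category.
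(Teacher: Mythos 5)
Your proposal is correct and follows essentially the same route as the paper's proof: compute the convolution on $Y_1 \times Y_2 \times Y_3$, move everything to $Y_1 \times \tY_2 \times Y_3$ using the Cartesian squares, apply the projection formula for the closed immersion $\id\times i\times\id$, and identify each pushed-forward factor with the corresponding pulled-back kernel by flat base change along the projections; the remaining isomorphism then follows by symmetry. The paper's explicit diagram chase realizes exactly your steps (1)--(5), so this is not a different argument.
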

\begin{proof}
The proof amounts to diagram chasing.
\begin{align*}
\sG_{23} * i_{12}^* i_{12*} \sG_{12} &\cong \pi_{13*} \left( \pi_{12}^* (i_{12}^* i_{12*} \sG_{12}) \otimes \pi_{23}^* \sG_{23} \right) \\
&\cong \tilde{\pi}_{13*} \tilde{i}_* \left( \tilde{i}^* \tilde{\pi}_{12}^* i_{12*} \sG_{12} \otimes \pi_{23}^* \sG_{23} \right) 
\end{align*}
where we use the commuting squares
\begin{equation*}
\xymatrix{
Y_1 \times Y_2 \times Y_3 \ar[r]^{{\tilde{i}}} \ar[d]_{\pi_{12}} & Y_1 \times \tY_2 \times Y_3 \ar[d]_{{\tilde{\pi}}_{12}} \\
Y_1 \times Y_2 \ar[r]^{i_{12}} & Y_1 \times \tY_2 
} \quad \quad
\xymatrix{
Y_1 \times Y_2 \times Y_3 \ar[r]^{{\tilde{i}}} \ar[d]_{\pi_{13}} & Y_1 \times \tY_2 \times Y_3 \ar[d]_{{\tilde{\pi}}_{13}} \\
Y_1 \times Y_3 \ar[r]^{\mbox{id}} & Y_1 \times Y_3
}
\end{equation*}
The projection formula gives
\begin{eqnarray*}
\tilde{\pi}_{13*} \tilde{i}_{*} \left( \tilde{i}^* \tilde{\pi}_{12}^* i_{12*} \sG_{12} \otimes \pi_{23}^* \sG_{23} \right) 
&\cong& \tilde{\pi}_{13*} \left( \tilde{\pi}_{12}^* (i_{12*} \sG_{12}) \otimes \tilde{i}_{*} \pi_{23}^* \sG_{23} \right) \\ 
&\cong& \tilde{\pi}_{13*} \left( \tilde{\pi}_{12}^* (i_{12*} \sG_{12}) \otimes \tilde{\pi}_{23}^* (i_{23*} \sG_{23}) \right) \\
&\cong& i_{23*} \sG_{23} * i_{12*} \sG_{12}
\end{eqnarray*}
where the second line follows from flat base change on the commuting square
\begin{equation*}
\xymatrix{
Y_1 \times Y_2 \times Y_3 \ar[r]^{\tilde{i}} \ar[d]_{\pi_{23}} & Y_1 \times \tY_2 \times Y_3 \ar[d]_{{\tilde{\pi}}_{23}} \\
Y_2 \times Y_3 \ar[r]^{i_{23}} & \tY_2 \times Y_3.
}
\end{equation*}
This proves the first isomorphism in the Lemma. The second isomorphism follows similarly. If we also have a compatible $\k^\times$-action nothing changes in the proof since all the maps are naturally $\k^\times$-equivariant. 
\end{proof}

\subsection{Formality of $\E^{(r_2)} \circ \E^{(r_1)} \cong \E^{(r_1+r_2)} \otimes H^\star(\bG(r_1,r_1+r_2))$}

\begin{Proposition}\label{prop:theta_k}
We have the direct sum decomposition
$$\sE * \sE^{(r)} \cong \sE^{(r+1)} \otimes_\k H^*(\p^r) \cong \sE^{(r)} * \sE.$$
In the deformed setup, at the level of cohomology, we have
$$\H^*(i_{23*} \sE * i_{12*} \sE^{(r)}) \cong \sE^{(r+1)}[-r]\{r\} \oplus \sE^{(r+1)}[r+1]\{-r-2\}$$ 
where $i_{12}$ and $i_{23}$ are the inclusions
\begin{align*}
i_{12}: Y(\l-2r) \times Y(\l) \rightarrow Y(\l-2r) \times \tY(\l) \\
i_{23}: Y(\l) \times Y(\l+2) \rightarrow \tY(\l) \times Y(\l+2). 
\end{align*}
We also get for free the same relations if we replace all the $\sE$s above by $\sF$s.
\end{Proposition}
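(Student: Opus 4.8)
The plan is to derive both assertions from condition (iii) of the definition of a geometric categorical $\sl_2$ action --- which already determines the cohomology sheaves of $\sE * \sE^{(r)}$ --- together with condition (v) and the deformation data, the link being the cone description of $i^*i_*$ recalled at the start of this section.

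\emph{Step 1: reduction to a single operator.} Apply Lemma~\ref{lem:j1} with $Y_2 = Y(\l)$ and its deformation $\tY(\l)$, giving
$$i_{23*}\sE * i_{12*}\sE^{(r)} \;\cong\; \sE * \bigl( i_{12}^* i_{12*}\sE^{(r)} \bigr) \;\cong\; \bigl( i_{23}^* i_{23*}\sE \bigr) * \sE^{(r)} .$$
Combining this with the identity $i^*i_*\sG\{1\} \cong \Cone\bigl(\sG[-1]\{1\} \xrightarrow{\alpha} \sG[1]\{-1\}\bigr)$ and the exactness of convolution with a fixed kernel, one gets
$$i_{23*}\sE * i_{12*}\sE^{(r)} \;\cong\; \Cone\bigl(\sG[-1]\{1\} \xrightarrow{\,L\,} \sG[1]\{-1\}\bigr)\{-1\}, \qquad \sG := \sE * \sE^{(r)},$$
where $L$ is induced either by $1_{\sE}*\alpha_{\sE^{(r)}}$ or by $\alpha_{\sE}*1_{\sE^{(r)}}$; the two agree, since both compute the obstruction to deforming $\sG$ along the $Y(\l)$-direction, by the naturality of the Atiyah and Kodaira--Spencer classes underlying Lemma~\ref{lem:j1}.

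\emph{Step 2: long exact sequence and reduction to hard Lefschetz.} By condition (iii), $\H^*(\sG)\cong\sE^{(r+1)}\otimes H^\star(\p^r)$, so the cohomology sheaves of $\sG$ are copies of $\sE^{(r+1)}$ (each carrying one $\{\cdot\}$-twist) in the cohomological degrees $-r, -r+2, \dots, r$; a bidegree count shows each $\H^m(L)\colon \H^m(\sG)\{1\}\to\H^{m+2}(\sG)\{-1\}$ is, after an equal twist, an endomorphism of $\sE^{(r+1)}$. Chasing the long exact sequence of the cone, the two extreme maps --- out of $\H^{-r}(\sG)$ (target zero) and into $\H^{r}(\sG)$ (source zero) --- produce unconditionally the two summands $\sE^{(r+1)}[-r]\{r\}$ and $\sE^{(r+1)}[r+1]\{-r-2\}$, and every remaining contribution to $\H^*\bigl(i_{23*}\sE * i_{12*}\sE^{(r)}\bigr)$ is a kernel or cokernel of a middle map $\H^m(L)$, $m\in\{-r,-r+2,\dots,r-2\}$. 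So the deformed assertion is \emph{equivalent} to: $L$ obeys hard Lefschetz on $\H^*(\sG)$, i.e.\ every middle $\H^m(L)$ is an isomorphism. That same property also yields the non-deformed statement: when the cohomology sheaves of a complex look, via a degree-raising operator $L$, like $\sE^{(r+1)}\otimes H^\star(\p^r)$ with $L$ the Lefschetz operator, the complex is forced (by a Postnikov-tower argument using vanishing of negative $\Ext$ between sheaves) to be the direct sum of its cohomology --- this is the formality-from-deformations mechanism after which this section is named.

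\emph{Step 3 (the main obstacle): hard Lefschetz for $L$.} The base case $r=1$ is exactly condition (v): Step 2 specialised to $r=1$ shows condition (v) is equivalent to the unique middle map $\H^0(L)\colon\sE^{(2)}\to\sE^{(2)}$ being an isomorphism. The general case I would prove by induction on $r$: using that $\sE^{(r)}$ is a direct summand of $\sE * \sE^{(r-1)}$ (the non-deformed isomorphism supplied by the previous stage) and the naturality of the Atiyah--Kodaira--Spencer construction, one relates the operator $L$ at level $r$ to operators built from $\alpha_\sE$ on the iterated convolutions $\sE * \cdots * \sE * \sE^{(r-1)}$, invoking the divided-power isomorphisms of condition (iii) and, where needed, deformations in more than one factor. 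Executing this reduction --- tracking the $H^\star(\p^j)$-factors, the direct summand cut out at each step, and the several deformation directions --- is the technically heavy heart of the proof; everything else is formal. Finally, the $\sE^{(r)}*\sE$ versions are handled by the identical argument with the convolution order reversed, and the $\sF$-versions follow ``for free'' by taking right adjoints of all kernels, using condition (ii), the self-duality up to shift of $H^\star(\p^r)$, and the compatibility of adjunction with convolution and with the closed immersions $i_{12}, i_{23}$ and the deformations.
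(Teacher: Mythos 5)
Your Steps 1 and 2 match the paper's reduction quite precisely: use Lemma \ref{lem:j1} to rewrite the deformed convolution as a cone of a single connecting map $L = \gamma \ast I$ acting on $\sG := \sE \ast \sE^{(r)}$, observe that condition (iii) gives $\H^*(\sG) \cong \sE^{(r+1)} \otimes H^\star(\p^r)$, and note that both the deformed and the formality claim come down to a hard-Lefschetz property for $L$ on $\H^*(\sG)$. This is the right frame, and your identification of condition (v) as exactly the $r=1$ base case is also correct.

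However, Step 3 is a genuine gap, and it is the heart of the proposition. You write that the inductive step would be proved ``by relating $L$ at level $r$ to operators built from $\alpha_\sE$ on iterated convolutions \dots\ tracking the $H^\star(\p^j)$-factors, the direct summand cut out at each step, and the several deformation directions,'' and you concede this is ``the technically heavy heart of the proof.'' That is precisely the part you have not done, and it is not obvious how that tracking would be carried out: the divided-power isomorphisms of condition (iii) are only abstract isomorphisms (no natural transformations are specified), so there is no canonical way to ``cut out'' $\sE^{(r)}$ as a summand of $\sE \ast \sE^{(r-1)}$ and chase $L$ through that projection. The paper sidesteps this entirely with a different idea: a summand-counting contradiction. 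Suppose some middle map $\H^m(L)$ fails to be an isomorphism (hence is zero, since $\End(\sE^{(r+1)})=\k$); then the cone $\H^*(i_{23}^*i_{23*}\sE \ast \sE^{(r)})$ has $\ge 4$ copies of $\sE^{(r+1)}$, so after convolving on the right with $\sE$ and using $\sE^{(r)} \ast \sE \cong \sE^{(r)} \otimes H^\star(\p^{r-1})$, the object $\H^*(i_{23}^*i_{23*}\sE \ast \sE^{(r-1)} \ast \sE)$ contains $\ge 4r$ copies of $\sE^{(r+1)}$. On the other hand, the inductive hypothesis gives an exact triangle $\sE^{(r)}[r]\{-r-1\} \to i_{23}^*i_{23*}\sE \ast \sE^{(r-1)} \to \sE^{(r)}[-r+1]\{r-1\}$, which after a further $\ast\,\sE$ bounds that count by $2(r+1)$. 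Since $4r > 2(r+1)$ for $r>1$, contradiction. Without this counting idea your induction does not go through as described. (A secondary, smaller point: for the formality claim the paper does not run a Postnikov argument but instead writes down the explicit quasi-isomorphism $\iota \oplus (\gamma I)\iota \oplus \cdots \oplus (\gamma I)^r\iota \colon \sE^{(r+1)} \otimes H^\star(\p^r) \to \sE\ast\sE^{(r)}$; your Postnikov-tower route would need to justify the vanishing of the relevant positive $\Ext$ extension classes, which is not among the listed hypotheses.)
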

\begin{proof}
By Lemma \ref{lem:j1} we have
$$i_{23*} \sE * i_{12*} \sE^{(r)} \cong i_{23}^* i_{23*} \sE * \sE^{(r)}.$$
Using the standard exact triangle $\sE [1]\{-2\} \rightarrow i_{23}^* i_{23*} \sE \rightarrow \sE$ we find that
\begin{eqnarray}\label{eq:16}
i_{23*} \sE * i_{12*} \sE^{(r)} 
&\cong& \Cone (\sE[-1] \xrightarrow{\gamma} \sE[1]\{-2\}) * \sE^{(r)} \cong \Cone (\sE * \sE^{(r)} [-1] \xrightarrow{\gamma I} \sE * \sE^{(r)} [1]\{-2\})
\end{eqnarray}
where $\gamma$ is the connecting map in the standard triangle above. Basically, we need to understand the map induced by $\gamma I$ at the level of cohomology.

When $r = 1$ we know (by assumption) that the left side of equation (\ref{eq:16}) is isomorphic (at the level of cohomology) to $\sE^{(2)}[-1]\{1\} \oplus \sE^{(2)}[2]\{-3\}$ so that $\gamma I$ must induce an isomorphism (at the level of cohomology) on one summand $\sE^{(2)}$ (keep in mind that all $\sE^{(r)}$ are sheaves). Now consider the map
$$\sE^{(2)}[1]\{-1\} \oplus \sE^{(2)}[-1]\{1\} \xrightarrow{\iota \oplus (\gamma I) \circ \iota} \sE * \sE$$
where $\iota$ is the inclusion of $\sE^{(2)}[1]\{-1\}$ into the lowest degree cohomology of $\sE * \sE$. Note that we do not need to know $\sE * \sE$ is formal in order to define $\iota$ (in general, if you have a complex $C^\cdot$ bounded from below then you can include its lowest non-zero cohomology into it $\H^{l}(C^\cdot) \rightarrow C^\cdot$). By the fact above (about $\gamma I$) this map induces an isomorphism at the level of cohomology and so is an isomorphism $\sE * \sE \cong \sE^{(2)} \otimes_\k H^*(\p^1)$ (quasi-isomorphisms are by definition isomorphisms in the derived category).

This completes the base case $r=1$. We now proceed by induction on $r$. First we need to understand the map
$$\sE * \sE^{(r)} [-1]\{1\} \xrightarrow{\gamma I} \sE * \sE^{(r)} [1]\{-1\}$$
at the level of cohomology. We know $\H^*(\sE * \sE^{(r)}) \cong \sE^{(r+1)} \otimes_\k H^*(\p^r)$ and we need to show $\gamma I$ induces an isomorphism between all but a pair of summands $\sE^{(r+1)}$ (the one in highest cohomological degree on the left side and lowest cohomological degree on the right side).

Suppose this were not the case. Then $\H^*(i_{23}^* i_{23*} \sE * \sE^{(r)}) \cong \Cone(\gamma I)$ would contain at least four summands $\sE^{(r+1)}$. By induction we know $\sE^{(r-1)} * \sE \cong \sE^{(r)} \otimes_\k H^*(\p^{r-1})$ so this would mean that
$$\H^*(i_{23}^* i_{23*} \sE * \sE^{(r-1)} * \sE)$$
contains at least $4r$ summands $\sE^{(r+1)}$. On the other hand, also by induction we know that $\H^*(i_{23}^* i_{23*} \sE * \sE^{(r-1)}) \cong \sE^{(r)} [-r+1]\{r-1\} \oplus \sE^{(r)}[r]\{-r-1\}$ which means we have an exact triangle
$$\sE^{(r)}[r]\{-r-1\} \rightarrow i_{23}^* i_{23*} \sE * \sE^{(r-1)} \rightarrow \sE^{(r)} [-r+1]\{r-1\}.$$
So $\H^*(i_{23}^* i_{23*} \sE * \sE^{(r-1)} * \sE)$ contains at most $2(r+1)$ summands $\sE^{(r+1)}$. Since $4r > 2(r+1)$ if $r > 1$ this is a contradiction.

Finally, as in the case $r=1$, we have a map
$$\sE^{(r+1)} \otimes_\k H^*(\p^r) \xrightarrow{\iota \oplus (\gamma I) \circ \iota \oplus \dots \oplus (\gamma I)^r \iota} \sE * \sE^{(r)}$$
where $\iota$ is the inclusion of $\sE^{(r+1)} [r]\{-r\}$ into the lowest cohomological degree of $\sE * \sE^{(r)}$ (note that, as before, we do not need to know that $\sE * \sE^{(r)}$ is formal in order to define $\iota$). This induces an isomorphism on cohomology (and hence must be an isomorphism $\sE^{(r+1)} \otimes_\k H^\star(\p^r) \xrightarrow{\sim} \sE * \sE^{(r)}$ in the derived category). 
\end{proof}

\begin{Remark} The fact that $\Ext^i(\sE^{(r+1)}(\l),\sE^{(r+1)}(\l)\{j\}) = 0$ for $i < 0$ (and any $j \in \Z$) while $\End(\sE^{(r+1)}(\l)) \cong \k \cdot I$ (see Lemma \ref{lem:homs2}) means that $\iota$ is actually unique (up to a non-zero multiple). Similarly we can define
$$\pi: \sE(\l+r) * \sE^{(r)}(\l-1) \rightarrow \sE^{(r+1)}(\l)[-r]\{r\}$$
as the natural projection out of the top ($\pi$ is likewise unique, up to non-zero multiple). 
\end{Remark}

\subsection{Formality of $\F \circ \E \cong \E \circ \F \oplus \id \otimes H^\star(\p)$}

The proof of formality here is analogous to the one in the last section. Recall that if $\l \le 0$ then $\sF(\l+1) * \sE(\l+1) \cong \sE(\l-1) * \sF(\l-1) \oplus \sP$. Lemma \ref{lem:FEtoP} below implies that given a map $\sF(\l+1) * \sE(\l+1) \rightarrow \sF(\l+1) * \sE(\l+1) [i]\{j\}$ there is an induced map $\sP \rightarrow \sP [i]\{j\}$ well defined up to a non-zero multiple. 

\begin{Lemma}\label{lem:FEtoP} If $\l \le 0$ we have 
$$\Hom(\sP, \sE(\l-1) * \sF(\l-1)) = 0 \text{ and } \Hom(\sE(\l-1) * \sF(\l-1), \sP) = 0$$
where $\H^*(\sP) \cong \O_\Delta \otimes_{\k} H^\star(\p^{-\l-1})$. 
\end{Lemma}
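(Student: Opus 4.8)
The plan is to compute the two $\Hom$ spaces by moving to the deformed setting and applying Lemma \ref{lem:j1}, exactly in the spirit of Proposition \ref{prop:theta_k}. Since $\sP$ is (a priori only known at the level of cohomology to be) $\O_\Delta \otimes_\k H^\star(\p^{-\l-1})$, while $\sE(\l-1) * \sF(\l-1)$ is a genuine convolution, the natural strategy is to realize $\sP$ as a direct summand of $\sF(\l+1) * \sE(\l+1)$ and then to compute $\Hom$ between $\sF(\l+1) * \sE(\l+1)$ and $\sE(\l-1) * \sF(\l-1)$ directly. The key point is that both of these objects have geometric descriptions as convolutions over the deformed spaces $\tY(\l)$, and adjunction (condition (ii), i.e. $\sE^{(r)}(\l)_R = \sF^{(r)}(\l)[r\l]\{-r\l\}$) converts a $\Hom$ between convolutions into a $\Hom$ computed on a single $Y(\l)$, where the support conditions (condition (vi)) and the structure of $\Ext$-groups of the $\sE^{(r)}$'s can be brought to bear.

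The key steps, in order, would be as follows. First, rewrite $\Hom(\sE(\l-1) * \sF(\l-1), \sP)$ using that $\sP$ is a summand of $\sF(\l+1) * \sE(\l+1)$, so it suffices to show $\Hom(\sE(\l-1) * \sF(\l-1), \sF(\l+1) * \sE(\l+1)) $ has the ``right size'' — namely, that it receives no contribution from $\sP$. Second, by adjunction move $\sF(\l+1) = \sE(\l+1)_R$ (up to shift) to the other side, turning the $\Hom$ into one of the form $\Hom(\sE(\l+1) * \sE(\l-1) * \sF(\l-1), \sE(\l+1))$ or, after a further adjunction on the $\sF(\l-1)$, into a $\Hom$-space between composites of $\sE$'s and $\sF$'s landing in $D(Y(\l+1)\times Y(\l+1))$ or $D(Y(\l-1)\times Y(\l-1))$. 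Third, use Proposition \ref{prop:theta_k} to rewrite $\sE(\l+1) * \sE(\l-1) \cong \sE^{(2)}(\l) \otimes_\k H^\star(\p^1)$, reducing everything to $\Hom$-spaces involving $\sE^{(2)}(\l)$, $\sE(\l\pm 1)$ and $\O_\Delta$. Fourth, analyze these remaining $\Hom$-spaces using the support condition (vi) — which says $\supp(\sE^{(r)})$ is not contained in the image of $\supp(\sE^{(r+k)})$ — together with condition (i) (finiteness of $\Hom$'s) and the fact that the $\sE^{(r)}$'s are sheaves (condition (vii)), to conclude the relevant $\Ext$ vanishes in the degrees that would correspond to $\sP$.

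The main obstacle I expect is Step 4: extracting the vanishing from the support condition. The point is delicate because we are not claiming the full $\Hom(\sF * \sE, \sE * \sF)$ vanishes — on the contrary it is large, since $\sE * \sF$ is a summand of $\sF * \sE$ — but rather that the ``$\sP$-part'' of this $\Hom$ is zero, i.e. that no nonzero map $\sP \to \sE(\l-1)*\sF(\l-1)$ or $\sE(\l-1)*\sF(\l-1) \to \sP$ exists. This requires carefully separating the summand $\sP$ (whose support, on the diagonal, reflects the $\p^{-\l-1}$ coming from the ``excess'' of $\sE^{(r)}$'s over $\sF^{(r)}$'s) from the summand $\sE(\l-1) * \sF(\l-1)$, and one needs to see that these two pieces are supported on loci that are ``transverse enough'' — precisely the content of condition (vi), which guarantees that the support of a relevant higher divided power is not swallowed by the others. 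One should also be careful that the argument works uniformly in $\l \le 0$, and that all shifts $[i]\{j\}$ are tracked so that the cohomological degree $i$ where $\sP$ lives does not accidentally coincide with a degree where $\sE(\l-1)*\sF(\l-1)$ contributes; the grading by $\{\cdot\}$ coming from the $\k^\times$-action is useful here for separating summands that would otherwise collide.
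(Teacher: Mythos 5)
Your proposal and the paper's proof diverge significantly, and the proposed route contains a real gap.

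The paper's proof of this lemma is a short adjunction-plus-degree-count argument that uses none of the machinery you invoke. One computes, for each shift $n$,
\[
\Hom(\O_\Delta[n]\{-n\}, \sE(\l-1) * \sF(\l-1)) \cong \Hom(\sE(\l-1)_L [n]\{-n\}, \sF(\l-1)) \cong \Hom(\sF(\l-1) [-\l+1+n]\{\l-1-n\}, \sF(\l-1)),
\]
using $\sE(\l-1)_L = \sF(\l-1)[-\l+1]\{\l-1\}$ from condition (ii). Since $\sF(\l-1)$ is a sheaf (condition (vii)), this self-$\Hom$ vanishes whenever $-\l+1+n > 0$, i.e.\ $n > \l-1$. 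The summands of $\H^*(\sP)$ are $\O_\Delta[n]\{-n\}$ with $n$ ranging over $\l+1, \l+3, \dots, -\l-1$, all of which satisfy $n > \l-1$, so a spectral-sequence (or truncation) argument over the filtration of $\sP$ by cohomology gives the vanishing. The second vanishing is the mirror computation using $\sE(\l-1)_R = \sF(\l-1)[\l-1]\{-\l+1\}$ and the observation that for all relevant summands $n < -\l+1$, so the $\Ext$ lands in negative degree again. No deformation, no $\sE * \sE \cong \sE^{(2)} \otimes H^*(\p^1)$, and no support condition is needed.

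The gap in your proposal is Step 4, which you yourself flag as the main obstacle. You suggest closing it with condition (vi), but that condition is about the projections of $\supp(\sE^{(r)})$ versus $\supp(\sE^{(r+k)})$ to a single $Y(\l)$, and in the paper it is used for something quite different (Lemma \ref{lem:supp}, separating $\sE^{(r)}$ from $\sE^{(r+k)} * \sF^{(k)}$). It does not obviously distinguish the diagonal-supported summand $\sP$ from $\sE(\l-1)*\sF(\l-1)$, whose support also contains the diagonal. Moreover, your Step 1 reduces to showing $\Hom(\sE(\l-1)*\sF(\l-1), \sF(\l+1)*\sE(\l+1))$ has the ``right size,'' but since $\sE(\l-1)*\sF(\l-1)$ is itself a summand of $\sF(\l+1)*\sE(\l+1)$, this $\Hom$ is nonzero, and isolating the $\sP$-component of it is exactly the problem you set out to solve; the reformulation does not simplify it. The fundamental observation you are missing is that the adjunction shift $[\pm(\l-1)]\{\mp(\l-1)\}$ built into condition (ii), combined with the sheaf property of the $\sF$'s, pushes the relevant $\Ext$ into negative cohomological degree for every summand of $\H^*(\sP)$ — that single degree count is the whole proof, and none of the heavier tools you list are needed.
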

\begin{proof}
We have
\begin{eqnarray*}
\Hom(\O_\Delta[n]\{-n\}, \sE(\l-1) * \sF(\l-1)) 
&\cong& \Hom(\sE(\l-1)_L [n]\{-n\}, \sF(\l-1)) \\
&\cong& \Hom(\sF(\l-1) [-\l+1+n]\{\l-1-n\}, \sF(\l-1)) 
\end{eqnarray*}
which is zero if $n > \l - 1$. Since 
$$\H^*(\sP) \cong \bigoplus_{j=0}^{-\l-1} \O_\Delta[-\l-1-2j]\{\l+1+2j\}$$
this means $\Hom(\sP, \sE(\l-1) * \sF(\l-1)) = 0$. 

Similarly, we have 
\begin{eqnarray*}
\Hom(\sE(\l-1) * \sF(\l-1), \O_\Delta [n]\{-n\}) 
&\cong& \Hom(\sF(\l-1), \sE(\l-1)_R [n]\{-n\}) \\
&\cong& \Hom(\sF(\l-1), \sF(\l-1)[\l-1+n]\{-\l+1-n\})
\end{eqnarray*}
which is zero if $n < -\l+1$. This means $\Hom(\sE(\l-1) * \sF(\l-1), \sP) = 0$. 
\end{proof}

\begin{Proposition}\label{prop2:EF=FE}
If $\l \le 0$ we have
$$\sF(\l+1) * \sE(\l+1) \cong \sE(\l-1) * \sF(\l-1) \oplus \O_\Delta \otimes_{\k} H^\star(\p^{-\l-1}).$$
while if $\l \ge 0$ we have
$$\sE(\l-1) * \sF(\l-1) \cong \sF(\l+1) * \sE(\l+1) \oplus \O_\Delta \otimes_{\k} H^\star(\p^{\l-1}).$$

In the deformed setup, if $\l \le -1$ and we restrict away from $\mbox{supp}(\sE(\l-1) * \sF(\l-1))$, we have
$$\H^*(i_{23*} \sF(\l+1) * i_{12*} \sE(\l+1)) \cong \O_\Delta[-\l]\{\l-1\} \oplus \O_\Delta[\l+1]\{-\l-1\}$$
where $i_{12}$ and $i_{23}$ are the inclusions
\begin{align*}
i_{12}: Y(\l) \times Y(\l+2) \rightarrow Y(\l) \times \tY(\l+2) \\
i_{23}: Y(\l+2) \times Y(\l) \rightarrow \tY(\l+2) \times Y(\l)
\end{align*}
(and similarly if $\l \ge 1$).
\end{Proposition}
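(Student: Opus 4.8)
The plan is to run the argument in exact parallel with the proof of Proposition~\ref{prop:theta_k}, now using the deformation $\tY(\l+2)$ in place of $\tY(\l)$. I treat the case $\l \le 0$; the case $\l \ge 0$ is obtained by interchanging the roles of $\sE$ and $\sF$ throughout. First I would apply Lemma~\ref{lem:j1} with $Y_1 = Y(\l)$, $Y_2 = Y(\l+2)$ (with deformation $\tY_2 = \tY(\l+2)$), $Y_3 = Y(\l)$, $\sG_{12} = \sE(\l+1)$, $\sG_{23} = \sF(\l+1)$, to get
$$i_{23*}\sF(\l+1) * i_{12*}\sE(\l+1) \;\cong\; \bigl(i_{23}^* i_{23*}\sF(\l+1)\bigr) * \sE(\l+1).$$
Feeding in the standard deformation triangle $\sF(\l+1)[1]\{-2\} \to i_{23}^* i_{23*}\sF(\l+1) \to \sF(\l+1)$ and using that $(-) * \sE(\l+1)$ is triangulated, this becomes
$$i_{23*}\sF(\l+1) * i_{12*}\sE(\l+1) \;\cong\; \Cone\!\bigl(\sF(\l+1)*\sE(\l+1)[-1] \xrightarrow{\;\gamma I\;} \sF(\l+1)*\sE(\l+1)[1]\{-2\}\bigr),$$
where $\gamma$ is the connecting map of the deformation triangle, well-defined up to a nonzero scalar.

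Next I would restrict everything to the open subset $U := (Y(\l)\times Y(\l)) \setminus \supp(\sE(\l-1)*\sF(\l-1))$, on which $\sE(\l-1)*\sF(\l-1)$ is acyclic, so that condition~(iv) gives $\sF(\l+1)*\sE(\l+1)|_U \cong \sP|_U$ with $\H^*(\sP|_U) \cong \O_\Delta|_U \otimes_\k H^\star(\p^{-\l-1})$, a direct sum of exactly $-\l$ shifted copies of $\O_\Delta|_U$. The task, exactly as in Proposition~\ref{prop:theta_k}, is to show that $\gamma I$ induces on this cohomology a nonzero multiple of the hyperplane-multiplication (Lefschetz) operator on $H^\star(\p^{-\l-1})$, which lowers the grading by $\la 2 \ra$ and has one-dimensional kernel and cokernel. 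Granting this, taking cohomology of the cone produces exactly the cokernel summand $\O_\Delta[-\l]\{\l-1\}$ together with the (shifted) kernel summand $\O_\Delta[\l+1]\{-\l-1\}$, which is the claimed deformed statement. To establish the Lefschetz claim I would induct on $-\l$: the base $\l = -1$ is trivial, since then $\H^*(\sP|_U) \cong \O_\Delta|_U$ sits in a single cohomological degree, forcing $\gamma I|_U = 0$ on cohomology (which is the Lefschetz operator on $H^\star(\p^0)$); for $\l \le -2$, I would compare the number of $\O_\Delta$-summands on the two sides of the cone, using condition~(vii) that all kernels are sheaves and the support hypothesis~(vi) to rule out ``interior'' summands surviving, thereby forcing the rank of the induced map to be maximal (namely $-\l-1$). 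An alternative worth trying is to pass through the adjunction $\sE(\l+1)_R \cong \sF(\l+1)[\l+1]\{-\l-1\}$ and deduce this from the already-established deformed statement of Proposition~\ref{prop:theta_k} for $\sE * \sE$.

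To promote the abstract decomposition of condition~(iv) to a genuine isomorphism in $D(Y(\l)\times Y(\l))$, I would imitate the closing move of Proposition~\ref{prop:theta_k}: using the adjunction unit $\eta$ (unique up to scalar by Lemma~\ref{lem:homs2}) together with $\gamma$, assemble a map
$$\bigl(\O_\Delta \otimes_\k H^\star(\p^{-\l-1})\bigr) \oplus \bigl(\sE(\l-1)*\sF(\l-1)\bigr) \longrightarrow \sF(\l+1)*\sE(\l+1)$$
acting as $\bigoplus_j (\gamma I)^j \circ \eta$ on the first summand and as the component supplied abstractly by condition~(iv) on the second (Lemma~\ref{lem:FEtoP} guarantees this second component is canonical up to scalar). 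One then checks that this map is an isomorphism on cohomology, hence a quasi-isomorphism, hence an isomorphism in the derived category. For $\l \ge 0$ one repeats the argument with $\sE$ and $\sF$ swapped.

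The main obstacle is the middle step: showing that $\gamma I$ realizes the \emph{full} Lefschetz operator rather than degenerating partway. As in Proposition~\ref{prop:theta_k}, the only available tools are the $\O_\Delta$-summand count, the fact that all kernels are sheaves, and the support hypothesis~(vi); the extra subtlety here, compared with the $\sE * \sE$ case, is that the relevant cohomology is visible only after restricting to the open set $U$, so the counting argument and the application of~(vi) must be carried out $\k^\times$-equivariantly on $U$, and one must check that~(vi) is strong enough to prevent two of the interior $\O_\Delta$-summands from surviving in the cone.
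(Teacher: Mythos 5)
Your overall skeleton matches the paper's: use Lemma~\ref{lem:j1} and the deformation triangle to identify $i_{23*}\sF(\l+1) * i_{12*}\sE(\l+1)$ with a cone of a degree-$[2]\{-2\}$ endomorphism of $\sF(\l+1)*\sE(\l+1)$, show that on cohomology this endomorphism acts on the $\sP$-part as a full-rank Lefschetz operator, read off the deformed statement, and finally assemble $\sum_j (\gamma I)^j\circ\iota$ together with the $\sE*\sF$-component (controlled by Lemma~\ref{lem:FEtoP}) into a quasi-isomorphism. That outer structure is exactly right, and your identification of the base case $\l=-1$ and of the $\k^\times$-equivariance issues is sound.

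The gap is in the middle step, which is the whole content of the proposition. You write that for $\l\le -2$ you would ``compare the number of $\O_\Delta$-summands on the two sides of the cone'' and appeal to conditions (vi) and (vii) to ``rule out interior summands surviving,'' but both sides of the cone have the same number of $\O_\Delta$-summands ($-\l$ each), so there is nothing to compare there, and no induction on $-\l$ is described that would close the argument. The actual mechanism in the paper is a \emph{three-fold} convolution: assume the rank of $\gamma$ on $\H^*(\sP)$ is not maximal, then hit the cone with an extra copy of $\sE$ on the right and compute $\H^*(\sF * \sE * i_{12}^*i_{12*}\sE(\l+1))$ two different ways. Grouping as $(\sF*\sE)*\sE \cong (\sE*\sF \oplus \sP')*\sE$ and using that $\gamma$ on $\H^*(\sP'*\sE)$ induces zero, the assumed failure yields at least $-2\l$ summands $\sE$; grouping instead as $\sF*(\sE*\sE)\cong \sF*\sE^{(2)}\otimes H^\star(\p^1)$ and invoking the deformed statement of Proposition~\ref{prop:theta_k} together with Lemma~\ref{lem:supp} (which rests on condition~(vi)) gives at most $2(-\l-1)$ summands $\sE$ — a contradiction when $\l\le -2$. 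This three-fold convolution and the comparison of $\sE$-summands (not $\O_\Delta$-summands) is the missing idea. Relatedly, restricting to $U$ at the outset, as you propose, discards the $\sE(\l-1)*\sF(\l-1)$ summand before the counting takes place, which would prevent the three-fold comparison from being carried out; the paper keeps the full object throughout and only invokes the restriction to phrase the final conclusion. Your suggested alternative via adjunction and Proposition~\ref{prop:theta_k} is not worked out and does not obviously transfer, since the decompositions $\sE*\sE\cong\sE^{(2)}\otimes H^\star(\p^1)$ and $\sF*\sE\cong\sE*\sF\oplus\O_\Delta\otimes H^\star(\p^{-\l-1})$ have genuinely different shapes.
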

\begin{proof}
We suppose $\l \le 0$ (the other case is proved in the same way).  We assume $\l \le -2$ (if $\l = 0,-1$ there is nothing really to prove). 

The proof is similar to that of Proposition \ref{prop:theta_k}. Instead of studying equation (\ref{eq:16}) we look at
\begin{eqnarray}\label{eq:17}
\sF(\l+1) * i^*_{12} i_{12*} \sE (\l+1) \cong \Cone( \sF * \sE [-1] \xrightarrow{I \gamma} \sF * \sE [1]\{-2\}).
\end{eqnarray}
Now we know $\sF(\l+1) * \sE(\l+1) \cong \sE(\l-1) * \sF(\l-1) \oplus \sP$ and we want to show that the induced map $\H^*(\sP [-1]) \xrightarrow{I \gamma} \H^*(\sP[1]\{-2\})$ is an isomorphism on all but one pair of summands $\O_\Delta$ (the highest degree summand  on the left side and lowest degree summand on the right side). 

Suppose this is not the case. Then $\H^*(\sF(\l+1) * i^*_{12} i_{12*} \sE(\l+1))$ would contain at least four summands $\O_\Delta$. Now consider
$$\sF * \sE * i^*_{12} i_{12*} \sE (\l+1) \cong \Cone(\sF * \sE * \sE (\l+1) [-1] \xrightarrow{II \gamma} \sF * \sE * \sE (\l+1) [1]\{-2\}).$$
On the one hand the map $II \gamma$ can be rewritten as
$$\sE * \sF * \sE (\l+1) [-1] \oplus \sP' * \sE (\l+1)  [-1] \xrightarrow{II \gamma \oplus I \gamma} \sE * \sF * \sE (\l+1) [1]\{-2\} \oplus \sP' * \sE (\l+1)  [1]\{-2\}$$
where $\H^*(\sP') \cong \O_\Delta \otimes_\k H^\star(\p^{-\l-3})$. The induced map $\sP' * \sE [-1] \xrightarrow{I \gamma} \sP' * \sE [1]\{-2\}$ at the level of cohomology is zero. So when we take the cone we obtain $2(-\l-2)$ summands $\sE$. Meanwhile, by the assumption above,
$$\H^*(\Cone(\sE * \sF * \sE (\l+1) [-1]  \xrightarrow{II \gamma} \sE * \sF * \sE (\l+1) [1]\{-2\}))$$
contains at least $4$ summands $\sE$. So in total we have at least $-2\l$ summands $\sE$.

On the other hand, $\sF * \sE * \sE \cong \sF * \sE^{(2)} [-1]\{1\} \oplus \sF * \sE^{(2)} [1]\{-1\}$ and by Proposition \ref{prop:theta_k} the induced map
$$\sF * \sE^{(2)} [-2]\{2\} \oplus \sF * \sE^{(2)} \xrightarrow{II \gamma} \sF * \sE^{(2)} \oplus \sF * \sE^{(2)} [2]\{-2\} $$
induces an isomorphism between the summands $\sF * \sE^{(2)}$ on either side. But
\begin{eqnarray*}
\H^*(\sF * \sE * \sE (\l+1))
&\cong& \H^*(\sE * \sF * \sE) \oplus \sE \otimes_\k H^*(\p^{-\l-3}) \\
&\cong& \H^*(\sE * \sE * \sF) \oplus \sE \otimes_\k (H^*(\p^{-\l-1}) \oplus H^*(\p^{-\l-3}))
\end{eqnarray*}
which means
$$\H^*(\sF * \sE^{(2)}) \cong \H^*(\sE^{(2)} * \sF) \oplus \sE \otimes_\k H^*(\p^{-\l-2}).$$
This means that in total we have at most $2(-\l-1)$ summands $\sE$ (contradiction). Here we used that $\H^*(\sE^{(2)} * \sF)$ contains no copies $\sE$ which follows by Lemma \ref{lem:supp} since $\supp{\sE} \not\subset \supp(\sE^{(2)} * \sF)$. This proves the deformed claim. 

Finally, we have the map
$$\sE * \sF \oplus \O_\Delta \otimes_\k H^\star(\p^{-\l-1}) \xrightarrow{I \oplus \iota \oplus (I \gamma) \circ \iota \oplus \dots \oplus (I \gamma)^{-\l-1} \circ \iota} \sE * \sF \oplus \sP \cong \sF * \sE(\l+1)$$
where $\iota: \O_\Delta [-\l-1]\{\l+1\} \rightarrow \sP$ is the inclusion into the lowest cohomological degree. By the result above this map is an isomophism at the level of cohomology (so it must be an isomorphism in the derived category).
\end{proof}

\begin{Lemma}\label{lem:supp} If $\l \le 0$ and $r \ge 1$ we have 
$$\supp(\sE^{(r)}(\l-r)) \not\subset \supp(\sE^{(r+k)} * \sF^{(k)}(\l-2r-k))$$ 
and similarly if $\l \ge 0$. 
\end{Lemma}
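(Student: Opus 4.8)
The plan is to reduce the claimed non-containment of closed subsets of $Y(\l-2r) \times Y(\l)$ to a non-containment of their images under the projection onto $Y(\l)$, where the latter is exactly axiom (vi) in the definition of a geometric categorical $\sl_2$ action. I will treat $\l \le 0$; the case $\l \ge 0$ is symmetric, using the second half of axiom (vi), and the $\k^\times$-equivariance plays no role since it does not affect supports.

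First I would set notation: write $A := Y(\l-2r)$, $B := Y(\l-2r-2k)$, $C := Y(\l)$, so that $\sF^{(k)}(\l-2r-k) \in D(A \times B)$ and $\sE^{(r+k)}(\l-r-k) \in D(B \times C)$ (the argument of $\sE^{(r+k)}$ being forced by composability), and hence $\sE^{(r+k)} * \sF^{(k)}(\l-2r-k) \in D(A \times C)$. Let $\pi_{AB}, \pi_{BC}, \pi_{AC}$ be the projections from $A \times B \times C$, and $p : A \times C \to C$, $q : B \times C \to C$ the projections onto $C = Y(\l)$, so that $p \circ \pi_{AC} = q \circ \pi_{BC}$.

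The key step is to show $p\big(\supp(\sE^{(r+k)} * \sF^{(k)}(\l-2r-k))\big) \subseteq q\big(\supp(\sE^{(r+k)}(\l-r-k))\big)$, i.e. projecting the support of the convolution down to $Y(\l)$ lands inside the projection of the support of the ``$\sE$-factor'' alone. For this I would unwind $\sE^{(r+k)} * \sF^{(k)}(\l-2r-k) = \pi_{AC*}\big(\pi_{AB}^* \sF^{(k)}(\l-2r-k) \otimes \pi_{BC}^* \sE^{(r+k)}(\l-r-k)\big)$ and bound supports in the usual way: the support of a derived tensor product lies in the intersection of the supports, the support of a derived pullback along a flat map is the preimage of the support, and the support of a proper derived pushforward lies in the image of the support. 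This yields $\supp(\sE^{(r+k)} * \sF^{(k)}) \subseteq \pi_{AC}\big(\pi_{AB}^{-1}(\supp \sF^{(k)}) \cap \pi_{BC}^{-1}(\supp \sE^{(r+k)})\big)$; applying $p$ and using $p \circ \pi_{AC} = q \circ \pi_{BC}$, the $\sF^{(k)}$-factor becomes harmless (enlarge by dropping the intersection), and surjectivity of $\pi_{BC}$ gives $q\big(\pi_{BC}(\pi_{BC}^{-1}(\supp \sE^{(r+k)}))\big) = q(\supp \sE^{(r+k)})$, as wanted.

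With the key step established, the conclusion is immediate: axiom (vi) asserts precisely that $p\big(\supp(\sE^{(r)}(\l-r))\big)$ is \emph{not} contained in $q\big(\supp(\sE^{(r+k)}(\l-r-k))\big)$, so by the key step it is not contained in $p\big(\supp(\sE^{(r+k)} * \sF^{(k)}(\l-2r-k))\big)$ either; since $X \subseteq Y$ implies $p(X) \subseteq p(Y)$, this forces $\supp(\sE^{(r)}(\l-r)) \not\subset \supp(\sE^{(r+k)} * \sF^{(k)}(\l-2r-k))$. The only point needing care is the properness used to bound the support of $\pi_{AC*}(-)$ by the image of the support of its argument; this is guaranteed by the properness of the relevant supports over the appropriate factors built into the Fourier--Mukai formalism of a geometric categorical $\sl_2$ action. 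Everything else is routine support bookkeeping, so I do not expect any serious obstacle.
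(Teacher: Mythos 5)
Your proof is correct and follows essentially the same approach as the paper's: bound $\supp(\sE^{(r+k)} * \sF^{(k)})$ by the set-theoretic convolution of the supports, observe that after projecting to $Y(\l)$ the $\sF^{(k)}$-factor is irrelevant so the image lands inside the image of $\supp(\sE^{(r+k)})$, and conclude from the support axiom (which the paper mislabels as condition (vii), but is in fact (vi), as you correctly identify). Your write-up simply unwinds the set-theoretic convolution step more explicitly than the paper does.
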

\begin{proof}
We have that 
$$\supp(\sE^{(r+k)} * \sF^{(k)}) \subset \supp(\sE^{(r+k)}) * \supp(\sF^{(k)})$$
where the right hand side is the set-theoretic convolution of varieties. Also we have 
$$\pi_{Y(\l)}((\supp(\sE^{(r+k)}) * \supp(\sF^{(k)})) \subset \pi_{Y(\l)}(\sE^{(r+k)})$$
where $\pi_{Y(\l)}$ is the projection onto $Y(\l)$. By condition (vii) of having a geometric categorical $\sl_2$ action we have that $\pi_{Y(\l)}(\sE^{(r)}) \not\subset \pi_{Y(\l)}(\sE^{(r+k)})$ and the result follows. 
\end{proof}

\subsection{Idempotent completeness}

Let ${\mathcal C}$ be a graded additive category over $\k$ which is idempotent complete (meaning that every idempotent splits). Notice that the (derived) category of coherent sheaves on any variety is idempotent complete. 

Suppose that (each graded piece of) the space of homs between two objects is finite dimensional. Then every object in ${\mathcal C}$ has a unique, up to isomorphism, direct sum decomposition into indecomposables (see section 2.2 of \cite{Ri}). In particular, this means that if $A,B,C \in {\mathcal C}$ then we have the following cancellation laws: 
\begin{eqnarray}\label{eq:A}
A \oplus B \cong A \oplus C \Rightarrow B \cong C
\end{eqnarray}
\begin{eqnarray}\label{eq:B}
A \otimes_\k \k^n \cong B \otimes_\k \k^n \Rightarrow A \cong B.
\end{eqnarray}

\begin{Corollary} \label{cor:Ecomps}
We have
$$\sE^{(r_2)}(\l+r_1) * \sE^{(r_1)}(\l-r_2) \cong \sE^{(r_1+r_2)}(\l) \otimes_\k H^\star(\bG(r_1,r_1+r_2)).$$
This relation also holds if we replace $\sE$ by $\sF$.
\end{Corollary}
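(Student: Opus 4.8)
The plan is to bootstrap from the already-established case $r_1=1$ in Proposition \ref{prop:theta_k} (namely $\sE * \sE^{(r)} \cong \sE^{(r+1)} \otimes_\k H^\star(\p^r)$) to the general two-divided-power composition by induction on $\min(r_1, r_2)$, using the cancellation laws (\ref{eq:A}) and (\ref{eq:B}) that hold because each graded piece of every $\Hom$ space is finite dimensional and the category of coherent sheaves is idempotent complete. The base case $r_1 = 1$ (equivalently $r_2 = 1$, which is obtained symmetrically from the right-multiplication statement $\sE^{(r)} * \sE \cong \sE^{(r+1)} \otimes_\k H^\star(\p^r)$ in Proposition \ref{prop:theta_k}) is exactly what is proved there, so there is nothing to do in that case.

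For the inductive step I would take $r_1, r_2 \ge 2$ and compute $\sE^{(r_2)}(\l+r_1) * \sE^{(r_1)}(\l - r_2)$ in two ways by splitting off one divided power. Using the base case to write $\sE^{(r_1)}(\l - r_2)$ as a summand of $\sE(\l - r_2 + 2r_1 - 1)\cdots$, more precisely using $\sE * \sE^{(r_1 - 1)} \cong \sE^{(r_1)} \otimes_\k H^\star(\p^{r_1 - 1})$, one rewrites
$$\sE^{(r_2)} * \sE^{(r_1)} \otimes_\k H^\star(\p^{r_1-1}) \cong \sE^{(r_2)} * \sE * \sE^{(r_1-1)} \cong (\sE^{(r_2)} * \sE) * \sE^{(r_1-1)} \cong \sE^{(r_2+1)} * \sE^{(r_1-1)} \otimes_\k H^\star(\p^{r_2}),$$
where the middle isomorphism is associativity of convolution and the last uses the $r_1 \leftrightarrow 1$ case again. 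Now $\sE^{(r_2+1)} * \sE^{(r_1-1)}$ has smaller $\min$, so by the inductive hypothesis it is $\sE^{(r_1+r_2)} \otimes_\k H^\star(\bG(r_1-1, r_1+r_2))$. Combining, $\sE^{(r_2)} * \sE^{(r_1)} \otimes_\k H^\star(\p^{r_1-1})$ is isomorphic to $\sE^{(r_1+r_2)} \otimes_\k \bigl( H^\star(\bG(r_1-1,r_1+r_2)) \otimes H^\star(\p^{r_2}) \bigr)$. Since $\dim H^\star(\bG(r_1-1,r_1+r_2)) \cdot (r_2+1) = \binom{r_1+r_2}{r_1-1}(r_2+1) = \binom{r_1+r_2}{r_1} r_1 = \dim H^\star(\bG(r_1,r_1+r_2)) \cdot r_1$, and $\dim H^\star(\p^{r_1-1}) = r_1$, the cancellation law (\ref{eq:B}) (tensoring with $\k^{r_1}$) lets me conclude $\sE^{(r_2)} * \sE^{(r_1)} \cong \sE^{(r_1+r_2)} \otimes_\k \k^{\binom{r_1+r_2}{r_1}}$, i.e. the claimed decomposition with the correct multiplicity space $H^\star(\bG(r_1,r_1+r_2))$ (which as a graded vector space is determined by its graded dimension here — one should match the internal gradings, which is where the shifts in the definition of $H^\star$ of a Grassmannian are arranged precisely so that the Poincaré polynomials multiply correctly, the $q$-binomial identity $\qbinom{r_1+r_2}{r_1-1}[r_2+1] = \qbinom{r_1+r_2}{r_1}[r_1]$).

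The $\sF$ version follows formally: either repeat the identical argument with $\sF$ in place of $\sE$ (Proposition \ref{prop:theta_k} explicitly grants the analogous base-case statement for $\sF$), or apply the adjunction $\sE^{(r)}_R = \sF^{(r)}[r\l]\{-r\l\}$ from condition (ii) of a geometric categorical $\sl_2$ action to transpose the $\sE$-statement, using that taking right adjoints of Fourier–Mukai kernels reverses convolution and is additive, together with the self-duality $H^\star(\bG(r_1,r_1+r_2))$ is symmetric about degree zero.

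The step I expect to be the main obstacle is not the counting — that is just the standard $q$-binomial identity — but verifying that the abstract isomorphism produced by cancellation actually respects the internal $\Z$-grading (the $\{\cdot\}$ and $[\cdot]$ shifts), so that one genuinely gets $H^\star(\bG(r_1,r_1+r_2))$ as the multiplicity space and not merely a vector space of the right total dimension. This requires keeping careful track of the cohomological and equivariant shifts through each associativity rearrangement and each application of Proposition \ref{prop:theta_k}, and invoking the graded refinement of the Krull–Schmidt cancellation laws (\ref{eq:A})–(\ref{eq:B}); it is a bookkeeping issue rather than a conceptual one, but it is the only place where something could go wrong.
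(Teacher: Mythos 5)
Your argument is correct and reaches the conclusion by a mildly different route from the paper's, though both bootstrap from Proposition~\ref{prop:theta_k} and finish with a graded Krull--Schmidt cancellation. The paper's proof expands both $\sE^{(r_1)}(\l-r_2)$ and $\sE^{(r_2)}(\l+r_1)$ \emph{all the way down} into a product of $r_1+r_2$ single $\sE$'s, reads the resulting object off two ways, namely as $\sE^{(r_1+r_2)}(\l)\otimes_\k H^\star(\Fl_{r_1+r_2})$ and as $\sE^{(r_2)}(\l+r_1)*\sE^{(r_1)}(\l-r_2)\otimes_\k H^\star(\Fl_{r_1})\otimes_\k H^\star(\Fl_{r_2})$, and then cancels using the cohomological factorization $H^\star(\Fl_{r_1+r_2})\cong H^\star(\Fl_{r_1})\otimes_\k H^\star(\bG(r_1,r_1+r_2))\otimes_\k H^\star(\Fl_{r_2})$. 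You peel off one $\sE$ at a time and induct; this is a local rephrasing of the same idea, with the $q$-binomial bookkeeping distributed over the inductive steps rather than packaged once as a flag-variety identity. The paper's version is somewhat tidier (one expansion, one cancellation); yours is more explicit about the transfer of a single $\sE$ across the product. On your final worry about the cancellation respecting the internal $\{\cdot\}$/$[\cdot]$ grading: that concern applies verbatim to the paper's own proof, which cancels the graded space $H^\star(\Fl_{r_1})\otimes_\k H^\star(\Fl_{r_2})$, and in both cases the justification is the uniqueness of decomposition into indecomposables from \cite{Ri}; note that the cancellation law (\ref{eq:B}) as printed only addresses ungraded $\k^n$, so what both proofs actually invoke is the straightforward graded refinement of that statement.

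There is one genuine but easily repaired slip: as written, your induction is on $\min(r_1,r_2)$, but the reduction you perform, from $\sE^{(r_2)}*\sE^{(r_1)}$ to $\sE^{(r_2+1)}*\sE^{(r_1-1)}$, does not always decrease $\min(r_1,r_2)$. When $r_1\ge r_2+2$ the new pair has $\min(r_2+1,r_1-1)=r_2+1>r_2$, so the descent fails. The simplest fix is to induct on $r_1$ alone: your reduction always lowers $r_1$ by exactly one, and the base cases $r_1\in\{0,1\}$ are trivial or given by Proposition~\ref{prop:theta_k}. Alternatively, when $r_1>r_2$ you can instead split a single $\sE$ off the $\sE^{(r_2)}$ factor, which reduces to $\sE^{(r_2-1)}*\sE^{(r_1+1)}$ and does decrease the minimum.
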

\begin{proof}
Applying the isomorphism from Proposition \ref{prop:theta_k} repeatedly we find that 
\begin{equation*}
\sE(\l+r_1+r_2-1) * \dots * \sE(\l-r_1-r_2+1) \cong \sE^{(r_1+r_2)}(\l) \otimes_\k H^\star(\Fl_{r_1+r_2})
\end{equation*}
where $\Fl_{r_1+r_2}$ denotes the complete flag of $\C^{r_1+r_2}$. Similarly, one finds that 
$$\sE(\l+r_1-r_2-1) * \dots * \sE(\l-r_1-r_2+1) \cong \sE^{(r_1)}(\l-r_2) \otimes_{\k} H^\star(\Fl_{r_1})$$
and
$$\sE(\l+r_1+r_2-1) * \dots * \sE(\l+r_1-r_2+1) \cong \sE^{(r_2)}(\l+r_1) \otimes_{\k} H^\star(\Fl_{r_2}).$$
Thus 
$$\sE^{(r_1+r_2)}(\l) \otimes_\k H^\star(\Fl_{r_1+r_2}) \cong \sE^{(r_2)}(\l+r_1) * \sE^{(r_1)}(\l-r_2) \otimes_\k H^\star(\Fl_{r_1}) \otimes_\k  H^\star(\Fl_{r_2}).$$
But 
$$H^\star(\Fl_{r_1+r_2}) \cong H^\star(\Fl_{r_1}) \otimes_{\k} H^\star(\bG(r_1,r_2+r_2)) \otimes_{\k} H^\star(\Fl_{r_2})$$
and $D(Y(\l-r_1-r_2) \times Y(\l+r_1+r_2))$ is idempotent complete so we can cancel to get our result. The analogous claim for $\sF$'s follows by taking adjoints. 
\end{proof}

\begin{Corollary} \label{cor2:EF=FE}
If $-\l - a + b \ge 0$ we have
$$\sF^{(b)}(\l+2a-b) * \sE^{(a)}(\l+a) \cong \bigoplus_{j \ge 0} \sE^{(a-j)} * \sF^{(b-j)} \otimes_{\k} H^\star(\bG(j, -\l-a+b))$$
where on the right-hand side $\End(\sE^{(a-j)} * \sF^{(b-j)}) \cong \k \cdot I$. Also $\End(\sE^{(a)}(\l-2b+a) * \sF^{(b)}(\l-b)) = \k \cdot I$.

Similarly, if $\l - a + b \ge 0$ then we have 
$$\sE^{(b)}(\l-2a+b) * \sF^{(a)}(\l-a) \cong \bigoplus_{j \ge 0} \sF^{(a-j)} * \sE^{(b-j)} \otimes_{\k} H^\star(\bG(j, \l-a+b))$$
where on the right-hand side $\End(\sF^{(a-j)} * \sE^{(b-j)}) \cong \k \cdot I$. Also $\End(\sF^{(a)}(\l+2b-a) * \sE^{(b)}(\l+b)) \cong \k \cdot I.$
\end{Corollary}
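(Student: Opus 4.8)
The plan is to read the answer off from representation theory and then upgrade the standard bookkeeping to the level of kernels. The shape of the claimed decomposition mirrors the $U_q(\sl_2)$ identity expressing $F^{(b)}E^{(a)}$ on the weight-$\l$ space in terms of the $E^{(a-j)}F^{(b-j)}$, whose coefficients are the Gaussian binomials $\left[ {-\l-a+b \atop j} \right] = \dim_q H^\star(\bG(j,-\l-a+b))$; so only the divided-power combinatorics remains. I would prove it by induction on $a+b$, the sole inputs being Proposition \ref{prop2:EF=FE} (the case $a=b=1$), the single-step peeling $\sE*\sE^{(r)}\cong\sE^{(r+1)}\otimes_\k H^\star(\p^r)$ of Proposition \ref{prop:theta_k} and its iterate Corollary \ref{cor:Ecomps}, and the cancellation laws \eqref{eq:A}, \eqref{eq:B}, available because the categories $D(Y(\mu)\times Y(\nu))$ are idempotent complete with finite-dimensional $\Hom$'s. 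Throughout I treat the case $-\l-a+b\ge 0$; the case $\l-a+b\ge 0$ is symmetric, interchanging the roles of $\sE$ and $\sF$.

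The base cases are $a=0$ or $b=0$, which are trivial since $\sE^{(0)}=\sF^{(0)}=\O_\Delta$ and $\bG(0,n)$ is a point, together with the ``reflection'' boundary $-\l-a+b=0$, where the right-hand side collapses to its $j=0$ summand and the claim becomes the exact commutation $\sF^{(b)}*\sE^{(a)}\cong\sE^{(a)}*\sF^{(b)}$. This boundary case I would handle by peeling the divided powers apart via Corollary \ref{cor:Ecomps} and iterating the instance of Proposition \ref{prop2:EF=FE} at the self-dual weight $\mu=0$, where $\sF(1)*\sE(1)\cong\sE(-1)*\sF(-1)$ already gives commutation with no correction term.

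For the inductive step with $a,b\ge 1$ and $-\l-a+b\ge 1$, I would use Corollary \ref{cor:Ecomps} to peel a single $\sE$ off the bottom of $\sE^{(a)}$ and a single $\sF$ off the top of $\sF^{(b)}$, so that $\sF^{(b)}*\sE^{(a)}$, tensored with $H^\star(\p^{a-1})\otimes_\k H^\star(\p^{b-1})$, becomes $\sF^{(b-1)}*\sF*\sE*\sE^{(a-1)}$. Applying Proposition \ref{prop2:EF=FE} to the bare central $\sF*\sE$, then the inductive hypothesis to the shorter composites $\sF^{(b-1)}*\sE$, $\sF*\sE^{(a-1)}$ and $\sF^{(b-1)}*\sE^{(a-1)}$ that appear (all with parameter sum $<a+b$), and finally recombining the divided powers of $\sE$ and $\sF$ back up via Corollary \ref{cor:Ecomps}, one is left to identify the accumulated tensor products of Grassmannian cohomologies with $H^\star(\bG(j,-\l-a+b))\otimes_\k H^\star(\p^{a-1})\otimes_\k H^\star(\p^{b-1})$. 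This is exactly the $q$-Vandermonde identity, which I would realize geometrically through the Schubert-cell fibrations giving $H^\star(\bG(j,n))\cong\bigoplus_i H^\star(\bG(i,n'))\otimes_\k H^\star(\bG(j-i,n-n'))$ up to internal grading shifts; the spurious factors $H^\star(\p^{a-1})$ and $H^\star(\p^{b-1})$ are then stripped off using \eqref{eq:B}.

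The endomorphism statements I would fold into the same induction, proving them \emph{before} the decomposition at a given value of $a+b$ to avoid circularity: biadjunction rewrites $\End(\sE^{(a)}*\sF^{(b)})$ as a $\Hom$-space of the form $\Hom(\sF^{(b)},\,\sF^{(a)}*\sE^{(a)}*\sF^{(b)})$ (up to shifts), and expanding $\sF^{(a)}*\sE^{(a)}$ by the already-established smaller instances of the corollary, together with Lemma \ref{lem:supp} and the standing fact $\End(\sE^{(r)}(\l))\cong\k\cdot I$, forces this space to be one-dimensional; pairwise non-isomorphism of the summands $\sE^{(a-j)}*\sF^{(b-j)}$ follows because their supports are correspondences of strictly decreasing dimension, and Krull--Schmidt then gives uniqueness. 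The main obstacle is not any individual step but the bookkeeping of the signs of the \emph{intermediate} weights: each use of Proposition \ref{prop2:EF=FE} requires knowing whether the weight of the space in the middle is $\le 0$ or $\ge 0$, since that dictates which of the two forms of the relation holds and hence the direction in which the extra $\O_\Delta\otimes H^\star(\p^\cdot)$ summand is attached. The hypotheses $-\l-a+b\ge 0$ and $\l-a+b\ge 0$ are precisely the conditions keeping all the intermediate weights in a consistent range, and verifying this, along with the $q$-Vandermonde reindexing, is where the actual work lies.
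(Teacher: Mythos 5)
The paper offers no proof of this corollary: its ``proof'' consists of the single sentence that the result ``follows formally from Proposition \ref{prop2:EF=FE}'' together with a pointer to Lemma 4.2 of \cite{ckl2}, so there is no detailed argument here to compare against. Your overall plan --- expand divided powers via Corollary \ref{cor:Ecomps}, commute single $\sE$'s past $\sF$'s using Proposition \ref{prop2:EF=FE}, recombine, and handle the cohomological bookkeeping with $q$-Vandermonde together with the cancellation laws \eqref{eq:A}, \eqref{eq:B} --- is the kind of formal deduction the paper is alluding to, and done carefully it works.

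However the step you yourself flag as ``where the actual work lies'' contains a concrete error. You assert that the hypothesis $-\l-a+b\ge 0$ keeps all the intermediate weights in a consistent sign range, so that only one direction of Proposition \ref{prop2:EF=FE} is ever invoked, and that the boundary case $-\l-a+b=0$ reduces to instances ``at the self-dual weight $\mu=0$.'' Neither is true. Take $a=b=2$, $\l=0$, so $-\l-a+b=0$. Peeling gives, up to tensoring with $H^\star(\p^1)\otimes_\k H^\star(\p^1)$, the composite $\sF(1)*\sF(3)*\sE(3)*\sE(1)$, and the adjacent central pair $\sF(3)*\sE(3)$ is $\sF(\mu+1)*\sE(\mu+1)$ at $\mu=2>0$. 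At such a weight Proposition \ref{prop2:EF=FE} reads the ``wrong way'': it exhibits $\sF(3)*\sE(3)$ merely as the Krull--Schmidt complement of $\O_\Delta\otimes_\k H^\star(\p^1)$ inside $\sE(1)*\sF(1)$, rather than decomposing $\sF(3)*\sE(3)$ outright; and continuing the calculation, the iterated commutations then pass through $\mu=2,0,-2$, not just $\mu=0$. More generally your inductive step always produces a central $\sF*\sE$ at weight $\l+2a-2$, which can be positive under $-\l-a+b\ge0$ as soon as $b\ge 2$. This is repairable --- the finiteness of $\Hom$'s and the cancellation laws \eqref{eq:A}, \eqref{eq:B} do let one peel off the correction summand and identify the complement --- but the argument must alternate between adding correction terms on one side and cancelling them on the other, which your sketch does not account for. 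As written the inductive step does not close.
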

\begin{proof}
We only state this result for completeness since we will not really use it. It follows formally from Proposition \ref{prop2:EF=FE} (see Lemma 4.2 of \cite{ckl2} for a sketch of the proof). 
\end{proof}

We end with the following Lemma which proves the last condition for having a strong categorical $\sl_2$ action.
\begin{Lemma}\label{lem:homs2}
$\Ext^i(\sE^{(r)}, \sE^{(r)}\{j\}) = 0$ for $i < 0$ (and any $j \in \Z$) while $\End(\sE^{(r)}) \cong \k \cdot I$. The same holds with $\sF$'s instead of $\sE$'s. 
\end{Lemma}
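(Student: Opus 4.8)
The plan is to reduce everything to the case $r=1$ using Corollary \ref{cor:Ecomps} and the cancellation laws \eqref{eq:A}, \eqref{eq:B}, and then to handle $r=1$ directly by an adjunction computation. First I would record the key reduction: by Corollary \ref{cor:Ecomps}, for any $r$ there is an isomorphism
$$\sE(\l+r-1) * \sE(\l+r-3) * \cdots * \sE(\l-r+1) \cong \sE^{(r)}(\l) \otimes_\k H^\star(\Fl_r),$$
where $\Fl_r$ is the complete flag variety of $\C^r$. Hence
$$\Ext^i(\sE^{(r)} \otimes_\k H^\star(\Fl_r), \sE^{(r)} \otimes_\k H^\star(\Fl_r)\{j\}) \cong \Ext^i\big(\sE(\l+r-1)*\cdots*\sE(\l-r+1),\ \sE(\l+r-1)*\cdots*\sE(\l-r+1)\{j\}\big),$$
and since tensoring with the (positively graded, one-dimensional-in-degree-zero) algebra $H^\star(\Fl_r)$ only adds non-negative shifts, vanishing of $\Ext^{<0}$ for the big composite forces $\Ext^{<0}(\sE^{(r)},\sE^{(r)}\{j\})=0$; likewise $\End$ of the composite being computed from $\End(\sE^{(r)})\otimes_\k H^\star(\Fl_r)\otimes_\k H^\star(\Fl_r)$ together with $\End = \k\cdot I$ for the composite pins down $\End(\sE^{(r)})\cong \k\cdot I$ by the cancellation law \eqref{eq:B}. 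So it suffices to prove the statement for the iterated composite $\sE(\l+r-1)*\cdots*\sE(\l-r+1)$, equivalently (by induction, peeling off one $\sE$ at a time) it suffices to understand $\Ext^\bullet$ between composites of the form $\sE * \sG$ and $\sE * \sG$.

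Next I would do the adjunction manipulation. Writing $\sG := \sE(\l+r-3)*\cdots*\sE(\l-r+1)$, we have
$$\Ext^i(\sE * \sG,\ \sE * \sG\{j\}) \cong \Ext^i(\sG,\ \sE_R * \sE * \sG\{j\}),$$
and $\sE_R = \sF[\,\cdot\,]\{\cdot\}$ by condition (ii) of the geometric categorical action, so $\sE_R * \sE \cong \sF * \sE$ up to shift. Applying Proposition \ref{prop2:EF=FE} (and its iterated consequence Corollary \ref{cor2:EF=FE}) rewrites $\sF * \sE * \sG$ as a direct sum of terms $\sE^{(a-j')}*\sF^{(b-j')}*\sG'$ tensored with cohomology of Grassmannians; here the cohomology factors only contribute non-negative shifts. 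Repeating this, one lands on a sum of terms of the form (composite of $\sF^{(\bullet)}$'s followed by $\sE^{(\bullet)}$'s applied to $\sG$), and the upshot is that $\Ext^{<0}$ vanishes provided one knows the base vanishing statement $\Hom(\O_{Y(\l)}, \mathcal{H}\{j\}) $ lives in non-negative cohomological degrees for the relevant sheaves $\mathcal H$, which reduces — using that all the $\sE^{(r)},\sF^{(r)}$ are genuine sheaves (condition (vii)) and that the convolutions we are taking are again concentrated in non-negative degrees by the formality Propositions \ref{prop:theta_k} and \ref{prop2:EF=FE} — to the finite-dimensionality/positivity input of condition (i), i.e. $\End(\O_{Y(\l)}) = \k\cdot I$ and $\Hom$-spaces are finite dimensional with the grading bounded below. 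For the $\End = \k \cdot I$ half, the same adjunction chain terminates at $\End(\O_{Y(\l)}) = \k\cdot I$, and one checks that at every step exactly one copy of the trivial summand survives in cohomological degree zero (all other summands contribute strictly positive shifts), so the composite has a one-dimensional degree-zero endomorphism space; then \eqref{eq:B} descends this to $\sE^{(r)}$.

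The statement for $\sF$ in place of $\sE$ follows by taking left or right adjoints: $\Ext^i(\sF^{(r)},\sF^{(r)}\{j\}) \cong \Ext^i(\sF^{(r)}_R, \sF^{(r)}_R\{j\})$ and $\sF^{(r)}_R = \sE^{(r)}$ up to an overall shift by (ii), which does not affect the vanishing range or the degree-zero part.

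I expect the main obstacle to be bookkeeping the shifts: one must verify that throughout the adjunction/Proposition \ref{prop2:EF=FE} rewriting, every extra summand that appears is shifted into strictly positive cohomological (and internal) degree, so that the \emph{only} contribution to $\Ext^{\le 0}$ comes from the ``leading'' term whose $\Ext$ reduces to $\End(\O_{Y(\l)})$ or to the known vanishing. The support condition (vii) (via Lemma \ref{lem:supp}) is what guarantees that the unwanted cross-terms like $\sE^{(2)} * \sF$ carry no copy of the diagonal, and hence genuinely do not interfere; making sure this is invoked at each of the finitely many reduction steps — rather than just at the first — is the delicate point. A secondary subtlety is that one is working with honest (possibly non-formal) objects until Propositions \ref{prop:theta_k} and \ref{prop2:EF=FE} are applied, so the inductive hypothesis has to be stated carefully enough that the $\Ext$-computation can be run before formality of the larger composites is known.
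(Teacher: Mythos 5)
The proposal has a genuine gap at the very first reduction step, which invalidates the whole strategy. You claim that since $H^\star(\Fl_r)$ is ``positively graded, one-dimensional-in-degree-zero,'' tensoring only adds non-negative shifts, so vanishing of $\Ext^{<0}$ for the iterated composite $\sE(\l+r-1)*\cdots*\sE(\l-r+1) \cong \sE^{(r)}(\l)\otimes_\k H^\star(\Fl_r)$ forces $\Ext^{<0}(\sE^{(r)},\sE^{(r)}\{j\})=0$. But in this paper the convention (stated at the start of Section~\ref{sec:maindefs}) is that $H^\star$ is shifted to be \emph{symmetric about degree zero}, so $H^\star(\Fl_r)$ is supported in both positive and negative degrees. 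As a consequence, the composite actually \emph{does} have nonzero $\Ext^{<0}$: already for $r=2$, writing $\sE*\sE\cong \sE^{(2)}\la-1\ra\oplus\sE^{(2)}\la1\ra$, the summand $\Hom(\sE^{(2)}\la-1\ra,\sE^{(2)}\la1\ra\{2\}[-2])\cong\End(\sE^{(2)})=\k$ shows that $\Ext^{-2}(\sE*\sE,\sE*\sE\{2\})\neq 0$. So the intermediate claim you need (``vanishing of $\Ext^{<0}$ for the big composite'') is simply false, and the implication you want to run cannot be started. The same symmetric-grading issue contaminates the later step where you say the $H^\star(\bG(j,\cdot))$ factors appearing in Corollary~\ref{cor2:EF=FE} ``only contribute non-negative shifts.''

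The paper's proof sidesteps this entirely: it never replaces $\sE^{(r)}$ by the $r$-fold composite of $\sE$'s. Instead it applies adjunction \emph{directly} to $\sE^{(r)}$, using $\sE^{(r)}_R=\sF^{(r)}[r\l]\{-r\l\}$ to rewrite $\Ext^i(\sE^{(r)},\sE^{(r)})$ as $\Ext^i$ of $\sF^{(r)}*\sE^{(r)}$ against $\O_\Delta$, then expands $\sF^{(r)}*\sE^{(r)}$ with Corollary~\ref{cor2:EF=FE}, and carefully tracks the explicit homological shifts $[(\l+j)(2r-j)]$ together with the degree support of $H^\star(\bG(j,-\l))$ to show that for $i\le 0$ only the $j=r$ summand contributes, reducing by induction to $\End(\O_\Delta)=\k\cdot I$. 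That degree-bookkeeping is exactly where the content of the lemma lies, and it is done at the level of $\sE^{(r)}$, not the iterated composite. If you want to salvage your approach, you would have to show not that the composite's $\Ext^{<0}$ vanishes, but that it is exactly accounted for by the $H^\star(\Fl_r)\otimes H^\star(\Fl_r)$ factor applied to $\End(\sE^{(r)})=\k$ — a considerably more delicate statement that would in effect require knowing the conclusion of the lemma in advance.
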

\begin{proof}
Notice that the vanishing for $i < 0$ is immediate since every $\sE^{(r)}$ is a sheaf. However, we can avoid using this fact and proceed by induction, reducing everything to the case when $r=0$ (i.e. to the fact that $\End(\O_\Delta)) \cong \k \cdot I$). 

Suppose that $\l \le 0$ ($\l \ge 0$ is done similarly). We will ignore the $\{ \cdot \}$ shifts for notational simplicity. Then 
\begin{align*}
\Ext^i(&\sE^{(r)}(\l+r), \sE^{(r)}(\l+r))  
\cong \Ext^i(\sF^{(r)}(\l+r) * \sE^{(r)}(\l+r) [-r(\l+r)], \O_\Delta) \\
&\cong \Ext^i(\bigoplus_{j \ge 0} \sE^{(r-j)} * \sF^{(r-j)}(\l-r+j) \otimes_{\k} H^\star(\bG(j, -\l)), \O_\Delta [r(\l+r)]) \\
&\cong \bigoplus_{j \ge 0} \Ext^i(\sF^{(r-j)}, \sF^{(r-j)}(\l-r+j) [(r-j)(\l-r+j)] \otimes_{\k} H^\star(\bG(j, -\l)) [r(\l+r)]) \\
&\cong \bigoplus_{j \ge 0} \Ext^i(\sE^{(r-j)}, \sE^{(r-j)} \otimes_{\k} H^\star(\bG(j, -\l)) [(\l+j)(2r-j)]) 
\end{align*}
where we used Corollary \ref{cor2:EF=FE} to obtain the second isomorphism. Using that $0 \le j \le r$ one can show that the right hand side lies in negative degrees unless $j=r$ in which case we get one term in degree zero. So, by induction, if $i < 0$ this vanishes and if $i=0$ we get $\Hom(\O_\Delta, \O_\Delta)$ which is one-dimensional. 

Since $\sF$'s are adjoint to $\sE$'s (up to a shift) the same holds if we replace all the $\sE$'s by $\sF$'s. 
\end{proof}

\section{Proof of nil affine Hecke relations} \label{se:proofofnilHecke}

In this section we prove the following result which, together with the results in section \ref{se:formality}, prove the main Theorem \ref{th:main}. 

\begin{Theorem}\label{th:nilHeckerels}
Given a geometric categorical $\sl_2$ action there exist morphisms
$$X(\l): \sE(\l)\la-1\ra \rightarrow \sE(\l)\la1\ra$$ 
and 
$$T(\l): \sE(\l+1) * \sE(\l-1) \la1\ra \rightarrow \sE(\l+1) * \sE(\l-1) \la-1\ra$$ 
satisfying the nil affine Hecke relations 
\begin{enumerate}
\item $T(\l)^2 = 0$
\item $(I * T(\l-1)) \circ (T(\l+1) * I) \circ (I * T(\l-1)) = (T(\l+1) * I) \circ (I * T(\l-1)) \circ (T(\l+1) * I)$ as morphisms $ \sE(\lambda + 2) * \sE(\lambda) * \sE(\lambda -2)\la3\ra \rightarrow \sE(\lambda + 2) * \sE(\lambda) * \sE(\lambda -2)\la-3\ra $.
\item $(X(\l+1) * I) \circ T(\l) - T(\l) \circ (I * X(\l-1)) = I = - (I * X(\l-1)) \circ T(\l) + T(\l) \circ (X(\l+1) * I)$ as morphisms $ \sE(\lambda +1) * \sE(\lambda -1) \rightarrow \sE(\lambda +1) * \sE(\l-1) $.
\end{enumerate}
The freedom in choosing such $X$s and $T$s is parametrized by 
$$V(-1)^{tr} \times V(-2)^{tr} \times \k^\times \cong V(1)^{tr} \times V(2)^{tr} \times \k^\times$$
where $V(\l)^{tr} \subset \Hom(\O_\Delta \la-1\ra, \O_\Delta \la1\ra)$ denotes the linear subspace of transient maps defined below. 
\end{Theorem}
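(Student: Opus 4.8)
The plan is to construct $X(\l)$ and $T(\l)$ from the deformation-theoretic data, using the formality results of section \ref{se:formality} to control them. First I would define $X(\l)$: by the discussion of "some deformation theory," the deformation $\tY(\l)$ of $Y(\l)$ produces, for each object $\sG$, a canonical-up-to-scalar connecting map $\alpha_\sG : \sG\la-1\ra \to \sG\la 1\ra$ with $\Cone(\alpha_\sG) \cong i^*i_*\sG\{1\}$. Applying this to the kernel $\sE(\l)$ (convolved against $\Delta$, or intrinsically on the product $Y(\l-1)\times Y(\l+1)$ using the deformation $\tY(\l-1)\times_{\bA^1}\tY(\l+1)$) gives a candidate $X(\l): \sE(\l)\la-1\ra\to\sE(\l)\la1\ra$, which is exactly the "obstruction to deforming $\sE(\l)$" alluded to in the introduction. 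The freedom here is the $\k^\times$-scaling coming from the non-canonical trivialization $N_{Y/\tY}\cong\O_Y\{2\}$, together with the possibility of adding a "transient" correction — a map $\sE(\l)\la-1\ra\to\sE(\l)\la1\ra$ that factors through an object supported on a smaller locus (one of the deeper strata $\supp(\sE^{(2)})$), detected via the $V(\l)^{tr}$ subspaces of $\Hom(\O_\Delta\la-1\ra,\O_\Delta\la1\ra)$. I would show, using Lemma \ref{lem:homs2} (so that $\End(\sE(\l))=\k\cdot I$) together with the support condition (vii) and Lemma \ref{lem:supp}, that these transient maps plus scaling exhaust all the ambiguity, giving the parametrization by $V(-1)^{tr}\times\k^\times$ for the $X$ part, and symmetrically for $Y(1)$ via the $t$-equivalence $\D(\l)\cong\D(-\l)$ implicit in the setup.

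Next I would construct $T(\l)$. By Proposition \ref{prop:theta_k} we have $\sE(\l+1)*\sE(\l-1)\cong \sE^{(2)}(\l)\la-1\ra\oplus\sE^{(2)}(\l)\la1\ra$, and the maps $\iota,\pi$ (unique up to scalar by the Remark after Proposition \ref{prop:theta_k}, using Lemma \ref{lem:homs2}) identify the two summands with the lowest and highest cohomology. The natural nilpotent endomorphism $T(\l)$ of $\sE(\l+1)*\sE(\l-1)\la1\ra$ should be the composite $\iota\circ\pi$ (suitably shifted), i.e. project onto the top $\sE^{(2)}$ summand and include it into the bottom — this is manifestly square-zero, giving relation (i) for free. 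The correction terms / scaling freedom live in $\Hom$ spaces which, by Lemma \ref{lem:homs2} and the support arguments, reduce to $V(-2)^{tr}$ (maps factoring through $\sE^{(3)}$) plus a $\k^\times$ scalar; I would pin down that this plus the $X$-data accounts for the full $V(-1)^{tr}\times V(-2)^{tr}\times\k^\times$.

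Then comes the verification of the remaining nil affine Hecke relations (ii) and (iii). Relation (iii) relates $X,T$ on $\sE(\l+1)*\sE(\l-1)$: I would verify it by computing both sides as endomorphisms and using that $\Hom(\sE(\l+1)*\sE(\l-1)\la1\ra,\sE(\l+1)*\sE(\l-1)\la-1\ra)$ is small — decomposing via Proposition \ref{prop:theta_k} into $\Hom$'s between $\sE^{(2)}$'s, which by Lemma \ref{lem:homs2} are controlled in low degree, so the relation can be checked "on components." The key input is that $X$ really is the deformation obstruction: on the summand $i^*i_*$-cone picture, $X$ acts as the connecting map, and its interaction with the formality isomorphism $\iota$ is forced. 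Relation (ii), the braid relation on $\sE(\l+2)*\sE(\l)*\sE(\l-2)$, I would similarly reduce: $\sE(\l+2)*\sE(\l)*\sE(\l-2)\cong\sE^{(3)}(\l)\otimes H^\star(\Fl_3)$ by Corollary \ref{cor:Ecomps}, so both sides live in a $\Hom$ space between shifts of $\sE^{(3)}$'s, and Lemma \ref{lem:homs2} again bounds this enough that the relation is determined by a finite check, the relevant endomorphism algebra being forced to be (a piece of) the nilHecke algebra by its graded dimension.

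The main obstacle, I expect, is not relation (i) (which is formal) but showing that $X$ and $T$ as defined above actually satisfy (iii), and especially the graded-dimension / $\Hom$-space computations that make the "check on components" rigorous — in particular proving that the relevant $\Ext$-groups between the $\sE^{(r)}$'s in the pertinent degrees are exactly the size predicted by the nilHecke algebra (no more, no less), so that a relation known to hold on cohomology actually lifts to a relation of honest morphisms. This is where the deformation $\tY(\l)$ is essential: it is what upgrades the cohomology-level statements of conditions (iii)–(v) to genuine morphism identities, via the cone descriptions in section \ref{se:formality}. The sign/normalization bookkeeping in $X(\l+1)*I - T$ vs $T - I*X(\l-1)$ will require care but is routine once the $\Hom$-spaces are understood.
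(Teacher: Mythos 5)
Your overall architecture (build $X$ from the deformation connecting map, build $T$ as the off-diagonal ``project-to-top, include-at-bottom'' endomorphism, verify relation (iii) last) matches the paper's, and your treatment of relations (i) and (ii) is roughly right: (i) is indeed automatic, and (ii) does reduce to a one-dimensional $\Hom$ space by Corollary \ref{cor:Ecomps} together with Lemma \ref{lem:homs2}. However, there is a serious gap in your plan for relation (iii), which is the heart of the theorem.

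You propose to verify (iii) by ``computing both sides as endomorphisms and using that the $\Hom$ space is small.'' This does not work, because $\Hom(\sE^{(2)}(\l)\la-1\ra,\sE^{(2)}(\l)\la1\ra)$ is not small: by Proposition \ref{prop:E22maps} it contains at least two independent non-transient directions ($\theta(\l+2)$ and $\theta(\l-2)$) plus the whole transient subspace, so the relation is emphatically not forced by degree counting. The missing idea is that the $X(\l)$'s cannot all be taken to be the ``obvious'' deformation class: one must write $X(\l+1)=(a(\l+1)\theta(\l+2),\, b(\l+1)\theta(\l))+\text{(transient)}$ and then \emph{choose} the scalars by an induction on $\l$ starting from $X(-N+1)$, setting $b(\l+1)=-a(\l-1)$, and proving at each step that there is a unique nonzero $a(\l+3)$ making the off-diagonal entries of $I*X(\l+1)$ and $X(\l+3)*I$ cancel (the paper's Proposition \ref{prop:defX}). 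That existence and nonvanishing argument is not a formal degree check; it uses the vanishing of $X(\l+3)*X(\l+1)*X(\l-1)$ on the cohomology of a triple convolution and Lemma \ref{lem:Bnotzero} to propagate nonvanishing. Without that recursion there is no reason for your candidate $X$'s and $T$'s to satisfy (iii), and indeed $T$ itself cannot be specified ``intrinsically'' as $\iota\circ\pi$ up to scale --- its normalization $-B^{-1}$ is tied to the chosen $X(\l-1)$ via the component $B$ of $I*X(\l-1)$, and the compatibility of that normalization across all $\l$ is exactly what the induction proves.

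Two further points worth correcting. First, your working definition of ``transient'' (a map factoring through an object supported on a smaller locus) is not the paper's: a transient map is a map $\phi:\O_{\Delta(\l)}\la-1\ra\to\O_{\Delta(\l)}\la1\ra$ with the property that when $(\phi,0)$ acting on $\sE$ is re-expressed as a map from the other side, it produces no $\theta$ component --- a ``sliding'' condition, not a support condition. Second, the parametrization $V(-1)^{tr}\times V(-2)^{tr}\times\k^\times$ comes from (a) the $\k^\times$ scaling of the initial $X(-N+1)$ and (b) the transient corrections one is free to add to $X(0)$ and to $X(\pm1)$ when propagating relation (iii) exactly (not just modulo transients); it is not a mixture of $X$-freedom and $T$-freedom in the way your sketch suggests.
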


In this section, we use the notation $ \la k \ra $ for $ [k]\{-k\} $.

\subsection{Definition of $\theta(\l)$}

If we have a geometric categorical $\k^\times$-equivariant $\sl_2$ action then $\tY(\l)$ induces two deformations 
$$i_1: Y(\l) \times Y(\l) \rightarrow \tY(\l) \times Y(\l) \text{ and } i_2: Y(\l) \times Y(\l) \rightarrow Y(\l) \times \tY(\l).$$
The connecting morphism for 
$$\O_\Delta [1]\{-2\} \rightarrow i_j^* i_{j*} \O_\Delta \rightarrow \O_\Delta$$
(for $j=1,2$) give two maps $\alpha_1, \alpha_2: \O_\Delta[-1]\{1\} \rightarrow \O_\Delta[1]\{-1\}$.

Now consider $\sE(\l+1) \in D(Y(\l) \times Y(\l+2))$. By Lemma \ref{cor:j1} below
\begin{equation*}\label{eq:20}
\sE(\l+1) * \Cone(\O_\Delta[-1]\{1\} \xrightarrow{\alpha_1} \O_\Delta[1]\{-1\}) \cong i^* i_* \sE(\l+1) \{1\}
\end{equation*}
where $i$ is the inclusion $Y(\l) \times Y(\l+2) \rightarrow \tY(\l) \times Y(\l+2)$. Notice that we could just as well have used $\alpha_2$ instead of $\alpha_1$.  

An analogous argument shows that 
$$\Cone(\O_\Delta[-1]\{1\} \xrightarrow{\alpha_1} \O_\Delta[1]\{-1\}) * \sE(\l-1) \cong i'^* i'_* \sE(\l-1) \{1\}$$
where $i'$ is the inclusion $Y(\l-2) \times Y(\l) \rightarrow Y(\l-2) \times \tY(\l)$. Combining these two results we can define $\theta(\l)$:  

\begin{Definition} For each $\l$ let $\theta(\l) := \alpha_1$. Then we find that
$$\sE(\l+1) * \Cone(\O_\Delta[-1]\{1\} \xrightarrow{\theta(\l)} \O_\Delta[1]\{-1\}) \cong i^* i_* \sE(\l+1) \{1\}$$
where $i: Y(\l) \times Y(\l+2) \rightarrow \tY(\l) \times Y(\l+2)$ and
$$\Cone(\O_\Delta[-1]\{1\} \xrightarrow{\theta(\l)} \O_\Delta[1]\{-1\}) * \sE(\l-1) \cong i'^* i'_* \sE(\l-1) \{1\}$$
where $i': Y(\l-2) \times Y(\l) \rightarrow Y(\l-2) \times \tY(\l)$. 
\end{Definition}

\begin{Remark}
Proposition \ref{prop:theta_k} implies that the map 
$$\sE * (\O_\Delta[-1]\{1\} \xrightarrow{\theta(\l)} \O_\Delta[1]\{-1\}) * \sE^{(r)}$$
induces an isomorphism on all but two summands $\sE^{(r+1)}$ (one on each side). Similarly, Proposition \ref{prop2:EF=FE} implies that the map
$$\sF(\l-1) * (\O_\Delta[-1]\{1\} \xrightarrow{\theta(\l)} \O_\Delta[1]\{-1\}) * \sE(\l-1)$$
induces an isomorphism on all but two summands $\O_\Delta$ (one on each side). We will use these facts in the future.
\end{Remark}

\begin{Lemma}\label{cor:j1}
Consider varieties $Y$ and $Y'$ and the deformation $\tY \rightarrow \bA^1_\k$ of $Y$ (all equipped with compatible $\k^\times$ actions). Let $\sG \in D(Y \times Y')$ be some kernel. Then 
$$\sG * \Cone(\O_\Delta[-1]\{1\} \xrightarrow{\alpha_j} \O_\Delta[1]\{-1\}) \cong  i^* i_* \sG \{1\}.$$
Here $i$ is the inclusion $Y \times Y' \rightarrow \tY \times Y'$ and $\alpha_1, \alpha_2$ are the connecting maps for $\O_\Delta$ associated to the two deformations $\tY \times Y$ and $Y \times \tY$ of $Y \times Y$. 
\end{Lemma}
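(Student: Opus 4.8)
The plan is to rewrite both sides in a form where Lemma~\ref{lem:j1} applies, and then to reduce the case $j=1$ to the case $j=2$. First, by the cone description established in Section~\ref{se:formality} (applied to the two deformations $\tY\times Y$ and $Y\times\tY$ of $Y\times Y$, each with normal bundle $\O\{2\}$), one has
\[
\Cone(\O_\Delta[-1]\{1\}\xrightarrow{\alpha_1}\O_\Delta[1]\{-1\})\cong i_1^*i_{1*}\O_\Delta\{1\},\qquad \Cone(\O_\Delta[-1]\{1\}\xrightarrow{\alpha_2}\O_\Delta[1]\{-1\})\cong i_2^*i_{2*}\O_\Delta\{1\},
\]
where $i_1\colon Y\times Y\to\tY\times Y$ and $i_2\colon Y\times Y\to Y\times\tY$. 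Hence it suffices to prove $\sG*(i_j^*i_{j*}\O_\Delta)\cong i^*i_*\sG$ for $j=1,2$.

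For $j=2$ I would apply Lemma~\ref{lem:j1} directly, with $Y_1=Y$, $Y_2=Y$, $Y_3=Y'$, $\tY_2=\tY$, $\sG_{12}=\O_\Delta$ and $\sG_{23}=\sG$. Then the inclusion called $i_{12}$ there is exactly $i_2$, and the one called $i_{23}$ is exactly $i$, so the last isomorphism of Lemma~\ref{lem:j1} reads $\sG*(i_2^*i_{2*}\O_\Delta)\cong(i^*i_*\sG)*\O_\Delta\cong i^*i_*\sG$, which is the $j=2$ assertion. The reason $\alpha_2$ occurs so naturally is that in the convolution $\sG*\O_\Delta$ it is the second factor of $\O_\Delta$ that is contracted against $\sG$, and Lemma~\ref{lem:j1} is precisely the device for transporting a deformation sitting at such an interior factor.

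The case $j=1$ is the main point, because $\alpha_1$ deforms the factor of $\O_\Delta$ that becomes the outer factor of $\sG*\O_\Delta=\sG$, and Lemma~\ref{lem:j1} does not move deformations at outer factors. I would instead reduce to $j=2$ by showing that $\alpha_1$ and $\alpha_2$ span the same line in $\Hom(\O_\Delta[-1]\{1\},\O_\Delta[1]\{-1\})$ — each is in any case only defined up to a nonzero scalar — so that $\Cone(\alpha_1)\cong\Cone(\alpha_2)$ and the $j=1$ case follows from the $j=2$ case. To see this, note that $\tY\times Y$ and $Y\times\tY$ are the restrictions of the two-parameter family $\tY\times\tY\to\bA^1_\k\times\bA^1_\k$ along the two coordinate axes, while its restriction along the diagonal $\bA^1_\k\hookrightarrow\bA^1_\k\times\bA^1_\k$ is the one-parameter deformation $\tY\times_{\bA^1_\k}\tY$ of $Y\times Y$; by additivity of the cup product with the Atiyah class $\At(\O_\Delta)$ (whose two components cup to $\alpha_1$ and $\alpha_2$), the connecting map of $\O_\Delta$ for the diagonal family is $\alpha_1+\alpha_2$. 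But $\O_\Delta$ manifestly deforms over $\tY\times_{\bA^1_\k}\tY$, namely to the structure sheaf of the relative diagonal (which is isomorphic to $\tY$ and flat over $\bA^1_\k$). Therefore $\alpha_1+\alpha_2=0$, so $\alpha_2=-\alpha_1$, and we are done. Compatibility with the $\k^\times$-actions is automatic, exactly as in Lemma~\ref{lem:j1}.

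The main obstacle is this last step: one must set up the deformation-theoretic bookkeeping carefully — identifying the connecting map of the diagonally restricted family with $\alpha_1+\alpha_2$, and keeping track of the harmless scalar indeterminacy coming from the non-canonical trivializations $N_{Y/\tY}\cong\O_Y\{2\}$. By contrast, the $j=2$ case and the cancellation of the diagonal factor are purely formal manipulations of Lemma~\ref{lem:j1} and of convolution.
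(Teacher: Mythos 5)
Your proposal is correct, and it agrees with the paper on the $\alpha_2$ case but diverges on the $\alpha_1$ case. For $\alpha_2$ both arguments are a direct application of Lemma~\ref{lem:j1}. For $\alpha_1$, the paper stays purely formal: it writes $\sG * (i_1^* i_{1*}\O_\Delta)\{1\} \cong \sG * (i_1^* i_{1*}\O_\Delta) * \O_\Delta\{1\}$, applies Lemma~\ref{lem:j1} with $\sG_{12} = \sG_{23} = \O_\Delta$ to transport the deformation from the first factor to the second, obtaining $\sG * \O_\Delta * (i_2^* i_{2*}\O_\Delta)\{1\}$, and then concludes by the $\alpha_2$ case. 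In effect this too establishes $\Cone(\alpha_1)\cong\Cone(\alpha_2)$, but by a kernel-algebra manipulation rather than by identifying $\alpha_1$ with $-\alpha_2$. Your argument instead realizes the two connecting classes as the coordinate restrictions of the two-parameter family $\tY\times\tY\to\bA^1_\k\times\bA^1_\k$ and observes that $\O_\Delta$ visibly deforms over the diagonal restriction $\tY\times_{\bA^1_\k}\tY$ (to the structure sheaf of the relative diagonal, flat over the base), so the corresponding obstruction $\alpha_1+\alpha_2$ vanishes, giving $\alpha_2=-\alpha_1$ under compatible trivializations of the normal bundles. This is a genuine alternative: it is more geometrically transparent and explains \emph{why} the two cones coincide, at the cost of the deformation-theoretic bookkeeping (identifying the Kodaira--Spencer class of the diagonal family with $\kappa_1+\kappa_2$ and tracking the non-canonical identifications $N\cong\O\{2\}$) that you correctly flag as the delicate step. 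The paper's route is leaner, reusing Lemma~\ref{lem:j1} a second time and requiring no further deformation theory, but it is less illuminating; both are valid.
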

\begin{proof}
First let us consider $\alpha_2$. We will be working on $Y \times Y \times Y'$ so let $i_{12}: Y \times Y \rightarrow Y \times \tY$ and $i = i_{23}: Y \times Y' \rightarrow \tY \times Y'$ denote the natural inclusions. 

Let $ \beta_{2} = \sG * \alpha_2 : \sG[-1]\{1\} \rightarrow \sG[1]\{-1\}$. Then by Lemma \ref{lem:j1}
\begin{align*}
\sG * \Cone(\O_\Delta[-1]\{1\} \xrightarrow{\alpha_2} \O_\Delta[1]\{-1\}) 
&\cong \sG * i_{12}^* i_{12*} \O_\Delta \{1\} \\
&\cong i_{23}^* i_{23*} \sG * \O_\Delta \{1\} \\
&\cong i_{23}^* i_{23*} \sG \{1\}.
\end{align*}
Hence we get 
$$\Cone(\sG [-1]\{1\} \xrightarrow{\beta_2} \sG [1]\{-1\} ) \cong i^* i_* \sG \{1\}.$$

The case of $\alpha_1$ follows similarly. Let $i'_{12}: Y \times Y \rightarrow \tY \rightarrow Y$ be the natural inclusion. Then 
\begin{align*}
\sG * \Cone(\O_\Delta[-1]\{1\} \xrightarrow{\alpha_1} \O_\Delta[1]\{-1\})
&\cong \sG * i_{12}^{'*} i'_{12*} \O_{\Delta} \{1\} \\
&\cong \sG * i_{12}^{'*} i'_{12*} \O_{\Delta} * \O_\Delta \{1\} \\
&\cong \sG * \O_\Delta * i_{12}^{*} i_{12*} \O_{\Delta} \{1\} \\
&\cong \sG * i_{12}^* i_{12*} \O_\Delta \{1\} \\
&\cong i_{23}^* i_{23*} \sG \{1\}
\end{align*}
where the last isomorphism follows as above. 
\end{proof}

\subsection{Construction and proof of Relation (iii)}\label{sec:reliii}

We first construct $X$s and $T$s satisfying nil affine Hecke relation (iii).  

\subsubsection{$\theta(\l)$ and Transient Maps}

The first step is to better understand maps $\sE(\l)\la-1\ra \rightarrow \sE(\l)\la1\ra$. We will write $\Delta(\l)$ for the diagonal inside $Y(\l) \times Y(\l)$ when we want to emphasize where the diagonal lives. For convenience we will assume from now on that $Y(\l) = \emptyset$ either for all $\l$ odd or all $\l$ even. 

Without loss of generality we can assume $\l \le 0$. Then 
$$\Hom(\sE(\l), \sE(\l)\la2\ra) \cong \Hom(\O_\Delta, \sE(\l)_R * \sE(\l)\la 2\ra ) \cong \Hom(\O_\Delta, \sF(\l) * \sE(\l) \la\l+2\ra).$$
Now $\sF(\l) * \sE(\l) \cong \sE(\l-2) * \sF(\l-2) \oplus \O_\Delta \otimes_\k H^\star(\p^{-\l})$ and we have that 
\begin{eqnarray*}
\Hom(\O_\Delta, \sE(\l-2) * \sF(\l-2)\la\l+2\ra) &\cong&
\Hom(\O_\Delta, \sF(\l-2)_R * \sF(\l-2) \la2\l\ra) \\
&\cong& \Hom(\sF(\l-2), \sF(\l-2)\la 2\l \ra).
\end{eqnarray*}
is zero if $\l \le -1$. So if $\l \le -1$ we get 
\begin{eqnarray*}
\Hom(\sE(\l), \sE(\l)\la 2 \ra) 
&\cong& \Hom(\O_\Delta, \O_\Delta \otimes_\k H^\star(\p^{-\l})\la\l+1\ra) \\
&\cong& \Hom(\O_{\Delta(\l-1)}, \O_{\Delta(\l-1)}) \oplus \Hom(\O_{\Delta(\l-1)}, \O_{\Delta(\l-1)}\la 2 \ra ).
\end{eqnarray*}
Meanwhile, if $\l=0$ we get 
$$\Hom(\sE(\l), \sE(\l) \la 2 \ra) \cong \Hom(\sF(\l-2), \sF(\l-2)) \oplus \Hom(\O_{\Delta(\l-1)}, \O_{\Delta(\l-1)}\la 2 \ra ).$$

In both cases we have the maps indexed by $\Hom(\O_\Delta, \O_\Delta\la 2 \ra)$. If we examine the adjunction calculation above it is easy to see that these maps correspond to those of the form 
$$\sE(\l) * (\O_{\Delta(\l-1)} \la-1\ra \rightarrow \O_{\Delta(\l-1)}\la1\ra).$$
On the other hand, there is also the map 
$$(\O_{\Delta(\l+1)} \la-1\ra \xrightarrow{\theta(\l+1)} \O_{\Delta(\l+1)}\la1\ra) * \sE(\l).$$
This map cannot be of the form above because if it were then 
\begin{eqnarray*}
& & \sE(\l+2) * (\O_{\Delta(\l+1)} \la-1\ra \xrightarrow{\theta(\l+1)} \O_{\Delta(\l+1)}\la1\ra) * \sE(\l) \\
&\cong& \sE(\l+2) * \sE(\l) * (\O_{\Delta(\l-1)}\la-1\ra \rightarrow \O_{\Delta(\l-1)}\la1\ra) \\
&\cong& \sE^{(2)}(\l+1) * (\O_{\Delta(\l-1)}\la-1\ra \rightarrow \O_{\Delta(\l-1)}\la1\ra) \otimes_{\k} H^\star(\p^1)
\end{eqnarray*}
induces the zero map on the cohomology $\sE^{(2)}(\l+1)\la-1\ra \oplus \sE^{(2)}(\l+1)\la1\ra$ of $\sE(\l+2) * \sE(\l)$ (note that the map itself is not zero). This contradicts Proposition \ref{prop:theta_k} in the case $r=1$. 

We conclude that any map $\sE(\l)\la-1\ra \rightarrow \sE(\l)\la1\ra$ is of the form $a\theta(\l+1) + \phi$ where $\phi: \O_{\Delta(\l-1)}\la-1\ra \rightarrow \O_{\Delta(\l-1)}\la1\ra$ is arbitrary. Notationally we will write this map as $(a \theta(\l+1), \phi)$ to remind ourselves that $\theta(\l+1)$ acts on the left of $\sE(\l)$ while $\phi$ acts on the right. 

There is one last piece of nice structure here worth exploiting. Take any 
$$\phi \in \Hom(\O_{\Delta(\l-1)}\la-1\ra, \O_{\Delta(\l-1)}\la1\ra).$$
Then by the argument above 
$$(\O_{\Delta(\l-1)} \la-1\ra \xrightarrow{\phi} \O_{\Delta(\l-1)}\la1\ra) * \sE(\l-2)$$ 
must be equal to a map $(a' \theta(\l-1), \phi')$ for some $a' \in \k$ and $\phi': \O_{\Delta(\l-3)}\la-1\ra \rightarrow \O_{\Delta(\l-3)}\la1\ra$. This means that there exists a distinguished linear subspace 
$$V(\l-1)^{tr} \subset \Hom(\O_{\Delta(\l-1)}\la-1\ra, \O_{\Delta(\l-1)}\la1\ra)$$ 
consisting of those $\phi$ which induce $a' = 0$. We will call such an element $\phi \in V(\l-1)^{tr}$ a {\emph transient map}. 

We can define transient maps $V(\l)^{tr} \subset \Hom(\O_{\Delta(\l)}\la-1\ra, \O_{\Delta(\l)}\la1\ra)$ for every $Y(\l)$ by using $\sE(\l-1)$ if $\l \le 0$ and $\sF(\l+1)$ if $\l \ge 0$. There is a small conflict when $\l=0$ since there are two ways of defining transient maps in that case. Fortunately the two definitions agree (see Proposition \ref{prop:transients} below). 

Notice that $V(\l)^{tr} \subset \Hom(\O_{\Delta(\l)}\la-1\ra, \O_{\Delta(\l)}\la1\ra)$ is a codimension one linear subspace (the only exception is at the extremes where $\l = \pm N$ in which case every map is transient). If $\phi$ is transient then we can ``slide'' it from the $Y(\l)$ slot to the $Y(\l-2)$ slot if $\l \le 0$ ($Y(\l+2)$ slot if $\l \ge 0$) to obtain some new $\phi'$. As Proposition \ref{prop:transients} below shows $\phi'$ will again be transient and so we can repeat the process. This is why we call them transient maps. To summarize: 

\begin{Proposition}\label{prop:E2maps}
Every map $\sE(\l)\la-1\ra \rightarrow \sE(\l)\la1\ra$ is of the form $(a \theta(\l+1), b \theta(\l-1) + \phi)$ if $\l \le 0$ and $(a \theta(\l+1) + \phi, b \theta(\l-1))$ if $\l \ge 0$ where $a,b \in \k$ and $\phi$ is transient. Taking adjoints we obtain the analogous claim for maps $\sF(\l)\la-1\ra \rightarrow \sF(\l)\la1\ra$. 
\end{Proposition}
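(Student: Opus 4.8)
The plan is to read off the statement from the normal form for $\Hom(\sE(\l)\la-1\ra,\sE(\l)\la1\ra)$ already obtained in the discussion above, together with one further, essentially tautological, observation: that $\theta(\l-1)$ (resp. $\theta(\l+1)$) is itself \emph{not} transient.

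First I would treat the case $\l\le 0$. The discussion preceding the statement shows that every morphism $\sE(\l)\la-1\ra\to\sE(\l)\la1\ra$ has the form $a\cdot(\theta(\l+1) * \sE(\l))+\sE(\l) * \chi$ for a unique $a\in\k$ and a unique $\chi\in\Hom(\O_{\Delta(\l-1)}\la-1\ra,\O_{\Delta(\l-1)}\la1\ra)$, the line $\k\cdot(\theta(\l+1) * \sE(\l))$ and the image of the (injective) map $\chi\mapsto\sE(\l) * \chi$ being complementary. So the whole assertion reduces to proving
$$\Hom(\O_{\Delta(\l-1)}\la-1\ra,\O_{\Delta(\l-1)}\la1\ra)=\k\cdot\theta(\l-1)\oplus V(\l-1)^{tr}.$$
Granting this, one writes $\chi=b\theta(\l-1)+\phi$ with $\phi$ transient to obtain the asserted form $(a\theta(\l+1),\,b\theta(\l-1)+\phi)$, the triple $(a,b,\phi)$ being unique since all the decompositions used are direct.

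To prove the displayed identity I would unwind the definition of transience. By construction $V(\l-1)^{tr}$ is the kernel of the linear functional $L$ on $\Hom(\O_{\Delta(\l-1)}\la-1\ra,\O_{\Delta(\l-1)}\la1\ra)$ sending $\chi$ to the coefficient $a'$ in $\chi * \sE(\l-2)=a'\cdot(\theta(\l-1) * \sE(\l-2))+\sE(\l-2) * \psi$ — this being the normal form of the previous paragraph applied to $\sE(\l-2)$, which is legitimate since $\l-2\le 0$; linearity and well-definedness of $L$ are precisely the uniqueness in that normal form. The point is that $L(\theta(\l-1))$ is trivial to evaluate: tautologically $\theta(\l-1) * \sE(\l-2)=1\cdot(\theta(\l-1) * \sE(\l-2))+\sE(\l-2) * 0$, so $L(\theta(\l-1))=1\neq 0$. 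Hence $L$ is surjective, $V(\l-1)^{tr}$ has codimension one, and $\k\cdot\theta(\l-1)$ is a complement to it — which is the claim. (This also re-proves the codimension-one statement made in the discussion above; the only exceptions are the extreme weights, e.g. $\l=-N+1$, where $\sE(\l-2)$ is no longer available, $V(\l-1)^{tr}$ is the whole space by convention, and one simply takes $b=0$.)

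The case $\l\ge 0$ I would handle by the identical argument with the two tensor slots (equivalently, the roles of $\sE$ and $\sF$) interchanged — every ingredient invoked, namely the adjunctions of item (ii), Proposition~\ref{prop:theta_k}, Proposition~\ref{prop2:EF=FE}, and Lemma~\ref{lem:homs2}, is symmetric in $\sE$ and $\sF$ — which gives the form $(a\theta(\l+1)+\phi,\,b\theta(\l-1))$ with $\phi\in V(\l+1)^{tr}$. Finally, the statement for morphisms $\sF(\l)\la-1\ra\to\sF(\l)\la1\ra$ follows by passing to right and left adjoints, which identify the endomorphism spaces of $\sF(\l)$ with those of $\sE(\l)$ and carry $\theta$-maps to $\theta$-maps and transient maps to transient maps, or once more by rerunning the derivation with $\sE$ and $\sF$ exchanged. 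I do not expect a genuine obstacle here: once the normal form from the preceding discussion is available, the proof is a short assembly whose only new input is the one-line identity $L(\theta(\l-1))=1$. The one place needing care is the bookkeeping at the boundary weights — at $\l=0$ one must know the two definitions of $V(0)^{tr}$ coincide, which is what Proposition~\ref{prop:transients} below supplies, and at $\l=\pm N$ one invokes the convention just mentioned.
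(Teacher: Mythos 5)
The proposal is correct and takes essentially the same approach the paper intends. The paper states this Proposition as a summary of the discussion immediately preceding it rather than giving a separate proof; the student correctly isolates the one additional ingredient needed — that $\theta(\l-1)$ is not transient, via the tautological computation $L(\theta(\l-1))=1$ — which is precisely the fact implicitly invoked when the paper asserts (and later records in Proposition~\ref{prop:transients}) that $V(\l)^{tr}$ has codimension one with the quotient spanned by $[\theta(\l)]$.
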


\begin{Proposition}\label{prop:transients}
Transient maps are well defined and come equipped with natural maps $V(\l)^{tr} \rightarrow V(\l-2)^{tr}$ if $\l \le 0$ and $V(\l)^{tr} \rightarrow V(\l+2)^{tr}$ if $\l \ge 0$. We also have isomorphisms $V(-1)^{tr} \cong V(1)^{tr}$ and $V(-2)^{tr} \cong V(2)^{tr}$. For $\l \ne \pm N$, the quotient $\Hom(\O_{\Delta(\l)}\la-1\ra, \O_{\Delta(\l)}\la1\ra)/V(\l)^{tr}$, which is one-dimensional, is spanned by $[\theta(\l)]$. If $\l = \pm N$ this quotient is zero. 
\end{Proposition}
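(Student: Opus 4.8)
The plan is to deduce everything from a careful description of the degree-two self-maps $\Hom(\sE(\l)\la-1\ra,\sE(\l)\la1\ra)$, which the discussion preceding Proposition~\ref{prop:E2maps} already essentially provides. Via the adjunction isomorphism together with the splitting $\sF(\l)*\sE(\l)\cong\sE(\l-2)*\sF(\l-2)\oplus\O_\Delta\otimes_\k H^\star(\p^{-\l})$ of Proposition~\ref{prop2:EF=FE} (whose gluing maps are built from $\theta(\l)$), and using $\End(\sF(\l-2))\cong\k\cdot I$ from Lemma~\ref{lem:homs2}, one obtains for $\l\le 0$ a coarse direct sum decomposition
$$\Hom(\sE(\l)\la-1\ra,\sE(\l)\la1\ra)\;\cong\;\k\cdot\bigl(\theta(\l+1)*\sE(\l)\bigr)\;\oplus\;\bigl\{\,\sE(\l)*\phi \ :\ \phi\in\Hom(\O_{\Delta(\l-1)}\la-1\ra,\O_{\Delta(\l-1)}\la1\ra)\,\bigr\},$$
in which $\phi\mapsto\sE(\l)*\phi$ is injective and $\theta(\l+1)*\sE(\l)$ does \emph{not} lie in its image (this last point being exactly the argument with Proposition~\ref{prop:theta_k}, case $r=1$, given in the text). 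The symmetric statement, with $\sF(\l+1)$ replacing $\sE(\l-1)$, holds for $\l\ge 0$.

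Granting this, the quotient-structure assertions come first. For $\l=\pm N$ the space $Y(\l)$ is a point, so $\Hom(\O_{\Delta(\l)}\la-1\ra,\O_{\Delta(\l)}\la1\ra)=\Ext^2_{\mathrm{pt}}(\O,\O\{-2\})=0$ and $V(\l)^{tr}$ is the whole (zero) space, giving the vanishing quotient. For $\l\le 0$ with $\l\ne -N$ (the case $\l\ge 0$ symmetric), apply the coarse decomposition to $\sE(\l-1)$: the coefficient of $\theta(\l)*\sE(\l-1)$ in a self-map of $\sE(\l-1)$ is well defined, precisely because $\theta(\l)*\sE(\l-1)$ is not of the form $\sE(\l-1)*(-)$. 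Define $s\colon\Hom(\O_{\Delta(\l)}\la-1\ra,\O_{\Delta(\l)}\la1\ra)\to\k$ by letting $s(\phi)$ be this coefficient for the slid map $\phi*\sE(\l-1)$, and set $V(\l)^{tr}:=\ker s$ (using $\phi*\sF(\l+1)$ instead when $\l\ge 0$). Since $\theta(\l)*\sE(\l-1)$ is tautologically the distinguished summand, $s(\theta(\l))=1$; hence $s\ne 0$, $V(\l)^{tr}$ has codimension one, $\theta(\l)\ne 0$, and $[\theta(\l)]$ spans the one-dimensional quotient. In particular $\Hom(\O_{\Delta(\l)}\la-1\ra,\O_{\Delta(\l)}\la1\ra)=\k\,\theta(\l)\oplus V(\l)^{tr}$, which is exactly the input needed to upgrade the coarse decomposition to Proposition~\ref{prop:E2maps}. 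For well-definedness when $\l=0$, one must check the two functionals $s$ (built from $\sE(-1)$ and from $\sF(1)$) have the same kernel; this follows from $\sF(1)*\sE(1)\cong\sE(-1)*\sF(-1)$ — with no auxiliary $H^\star(\p^{-1})$-summand, since $\p^{-1}=\varnothing$ — which matches the two slides.

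It remains to produce the natural maps and the two central isomorphisms. If $\phi\in V(\l)^{tr}$ with $\l\le 0$, then $\phi*\sE(\l-1)=\sE(\l-1)*\chi$ for a unique $\chi$ (injectivity of $\sE(\l-1)*(-)$), and $\chi\in V(\l-2)^{tr}$: sliding once more, using $\sE(\l-1)*\sE(\l-3)\cong\sE^{(2)}(\l-2)\otimes_\k H^\star(\p^1)$ and Proposition~\ref{prop:theta_k} to detect any internal $\theta(\l-2)$-contribution, the transience of $\phi$ forces that of $\chi$; this gives $V(\l)^{tr}\to V(\l-2)^{tr}$ (and symmetrically $V(\l)^{tr}\to V(\l+2)^{tr}$ for $\l\ge 0$). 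For $V(-1)^{tr}\cong V(1)^{tr}$: when $N$ is even both sides vanish (weights $\pm1$ are absent); when $N$ is odd, the $\l\le 0$ and $\l\ge 0$ forms of Proposition~\ref{prop:E2maps} applied to $\sE(0)$ present $\Hom(\sE(0)\la-1\ra,\sE(0)\la1\ra)$ both as $\k\,\theta(1)\oplus\k\,\theta(-1)\oplus V(-1)^{tr}$ and as $\k\,\theta(1)\oplus\k\,\theta(-1)\oplus V(1)^{tr}$ with the \emph{same} two-dimensional $\theta$-part, so $V(-1)^{tr}\cong V(1)^{tr}$ as the common complement. The isomorphism $V(-2)^{tr}\cong V(2)^{tr}$ is obtained in the same spirit from $\Hom(\sE^{(2)}(0)\la-1\ra,\sE^{(2)}(0)\la1\ra)$, using Corollary~\ref{cor2:EF=FE} to read off the transient summand from either end (both sides being zero when $N$ is odd).

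The main obstacle is not a single hard estimate but the disciplined bookkeeping of which tensor factor, and which slot of which convolution, every map lives in: each slide functional, each $\theta$-coefficient, and each appeal to Propositions~\ref{prop:theta_k} and \ref{prop2:EF=FE} has to be performed on the correct factor, and one must ensure there is no circular dependence on Proposition~\ref{prop:E2maps} — which is why the $\l=\pm N$ base case, where these $\Hom$ spaces vanish outright, is essential. The most delicate spots are the coherence of the two definitions of $V(0)^{tr}$ and the slide-twice step behind the natural maps $V(\l)^{tr}\to V(\l\mp2)^{tr}$ and the identification $V(-2)^{tr}\cong V(2)^{tr}$.
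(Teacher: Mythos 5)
Your overall strategy is the same as the paper's: use the decomposition of $\Hom(\sE(\l)\la-1\ra,\sE(\l)\la1\ra)$ established before Proposition~\ref{prop:E2maps}, define transience by a slide functional, prove the slide preserves transience by sliding once more, and read off the two weight-zero isomorphisms from the two parametrizations of endomorphisms of $\sE(0)$ and $\sE^{(2)}(0)$. That part of your write-up matches the paper's proof in spirit, with one caveat: where the paper proves transience of the slid map $\phi'$ by an explicit computation of $\H^*(\Cone(\phi,0,0))$ versus $\H^*(\Cone(0,a\theta(\l-2),\phi''))$ (showing $a\neq 0$ would collapse two cohomology degrees that are manifestly present), you only gesture at ``detecting the internal $\theta(\l-2)$-contribution via Proposition~\ref{prop:theta_k}''; that is the right idea but the actual detection mechanism needs to be spelled out, and the cone-cohomology computation is precisely how one does it.

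There is, however, a genuine gap in the $\l=\pm N$ base case. You assert that $Y(\pm N)$ is a point, so the ambient $\Hom$ space is $\Ext^2_{\mathrm{pt}}(\O,\O\{-2\})=0$. This is true for the Grassmannian example, where $Y(\pm N)=T^\star\bG(0,N)$ or $T^\star\bG(N,N)$, but it is \emph{not} part of the abstract definition of a geometric categorical $\sl_2$ action. Nothing in the axioms of Section~\ref{se:defgeom} forces the extreme weight spaces to be points, and the proposition is stated and proved for arbitrary such actions. In particular $\Hom(\O_{\Delta(\pm N)}\la-1\ra,\O_{\Delta(\pm N)}\la1\ra)$ need not vanish, and $\theta(\pm N)$ need not be zero. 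The correct reason the quotient vanishes at $\l=-N$ (resp.\ $\l=N$) is simply that the sliding kernel $\sE(-N-1)$ (resp.\ $\sF(N+1)$) is the zero object — there is no $Y(-N-2)$ (resp.\ $Y(N+2)$) — so $\phi*\sE(-N-1)=0$ for every $\phi$, the $\theta(-N)$-coefficient is always zero, and hence \emph{every} map is transient: $V(\pm N)^{tr}$ is the whole $\Hom$ space without any claim that this space itself vanishes. (This is also exactly what the paper's proof relies on when it says ``unless $\l-2=-N$ in which case $\phi'$ is automatically transient since all such maps are transient.'') Your base-case argument should be replaced by this observation; as written it proves the statement only under an unstated extra hypothesis on the geometry.
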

\begin{proof}
To obtain the natural maps note that if $\phi \in V(\l)^{tr}$ (where $\l \le 0$) then by definition 
$$(\phi,0): \sE(\l-1)\la-1\ra \rightarrow \sE(\l-1)\la1\ra$$
is isomorphic to $(0, \phi')$ for a unique $\phi' \in V(\l-2)$. What we need to check is that $\phi'$ is transient. 

To this this we consider 
$$(\phi,0,0) = (0, \phi', 0): \sE(\l-1) * \sE(\l-3)\la-1\ra \rightarrow \sE(\l-1) * \sE(\l-3)\la1\ra.$$
We can do this unless $\l-2=-N$ in which case $\phi'$ is automatically transient since all such maps are transient. Then $(0,\phi',0) = (0,a \theta(\l-2), \phi'')$ for some 
$$a \in \k \text{ and } \phi'' \in \Hom(\O_{\Delta(\l-4)}\la-1\ra, \O_{\Delta(\l-4)}\la1\ra).$$

We need to show that $a=0$. To do this we consider the cone 
$$\Cone(\sE(\l-1) * \sE(\l-3)\la-1\ra \xrightarrow{(0,a \theta(\l-2), \phi'')} \sE(\l-1) * \sE(\l-3)\la1\ra)$$
If $a \ne 0$ then the induced map on cohomology is an isomorphism in homological degree zero and so the cone has non-zero cohomology only in degrees $-2$ and $1$. But this cone is the same $\Cone(\phi,0,0)$ which does not induce an isomorphism in degree zero and hence has non-zero cohomology in homological degrees $-2,-1,0,1$. Thus $a=0$ and $\phi'$ is transient. 

The case of $\l \ge 0$ follows similarly. 

That $V(-1)^{tr} \cong V(1)^{tr}$ follows from the fact that we have natural maps $V(-1)^{tr} \rightarrow V(1)^{tr} \rightarrow V(-1)^{tr}$ whose composition is the identity. To see that the composition is the identity observe that a map 
$$\sE(0) \la-1\ra \rightarrow \sE(0) \la1\ra$$
is of the form $(a \theta(1), b \theta(-1) + \phi)$ for a {\em unique} transient $\phi$. 

The isomorphism $V(-2)^{tr} \cong V(2)^{tr}$ follows similarly by looking at maps $\sE^{(2)}(0)\la-1\ra \rightarrow \sE^{(2)}(0)\la1\ra$ and using Proposition \ref{prop:E22maps} which states that any such map is of the form $(a \theta(2), b \theta(-2) + \phi)$ for a unique transient $\phi$. 
\end{proof}

\subsubsection{Defining the $T$s and $X$s modulo transient maps}

As a first step we will define the $X(\l)$s up to transients. Working modulo transients is more convenient since (for $\l \ne \pm N$)
$$\Hom(\sE(\l), \sE(\l)\la 2 \ra) \mbox{ modulo transients } \cong \{(a \theta(\l+1), b \theta(\l-1)): a,b \in \k \}$$
is two-dimensional spanned by $(0,\theta(\l-1))$ and $(\theta(\l+1),0)$. Thus to determine $X(\l)$ modulo transients we only need to choose $a(\l),b(\l) \in \k^2$ and define $X(\l) := (a(\l) \theta(\l+1), b(\l) \theta(\l-1))$. 

We begin by fixing an isomorphism
\begin{equation}\label{eq:iso2}
\sE(\l+1) * \sE(\l-1) \xrightarrow{\sim} \sE^{(2)}(\l)\la-1\ra \oplus \sE^{(2)}(\l)\la1\ra.
\end{equation}
This isomorphism is not unique since we can compose it with elements of 
$$\Aut(\sE^{(2)}(\l)\la-1\ra \oplus \sE^{(2)}(\l)\la1\ra) \cong 
\left\{ \left( \begin{matrix} a \cdot I & 0 \\ \alpha & b \cdot I \end{matrix} \right): a,b \in \k^\times, \alpha \in \Hom(\sE^{(2)}(\l)\la-1\ra, \sE^{(2)}(\l)\la1\ra) \right\}
$$
(here we use that $\Ext^i(\sE^{(k)}(\l), \sE^{(k)}(\l)\{j\}) = 0$ for $i < 0$ and any $j \in \Z$ while $\End(\sE^{(k)}(\l)) \cong \k \cdot I$). 

Using this isomorphism we can write 
$$I * X(\l-1) = \left( \begin{matrix} A & B \\ C & D \end{matrix} \right): 
\left( \begin{matrix} \sE^{(2)}(\l)\la-2\ra \\ \sE^{(2)}(\l) \end{matrix} \right) \rightarrow 
\left( \begin{matrix} \sE^{(2)}(\l) \\ \sE^{(2)}(\l)\la2\ra \end{matrix} \right)$$
where $A,D \in \Hom(\sE^{(2)}(\l), \sE^{(2)}(\l)\la2\ra)$, $B \in \End(\sE^{(2)}(\l))$ and $C \in \Hom(\sE^{(2)}(\l), \sE^{(2)}(\l)\la4\ra)$. Similarly, we have
$$X(\l+1) * I = \left( \begin{matrix} A' & B' \\ C' & D' \end{matrix} \right): 
\left( \begin{matrix} \sE^{(2)}(\l)\la-2\ra \\ \sE^{(2)}(\l) \end{matrix} \right) \rightarrow 
\left( \begin{matrix} \sE^{(2)}(\l) \\ \sE^{(2)}(\l)\la2\ra \end{matrix} \right).$$
Note that although these two matrices are defined only up to conjugation their traces $A+D$ and $A'+D'$ as well as $B$ and $B'$ are invariant under conjugation. 

\begin{Lemma}\label{lem:Bnotzero} If $a(\l-1) \ne 0$ then $B$ is a non-zero multiple of $I$. Similarly, if $b(\l+1) \ne 0$ then $B'$ is a non-zero multiple of $I$. 
\end{Lemma}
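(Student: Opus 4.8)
The plan is to identify the block $B$ by tracking where each piece of $X(\l-1)$ goes under $I * (-)$ and reading off the unique piece that can land off-diagonally. Write $A := \sE(\l+1) * \sE(\l-1)$, fix the isomorphism $A \cong S_- \oplus S_+$ with $S_\pm := \sE^{(2)}(\l)\la\pm 1\ra$ coming from (\ref{eq:iso2}), and note that $S_+\la-1\ra = S_-\la1\ra = \sE^{(2)}(\l)$ (no shift), so that $B$ is exactly the component $S_+\la-1\ra \to S_-\la1\ra$ of $I * X(\l-1)$, an element of $\End(\sE^{(2)}(\l)) = \k \cdot I$ by Lemma \ref{lem:homs2}; the claim is that this scalar is nonzero. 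I may assume $\l \le 0$: then $\l-1 < 0$, and by Proposition \ref{prop:E2maps} one has $X(\l-1) = a(\l-1)\,(\theta(\l) * \sE(\l-1)) + \sE(\l-1) * (b(\l-1)\theta(\l-2) + \phi)$, where $\phi$ is transient and sits on the outer $Y(\l-2)$-leg of $\sE(\l-1)$. (The case $\l \ge 0$, and the statement for $B'$, are entirely symmetric, with $\sF$ in place of $\sE$; in the sign-cases in which the transient term would sit on the middle leg one first slides it out onto an outer leg using Proposition \ref{prop:transients}.)

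Whiskering on the left by $\sE(\l+1)$ gives $I * X(\l-1) = a(\l-1)\,(\sE(\l+1) * \theta(\l) * \sE(\l-1)) + A * (b(\l-1)\theta(\l-2) + \phi)$, where in the second summand the morphism is convolved onto the outer $Y(\l-2)$-leg of $A$. Since convolution is additive, $A * (b(\l-1)\theta(\l-2) + \phi) = (S_- * (\cdot)) \oplus (S_+ * (\cdot))$ is block-diagonal with respect to $S_- \oplus S_+$, so its off-diagonal block — its contribution to $B$ — vanishes. Hence $B$ equals $a(\l-1)$ times the $(S_+\la-1\ra \to S_-\la1\ra)$-component of $\sE(\l+1) * \theta(\l) * \sE(\l-1)$.

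It remains to check that this component is an isomorphism. The morphism $\sE(\l+1) * \theta(\l) * \sE(\l-1)$ runs $A\la-1\ra \to A\la1\ra$; among the four shifted copies of the sheaf $\sE^{(2)}(\l)$ making up its source and target, only $S_+\la-1\ra$ (in the source) and $S_-\la1\ra$ (in the target) are concentrated in cohomological degree zero, the other two being $S_-\la-1\ra = \sE^{(2)}(\l)\la-2\ra$ and $S_+\la1\ra = \sE^{(2)}(\l)\la2\ra$, concentrated in nonzero degree. So applying $\H^0$ kills every summand except that pair, and the resulting map $\sE^{(2)}(\l) \to \sE^{(2)}(\l)$ is precisely the $(S_+\la-1\ra \to S_-\la1\ra)$-component; by the Remark following the Definition of $\theta(\l)$ (an application of Proposition \ref{prop:theta_k} with $r=1$), this map is an isomorphism. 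Since $\sE^{(2)}(\l)$ is a sheaf, a morphism $\sE^{(2)}(\l) \to \sE^{(2)}(\l)$ that is an isomorphism on $\H^0$ is already an isomorphism in $D$, hence a nonzero multiple $c \cdot I$ of the identity with $c \in \k^\times$ (using $\End(\sE^{(2)}(\l)) = \k \cdot I$). Therefore $B = c\, a(\l-1)\, I$, which is a nonzero multiple of $I$ as soon as $a(\l-1) \ne 0$. The assertion about $B'$ is proved the same way, with $\theta(\l)$ now occurring in $X(\l+1)$ with coefficient $b(\l+1)$ and its transient part on an outer leg, so that $B' = c'\, b(\l+1)\, I$ for some $c' \in \k^\times$.

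The main obstacle is the bookkeeping in the first two steps: one must be sure that $I * (-)$ carries the ``middle'' factor $\theta(\l)$ of $X(\l-1)$ to $\sE(\l+1) * \theta(\l) * \sE(\l-1)$ while carrying the $\theta(\l-2)$ part, the transient part, and — in the opposite-sign cases — an initially-middle transient part, to morphisms convolved onto an outer leg of $A$, hence block-diagonal and unable to touch the off-diagonal block $B$. Correctly matching the four shifted copies of $\sE^{(2)}(\l)$ under the convention $\la k \ra = [k]\{-k\}$, so as to place the surviving isomorphism of Proposition \ref{prop:theta_k} in the $B$-slot, is the other place where care is needed.
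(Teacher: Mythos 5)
Your proof is correct and takes essentially the same approach as the paper: both arguments identify $B$ with the degree-zero part of the map $I * X(\l-1)$, observe that the outer-leg (and transient) contributions of $X(\l-1)$ act block-diagonally on $\sE(\l+1)*\sE(\l-1) \cong \sE^{(2)}(\l)\la-1\ra \oplus \sE^{(2)}(\l)\la1\ra$ and hence cannot affect $B$, and then invoke Proposition \ref{prop:theta_k} (via the Remark after the definition of $\theta(\l)$) to see that the middle-leg $\theta(\l)$ term contributes an isomorphism in that slot. The paper expresses this as a cone/long-exact-sequence computation using condition (v) to pin down $\H^*(\Cone(I*X(\l-1)))$, whereas you read off $B$ directly as $\H^0$ of the morphism; these are two phrasings of the same argument.
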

\begin{proof}
Since $I * X(\l-1) = (0, a(\l-1) \theta(\l), b(\l-1) \theta(\l-2))$ we know that (at the level of cohomology)
\begin{eqnarray*}
\H(\Cone(I * X(\l-1))) 
& \cong & \H(\Cone(0, a(\l-1) \theta(\l), 0) \\
& \cong & \sE^{(2)}(\l) \otimes_\k (\k[-1]\{1\} \oplus \k[2]\{-3\})  
\end{eqnarray*}
since $a(\l-1) \ne 0$. But the long exact sequence in cohomology induced by $I * X(\l-1)$ looks like 
$$\dots \rightarrow 0 \rightarrow \sE^{(2)}(\l) \xrightarrow{B} \sE^{(2)}(\l) \rightarrow 0 \rightarrow \dots$$
so that $B$ has to be an isomorphism. Since $\End(\sE^{(2)}(\l)) = \k \cdot I$ the result follows. 

The result for $B'$ follows similarly. 
\end{proof}

At this point we define 
\begin{equation}\label{eq:T}
T(\l) := \left( \begin{matrix} 0 & 0 \\ -B^{-1} & 0 \end{matrix} \right): 
\left( \begin{matrix} \sE^{(2)}(\l) \\ \sE^{(2)}(\l)\la-2\ra \end{matrix} \right) \rightarrow
\left( \begin{matrix} \sE^{(2)}(\l)\la2\ra \\ \sE^{(2)}(\l) \end{matrix} \right).
\end{equation}
Notice that this map is invariant under conjugation and hence does not depend on our choice of isomorphism (\ref{eq:iso2}). In this notation it is now easy to check that nil affine Hecke relation (iii) is equivalent to the conditions
$$B+B'=0=C+C' \text{ and } A=D' \text{ and } A'=D.$$

\begin{Remark}\label{rem:nilhecke} A second way to characterize nil affine Hecke relation (iii) is by the conditions that $I * X(\l-1) + X(\l+1) * I$ is a multiple of the identity and 
$$\trace(I * X(\l-1)) = \trace(X(\l+1) * I): \sE^{(2)}(\l)\la-1\ra \rightarrow \sE^{(2)}(\l)\la1\ra.$$
This second condition can also be replaced by asking that 
$$X(\l+1) * X(\l-1): \sE(\l+1) * \sE(\l-1) \la-2\ra \rightarrow \sE(\l+1) * \sE(\l-1) \la2\ra$$
is diagonal. 
\end{Remark} 

We will now recursively define the $X$s. As a first step we let $b(\l+1) = -a(\l-1)$. Then we begin with the smallest weight by first defining 
$$X(-N+1) := (\theta(-N+2),0): \sE(-N+1)\la-1\ra \rightarrow \sE(-N+1)\la1\ra.$$
Notice that on $Y(-N)$ all maps $\O_\Delta\la-1\ra \rightarrow \O_\Delta\la1\ra$ are transient so the only choice we have is which (non-zero) multiple of $\theta(-N+2)$ we should take. Clearly the space of such choices is parametrized by $\k^\times$. 

Now suppose by induction that we have defined $X(-N+1), \dots, X(\l-1), X(\l+1) = (a(\l+1) \theta(\l+2), -a(\l-1) \theta(\l))$ such that nil affine Hecke relation (iii) holds for every pair up to $\sE(\l+1) * \sE(\l-1)$ and such that all the $a$'s are non-zero. This means 
$$X(\l+3) := (a(\l+3) \theta(\l+4), -a(\l+1) \theta(\l+2)): \sE(\l+3)\la-1\ra \rightarrow \sE(\l+3)\la1\ra$$
where $a(\l+3)$ remains to be determined.  

Let's fix again some isomorphism 
\begin{equation}\label{eq:iso3}
\sE(\l+3) * \sE(\l+1) \xrightarrow{\sim} \sE^{(2)}(\l+2)\la-1\ra \oplus \sE^{(2)}(\l+2)\la1\ra
\end{equation}
under which we have the identifications
$$I * X(\l+1) = \left( \begin{matrix} \hA & \hB \\ \hC & \hD \end{matrix} \right) \text{ and } 
X(\l+3) * I = \left( \begin{matrix} \hA' & \hB' \\ \hC' & \hD' \end{matrix} \right).$$

\begin{Lemma}\label{prop:lemX} We have $\hB+\hB'=0=\hC+\hC'$ while 
$$\hA+\hA'=\hD+\hD'=(a(\l+3) \theta(\l+4), b(\l+1) \theta(\l)): \sE^{(2)}(\l+2) \la-1\ra \rightarrow \sE^{(2)}(\l+2)\la1\ra.$$
\end{Lemma}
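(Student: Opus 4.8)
The plan is to compute the single morphism $I * X(\l+1) + X(\l+3) * I$ of $\sE(\l+3) * \sE(\l+1)$ all at once, to show that it is ``diagonal'' with respect to the fixed isomorphism (\ref{eq:iso3}), and then to read off the entrywise identities by comparing with the matrix expressions for $I * X(\l+1)$ and $X(\l+3) * I$ separately. The point is that $I * X(\l+1)$ and $X(\l+3) * I$ are individually far from diagonal (their ``middle'' $\theta(\l+2)$-terms induce isomorphisms on all but two summands $\sE^{(2)}$, by the Remark after Proposition \ref{prop:theta_k}), but these off-diagonal contributions are the \emph{same} morphism up to sign and hence cancel in the sum.

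First I would expand both summands using the recursive form of the $X$'s. By the inductive hypothesis $X(\l+1) = (a(\l+1)\theta(\l+2),\,-a(\l-1)\theta(\l))$, and, recalling $b(\l+3) = -a(\l+1)$ from the convention $b(\mu+1) = -a(\mu-1)$, also $X(\l+3) = (a(\l+3)\theta(\l+4),\,-a(\l+1)\theta(\l+2))$. Convolving $X(\l+1)$ on the left with $I_{\sE(\l+3)}$ and $X(\l+3)$ on the right with $I_{\sE(\l+1)}$, and using associativity of the convolution product, one obtains
\begin{align*}
I * X(\l+1) &= a(\l+1)\big(\sE(\l+3) * \theta(\l+2) * \sE(\l+1)\big) - a(\l-1)\big(\sE(\l+3) * \sE(\l+1) * \theta(\l)\big), \\
X(\l+3) * I &= a(\l+3)\big(\theta(\l+4) * \sE(\l+3) * \sE(\l+1)\big) - a(\l+1)\big(\sE(\l+3) * \theta(\l+2) * \sE(\l+1)\big).
\end{align*}
The two $\theta(\l+2)$-terms are literally the same morphism, so they cancel upon addition, leaving
$$I * X(\l+1) + X(\l+3) * I = a(\l+3)\big(\theta(\l+4) * \sE(\l+3) * \sE(\l+1)\big) - a(\l-1)\big(\sE(\l+3) * \sE(\l+1) * \theta(\l)\big).$$

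Next I would observe that the two surviving terms are convolutions applied on the \emph{outside} of $\sE(\l+3) * \sE(\l+1)$, hence compatible with (\ref{eq:iso3}): under $\sE(\l+3) * \sE(\l+1) \cong \sE^{(2)}(\l+2) \otimes_\k H^\star(\p^1)$, convolution on the left by $\theta(\l+4)$ becomes $(\theta(\l+4) * I_{\sE^{(2)}(\l+2)}) \otimes \id_{H^\star(\p^1)}$ and convolution on the right by $\theta(\l)$ becomes $(I_{\sE^{(2)}(\l+2)} * \theta(\l)) \otimes \id_{H^\star(\p^1)}$. An endomorphism of $\sE^{(2)}(\l+2)$ tensored with $\id_{H^\star(\p^1)}$ is diagonal in the ordered basis of $H^\star(\p^1)$ used to form the matrices, and its two diagonal entries coincide when regarded in $\Hom(\sE^{(2)}(\l+2), \sE^{(2)}(\l+2)\la2\ra)$. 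Thus $I * X(\l+1) + X(\l+3) * I$ equals the diagonal matrix with both entries equal to $M := a(\l+3)(\theta(\l+4) * I_{\sE^{(2)}(\l+2)}) - a(\l-1)(I_{\sE^{(2)}(\l+2)} * \theta(\l)) = (a(\l+3)\theta(\l+4),\, b(\l+1)\theta(\l))$, using $b(\l+1) = -a(\l-1)$.

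Finally, writing $I * X(\l+1) + X(\l+3) * I$ as $\left(\begin{smallmatrix} \hA+\hA' & \hB+\hB' \\ \hC+\hC' & \hD+\hD' \end{smallmatrix}\right)$ and comparing with $\left(\begin{smallmatrix} M & 0 \\ 0 & M \end{smallmatrix}\right)$ gives $\hB+\hB' = 0 = \hC+\hC'$ and $\hA+\hA' = \hD+\hD' = M$, which is the assertion. I expect the only points needing genuine care are the bookkeeping of the $\la\cdot\ra$-shifts when matching the graded pieces of $\sE^{(2)}(\l+2) \otimes_\k H^\star(\p^1)$ with the rows and columns of the matrices, and checking that the displayed outer-$\theta$ morphisms are independent of the choice of isomorphism (\ref{eq:iso3}); both are of the same flavour as the conjugation-invariance observations already recorded in Section \ref{sec:reliii} for $T(\l)$ and for the traces $A+D$, $A'+D'$, so no essentially new difficulty arises.
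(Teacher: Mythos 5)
Your proposal is correct and matches the paper's proof in essence: both expand $I*X(\l+1)$ and $X(\l+3)*I$ in the three-slot $\theta$-notation, note that the middle $\theta(\l+2)$ terms cancel (using $b(\l+3)=-a(\l+1)$), and conclude that the sum $(a(\l+3)\theta(\l+4),0,-a(\l-1)\theta(\l))$ is diagonal under the isomorphism (\ref{eq:iso3}) because the surviving outer $\theta$-convolutions commute with any isomorphism of the kernel $\sE(\l+3)*\sE(\l+1)$. The paper compresses the last step into ``and the result follows,'' while you spell out the naturality/diagonality reasoning that justifies it.
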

\begin{proof}
We have 
$$I * X(\l+1) = (0,  a(\l+1) \theta(\l+2), -a(\l-1) \theta(\l))$$ 
and 
$$X(\l+3) * I = (a(\l+3) \theta(\l+4), -a(\l+1) \theta(\l+2),0)).$$
Hence 
$$I * X(\l+1) + X(\l+3) * I = (a(\l+3) \theta(\l+4), 0, -a(\l-1) \theta(\l))$$
and the result follows. 
\end{proof}

It remains to show that we can choose $a(\l+3) \ne 0$ such that $\hA=\hD'$ and $\hA'=\hD$. To do this we first need to understand the possible maps $\sE^{(2)}(\l)\la-1\ra \rightarrow \sE^{(2)}(\l)\la1\ra$. 

\begin{Proposition}\label{prop:E22maps}
Every map $\sE^{(2)}(\l)\la-1\ra \rightarrow \sE^{(2)}(\l)\la1\ra$ is of the form $(a \theta(\l+2), b \theta(\l-2) + \phi)$ if $\l \le 0$ and $(a \theta(\l+2) + \phi, b \theta(\l-2))$ if $\l \ge 0$ where $a,b \in \k$ and $\phi$ is transient. 
\end{Proposition}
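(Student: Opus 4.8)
The plan is to run, essentially verbatim, the proof of Proposition~\ref{prop:E2maps}, with $\sE$ replaced by $\sE^{(2)}$ throughout and with Corollary~\ref{cor2:EF=FE} playing the role that the relation $\sF*\sE\cong\sE*\sF\oplus\O_\Delta\otimes_\k H^\star(\p^{\bullet})$ played there. By the mirror symmetry $\l\leftrightarrow-\l$ we may assume $\l\le0$, and the extremal case $\l=-N$ needs no special discussion beyond remembering that there every map $\O_\Delta\la-1\ra\to\O_\Delta\la1\ra$ is by definition transient.

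First I would compute $\Hom(\sE^{(2)}(\l),\sE^{(2)}(\l)\la2\ra)$. Using the adjunction $\sE^{(2)}(\l)_R=\sF^{(2)}(\l)\la2\l\ra$ this is isomorphic to $\Hom(\O_\Delta,\sF^{(2)}(\l)*\sE^{(2)}(\l)\la2\l+2\ra)$. Corollary~\ref{cor2:EF=FE} gives $\sF^{(2)}(\l)*\sE^{(2)}(\l)\cong\bigoplus_{j=0}^{2}(\sE^{(2-j)}*\sF^{(2-j)})\otimes_\k H^\star(\bG(j,2-\l))$, where on the $j=0$ summand $\End(\sE^{(2)}*\sF^{(2)})\cong\k\cdot I$. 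A second round of adjunctions turns $\Hom(\O_\Delta,(\sE^{(2-j)}*\sF^{(2-j)})\otimes_\k H^\star(\bG(j,2-\l))\la2\l+2\ra)$ into a sum of copies of $\Hom(\sF^{(2-j)},\sF^{(2-j)}\la d\ra)$ whose degrees $d$ are strictly negative for $j=0,1$ once $\l\le-1$ (so these vanish), and leaves, from the $j=2$ term together with the single surviving lower contribution (which appears in the $j=2$ slot when $\l\le-1$ and in the $j=1$ slot when $\l=0$), exactly $\Hom(\O_{\Delta(\l-2)},\O_{\Delta(\l-2)})\oplus\Hom(\O_{\Delta(\l-2)},\O_{\Delta(\l-2)}\la2\ra)$. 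Tracing the adjunction isomorphisms identifies the second summand with the maps of the form $\sE^{(2)}(\l)*\rho$ for $\rho\colon\O_{\Delta(\l-2)}\la-1\ra\to\O_{\Delta(\l-2)}\la1\ra$ (the ``right action'' on the $Y(\l-2)$ slot), while the first summand, a copy of $\k$, is spanned by $\theta(\l+2)*\sE^{(2)}(\l)$ (a ``left action'').

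The one step that is more than bookkeeping is to check that $\theta(\l+2)$ on the left is genuinely a new direction, i.e.\ that $\theta(\l+2)*\sE^{(2)}(\l)$ is not of the form $\sE^{(2)}(\l)*\rho$. This goes exactly as in Proposition~\ref{prop:E2maps}: if it were, then convolving on the left with $\sE(\l+3)$ and using Proposition~\ref{prop:theta_k} (in the form $\sE(\l+3)*\sE^{(2)}(\l)\cong\sE^{(3)}(\l+1)\otimes_\k H^\star(\p^2)$) would give $\sE(\l+3)*\theta(\l+2)*\sE^{(2)}(\l)\cong(\sE^{(3)}(\l+1)*\rho)\otimes_\k H^\star(\p^2)$; since $\sE^{(3)}(\l+1)$ is a sheaf, the source and target of $\sE^{(3)}(\l+1)*\rho$ have disjoint cohomological support, so this map induces the zero map on all cohomology sheaves, and tensoring with $H^\star(\p^2)$ preserves this. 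But the Remark following the Definition of $\theta$ (Proposition~\ref{prop:theta_k} with $r=2$) says $\sE(\l+3)*\theta(\l+2)*\sE^{(2)}(\l)$ induces an isomorphism on all but two of the summands $\sE^{(3)}(\l+1)$, in particular is nonzero on cohomology --- a contradiction.

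Putting this together, the $\k$ factor is spanned by $\theta(\l+2)$ on the left, and refining the right-action space by $\Hom(\O_{\Delta(\l-2)}\la-1\ra,\O_{\Delta(\l-2)}\la1\ra)=\k\cdot\theta(\l-2)\oplus V(\l-2)^{tr}$ (the definition of transient maps; when $\l-2=-N$ the whole space is transient) shows that every map $\sE^{(2)}(\l)\la-1\ra\to\sE^{(2)}(\l)\la1\ra$ has the form $(a\theta(\l+2),\,b\theta(\l-2)+\phi)$ with $a,b\in\k$ and $\phi$ transient. The case $\l\ge0$ is the mirror image and yields $(a\theta(\l+2)+\phi,\,b\theta(\l-2))$. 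The main obstacle throughout is purely the bookkeeping: unwinding the adjunctions and Corollary~\ref{cor2:EF=FE} while keeping track of all the cohomological and $\k^\times$-grading shifts, and in particular pinning down (for each parity of $\l$ and at the boundary $\l=0$) which summand of $\sF^{(2)}(\l)*\sE^{(2)}(\l)$ carries the $\theta(\l+2)$ direction; no genuinely new idea beyond the proof of Proposition~\ref{prop:E2maps} is required.
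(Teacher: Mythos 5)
Your proof is correct and follows essentially the same route as the paper's: reduce by adjunction to $\Hom(\O_\Delta,\sF^{(2)}(\l)*\sE^{(2)}(\l)\la 2\l+2\ra)$, decompose via Corollary~\ref{cor2:EF=FE}, show the $j=0,1$ summands die (except for one copy of $\k$ at $\l=0$) while the $j=2$ term contributes $\Hom(\O_\Delta,\O_\Delta)\oplus\Hom(\O_\Delta,\O_\Delta\la2\ra)$, identify the latter with right-actions, and then use linear independence of $(\theta(\l+2),0)$ to account for the remaining $\k$. The paper dispatches that last independence step with a one-line appeal to Proposition~\ref{prop:theta_k}; you unwind it explicitly by convolving on the left with $\sE(\l+3)$ and comparing against the Remark following the definition of $\theta$, which is exactly the intended argument (and the same one used in the proof of Proposition~\ref{prop:E2maps}), just spelled out.
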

\begin{proof}
This result is analogous to Proposition \ref{prop:E2maps} and the proof is very similar. 

Suppose $\l \le 0$. Then 
\begin{align*}
\Hom(\sE^{(2)}(\l), \sE^{(2)}(\l)\la2\ra) 
&\cong \Hom(\O_\Delta, \sE^{(2)}(\l)_R * \sE^{(2)}(\l)\la2\ra) \\
&\cong \Hom(\O_\Delta, \sF^{(2)}(\l) * \sE^{(2)}(\l) \la2\l+2\ra).
\end{align*}
Now by Corollary \ref{cor2:EF=FE}
\begin{align*}
\sF^{(2)}(\l) * \sE^{(2)}(\l) \cong \sE^{(2)}(\l-4) * \sF^{(2)}(\l-4) 
&\oplus \sE(\l-3) * \sF(\l-3) \otimes_\k H^\star(\bG(1,-\l+2)) \\
&\oplus \O_\Delta \otimes_\k H^\star(\bG(2, -\l+2)). 
\end{align*}
Now 
\begin{align*}
\Hom(\O_\Delta, \sE^{(2)}(\l-4) * \sF^{(2)}(\l-4)\la2\l+2\ra) 
&\cong \Hom(\O_\Delta, \sF^{(2)}(\l-4)_R * \sF^{(2)}(\l-4)\la4\l-6)\ra) \\
&\cong \Hom(\sF^{(2)}(\l-4), \sF^{(2)}(\l-4) \la4\l-6\ra) 
\end{align*}
is zero for $\l \le 0$. Also 
\begin{eqnarray*}
\Hom(\O_\Delta, \sE(\l-3) * \sF(\l-3)) 
&\cong& \Hom(\O_\Delta, \sF(\l-3)_R * \sF(\l-3)\la\l-3\ra) \\
&\cong& \Hom(\sF(\l-3), \sF(\l-3) \la\l-3\ra)
\end{eqnarray*}
so that 
\begin{equation*}
 \Hom(\O_\Delta, \sE(\l-3) * \sF(\l-3) \otimes_\k H^\star(\bG(1,-\l+2) \la2\l+2\ra) 
\cong \Hom(\sF(\l-3), \sF(\l-3) \otimes_\k H^\star(\p^{-\l+1}) \la3\l-1\ra)
\end{equation*}
is zero if $\l < 0$ and isomorphic to $\Hom(\sF(-3), \sF(-3) \otimes_\k H^\star(\p^1) \la1\ra) \cong \k$ if $\l=0$. 

Finally, notice that $H^\star(\bG(2,-\l+2))$ is supported in homological degrees $\ge 2\l$ and one-dimensional in lowest homological degrees $2\l$ and $2\l+2$ (remember $\l \le 0$). It follows that if $\l < 0$ 
$$\Hom(\O_\Delta, \O_\Delta \otimes_\k H^\star(\bG(2, -\l+2)\la2\l+2\ra) \cong \Hom(\O_\Delta,\O_\Delta) \oplus \Hom(\O_\Delta, \O_\Delta \la2\ra)$$
while if $\l = 0$ we get $\Hom(\O_\Delta, \O_\Delta \otimes_\k H^\star(\bG(2,2)\la2\ra) \cong \Hom(\O_\Delta, \O_\Delta\la2\ra)$. 

So if $\l \le 0$ the space of maps $\Hom(\sE^{(2)}(\l), \sE^{(2)}(\l)\la2\ra)$ is spanned by maps of the form $(0, b \theta(\l-2) + \phi)$ (corresponding to the factor $\Hom(\O_\Delta, \O_\Delta \la2\ra)$ in the calculation above) and one more map (corresponding to the factor $\Hom(\O_\Delta, \O_\Delta) \cong \k$). Since $(\theta(\l+2),0)$ is linearly independent to the maps above (as a corollary of Proposition \ref{prop:theta_k}) we can use it as the extra map. 

The proof for $\l \ge 0$ is analogous. 
\end{proof}

\begin{Proposition}\label{prop:defX} There exists a unique $a(\l+3) \ne 0$ so that $\hA=\hD'$ and $\hA'=\hD$ in the notation above.
\end{Proposition}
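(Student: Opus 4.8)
The plan is to reduce the assertion to a single linear equation in the unknown $a(\l+3)$ and to solve that equation modulo transient maps. By Lemma \ref{prop:lemX} one already has $\hB+\hB'=0=\hC+\hC'$ and $\hA+\hA'=\hD+\hD'$, so — exactly as in Remark \ref{rem:nilhecke} — nil affine Hecke relation (iii) for the pair $\sE(\l+3)*\sE(\l+1)$ is equivalent to the single identity $\hA=\hD'$ (equivalently $\hA'=\hD$), and hence to the trace identity $\trace(I*X(\l+1))=\trace(X(\l+3)*I)$ in $\Hom(\sE^{(2)}(\l+2),\sE^{(2)}(\l+2)\la2\ra)$; I would work with the trace form because it is independent of the choice of isomorphism (\ref{eq:iso3}). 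Now $I*X(\l+1)$ does not involve $a(\l+3)$, whereas $X(\l+3)*I$ is affine-linear in it: the $a(\l+3)$-part is left-convolution by $\theta(\l+4)$, which — being induced by a kernel endomorphism on the $Y(\l+4)$-factor — commutes with (\ref{eq:iso3}), hence has diagonal matrix with both diagonal entries equal to $\theta(\l+4)*\sE^{(2)}(\l+2)$. Using $b(\l+3)=-a(\l+1)$ and the fact that right-convolution by $\theta(\l)$ is likewise diagonal, the condition $\hA=\hD'$ unwinds to
\[
a(\l+3)\cdot\bigl(\theta(\l+4)*\sE^{(2)}(\l+2)\bigr)\;=\;a(\l+1)\cdot\trace\bigl(\sE(\l+3)*\theta(\l+2)*\sE(\l+1)\bigr)\;+\;b(\l+1)\cdot\bigl(\sE^{(2)}(\l+2)*\theta(\l)\bigr),
\]
so the Proposition amounts to the claim that the right-hand side is a non-zero scalar multiple of $\theta(\l+4)*\sE^{(2)}(\l+2)$.

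I would then pass to the quotient by transient maps, which by Proposition \ref{prop:E22maps} is two-dimensional, spanned by the classes of the left action $\theta(\l+4)*\sE^{(2)}(\l+2)$ and the right action $\sE^{(2)}(\l+2)*\theta(\l)$ (the left class being non-zero by Proposition \ref{prop:transients}); uniqueness is then immediate. For existence and for $a(\l+3)\ne 0$ one must locate the class of $\tau:=\trace(\sE(\l+3)*\theta(\l+2)*\sE(\l+1))$, i.e.\ understand how the intermediate deformation direction $\theta(\l+2)$ decomposes over the two adjacent ones $\theta(\l+4)$ and $\theta(\l)$. I would do this by observing that the off-diagonal entries of the matrix of the middle $\theta(\l+2)$-action are controlled exactly as in the proof of Lemma \ref{lem:Bnotzero} (one of them a non-zero multiple of $I$, from the deformed part of Proposition \ref{prop:theta_k}), while its diagonal entries can be pinned down after convolving once more with $\sE$ (resp.\ $\sF$) and applying the deformed parts of Propositions \ref{prop:theta_k} and \ref{prop2:EF=FE} to detect on which summands the resulting map induces a cohomology isomorphism. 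This computation should show that the $\theta(\l+4)$-component of $\tau$ is non-zero, so that — since $a(\l+1)\ne 0$ by the inductive hypothesis — the right-hand side above has non-zero $\theta(\l+4)$-component and any solution $a(\l+3)$ is non-zero. For the vanishing of the $\theta(\l)$-component of the right-hand side (which is what makes a solution exist) one invokes nil affine Hecke relation (iii) for the previous pair $\sE(\l+1)*\sE(\l-1)$, transported to the present $\Hom$-space by convolution: its own trace identity relates $a(\l-1)=-b(\l+1)$ to $a(\l+1)$ precisely so as to cancel the $\theta(\l)$-term coming from $b(\l+1)$ against the $\theta(\l)$-component of $a(\l+1)\tau$. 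Combining, the right-hand side is a non-zero multiple of $\theta(\l+4)*\sE^{(2)}(\l+2)$, which determines $a(\l+3)\in\k^\times$ uniquely; the case $\l+2\ge 0$ is symmetric, with $\sF$'s in place of $\sE$'s.

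The hard part will be the second paragraph: the explicit determination of the $\theta(\l+4)$- and $\theta(\l)$-components of $\trace(\sE(\l+3)*\theta(\l+2)*\sE(\l+1))$ modulo transients, together with the bookkeeping needed to see that the inductive instance of relation (iii) is exactly what kills the unwanted $\theta(\l)$-component. Everything else is a formal consequence of results already in hand (Lemma \ref{prop:lemX}, Remark \ref{rem:nilhecke}, Propositions \ref{prop:E22maps}, \ref{prop:theta_k}, \ref{prop2:EF=FE}, and Lemma \ref{lem:Bnotzero}).
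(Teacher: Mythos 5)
Your reduction to the single trace identity $\trace(I*X(\l+1))=\trace(X(\l+3)*I)$ is correct and equivalent to the paper's target $\hA=\hD'$, and your decomposition of each side into $\theta(\l+4)$-, $\theta(\l+2)$- and $\theta(\l)$-contributions is the right bookkeeping. But the two claims that actually carry the proof --- that the $\theta(\l)$-component of $a(\l+1)\tau + b(\l+1)\theta(\l)$ vanishes (existence of a solution) and that the $\theta(\l+4)$-component of $\tau$ is nonzero (hence $a(\l+3)\ne 0$) --- are not proved; you acknowledge this and only sketch a plan. The sketch is too vague to count as a proof, and it is not at all clear it closes. For the first claim, ``transporting the previous trace identity by convolution'' is not an operation you define, and simply convolving the inductive identity with $\sE(\l+3)$ on the left lands you in $\Hom(\sE(\l+3)*\sE(\l+1)*\sE(\l-1),\,\cdot)$, not in the two-dimensional quotient $\Hom(\sE^{(2)}(\l+2)\la-1\ra,\sE^{(2)}(\l+2)\la1\ra)/V^{tr}$ where your linear equation lives; you still need an argument extracting a scalar relation, and it is exactly that extraction that is the content of the proposition. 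For the second claim, ``convolve once more with $\sE$ and detect which summands get a cohomology isomorphism'' does not by itself compute the diagonal entries $P,S$ of the middle $\theta(\l+2)$-action: Propositions \ref{prop:theta_k} and \ref{prop2:EF=FE} control the cohomology-level effect of $\theta$, but the trace $P+S$ lives in a $\Hom$-space where transients survive and the deformed propositions do not directly separate $u_0$ from the transient part.

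The paper closes both gaps by a different and sharper device: it forms the triple composition $X(\l+3)*X(\l+1)*X(\l-1)$. By the inductive instance of relation (iii), $X(\l+1)*X(\l-1)$ is diagonal, so the triple product factors through $X(\l+3)*(AD-BC)$ on $\sE(\l+3)*\sE^{(2)}(\l)\cong \sE^{(3)}\otimes H^\star(\p^2)$, which vanishes on cohomology for degree reasons. Comparing this with the matrix of $X(\l+3)*X(\l+1)$ (whose only low-degree entry is $\hB(\hA'-\hD)$, with $\hB\ne 0$ by Lemma \ref{lem:Bnotzero}) forces $(\hA'-\hD)*X(\l-1)$ to vanish on cohomology, and then Proposition \ref{prop:theta_k} shows the $\theta(\l)$-coefficient of $\hA'-\hD$ must be zero. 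The $\theta(\l+4)$-coefficient is then killed by redefining $X(\l+3)$ by a multiple of $(\theta(\l+4),0)$, and $a(\l+3)\ne 0$ is proved separately by a contradiction at the next level using $\sE(\l+5)*\sE(\l+3)*\sE(\l+1)$. You would need to supply arguments of comparable force to replace these two steps; as written, your proposal states what must be shown but does not show it.
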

\begin{proof}
Choose $a(\l+3)$ arbitrarily and consider the map 
\begin{eqnarray*}
X(\l+3) * X(\l+1) * X(\l-1) 
&:& \sE(\l+3) * \sE(\l+1) * \sE(\l-1) \la-3\ra \\
& \rightarrow & \sE(\l+3) * \sE(\l+1) * \sE(\l-1) \la3\ra 
\end{eqnarray*}
at the level of cohomology. Recall that 
$$\sE(\l+3) * \sE(\l+1) * \sE(\l-1) \cong \sE^{(3)}(\l+1) \otimes_\k H^\star(\Fl_3).$$

Now 
$$X(\l+1) * X(\l-1) = \left( \begin{matrix} A' & B' \\ C' & D' \end{matrix} \right) \cdot \left( \begin{matrix} A & B \\ C & D \end{matrix} \right) = \left( \begin{matrix} AD-BC & 0 \\ 0 & AD-BC \end{matrix} \right)$$
since by induction we have nil affine Hecke relation (iii) and so $B+B'=0=C+C'$ and $A=D'$ and $A'=D$. This means that 
$$X(\l+3) * X(\l+1) * X(\l-1) = X(\l+3) * (AD-BC) \otimes_\k H^\star(\p^1)$$
where
$$X(\l+3) * (AD-BC): \sE(\l+3) * \sE^{(2)}(\l)\la-3\ra \rightarrow \sE(\l+3) * \sE^{(2)}(\l)\la3\ra.$$
Since $\sE(\l+3) * \sE^{(2)}(\l) \cong \sE^{(3)} \otimes_\k H^\star(\p^2)$ such a map is automatically zero at the level of cohomology (regardless of our choice of $X(\l+3)$). Thus $X(\l+3) * X(\l+1) * X(\l-1)$ induces zero at the level of cohomology. 

On the other hand we can consider 
$$X(\l+3) * X(\l+1) = \left( \begin{matrix} \hA' & \hB' \\ \hC' & \hD' \end{matrix} \right) \cdot \left( \begin{matrix} \hA & \hB \\ \hC & \hD \end{matrix} \right) = \left( \begin{matrix} * & \hB(\hA'-\hD) \\ * & * \end{matrix} \right)
$$
where we use that $\hB+\hB'=0$. Each entry marked $*$ (whose precise value we do not care about) has homological degree four or six. So by degree considerations each entry $*$ induces zero on the cohomology of 
$$\sE(\l+3) * \sE(\l+1) \cong \sE^{(2)}(\l+2) \otimes_\k H^\star(\p^1).$$ 
Hence 
$$\hB(\hA'-\hD) * X(\l-1): \sE^{(2)}(\l+2) * \sE(\l-1)\la-2\ra \rightarrow \sE^{(2)}(\l+2) * \sE(\l-1)\la2\ra$$
must also induce zero at the level of cohomology. Notice that $a(\l+1) \ne 0$ by induction so that $\hB \ne 0$ by Lemma \ref{lem:Bnotzero} and hence $(\hA'-\hD) * X(\l-1)$ must induce zero. 

By proposition \ref{prop:E22maps} we know that (modulo transient maps) $(\hA'-\hD) = (u \theta(\l+4), v \theta(\l))$ for some $u,v \in \k$. Also $X(\l-1) = (a(\l-1)\theta(\l), -a(\l-3) \theta(\l-2))$ where $a(\l-1) \ne 0$. Thus if $v \ne 0$ then by Proposition \ref{prop:theta_k} both
$$(\hA'-\hD) * I \text{ and } I * X(\l-1): \sE^{(3)}(\l+1)\la-1\ra \otimes_\k H^\star(\p^2) \rightarrow \sE^{(3)}(\l+1)\la1\ra \otimes_\k H^\star(\p^2)$$
induce an isomorphism on the two copies of $\sE^{(3)}(\l+1)$ in homological degrees $-1$ and $1$. Consequently, the composition 
$$((\hA'-\hD) * I) \circ (I * X(\l-1)) = (\hA'-\hD) * X(\l-1)$$
would induce an isomorphism on one copy of $\sE^{(3)}(\l+1)$. But we showed above this is not the case so $v=0$.

Now notice that the map 
$$(\theta(\l+4), 0, 0): \sE(\l+3) * \sE(\l+1) \la-1\ra \rightarrow \sE(\l+3) * \sE(\l+1) \la1\ra$$
corresponds to $\left( \begin{matrix} \theta(\l+4) & 0 \\ 0 & \theta(\l+4) \end{matrix} \right)$ under the isomorphism (\ref{eq:iso3}). Thus for $c \in \k$ we get
$$(X(\l+3) + (c \theta(\l+4), 0)) * X(\l+1) = \left( \begin{matrix} * & \hB (u \theta(\l+4), 0) \\ * & * \end{matrix} \right) + \left( \begin{matrix} * & \hB c \theta(\l+4) \\ * & * \end{matrix} \right).$$

So if we take $c=-u$ and replace $X(\l+3)$ by $X(\l+3) + (c \theta(\l+4),0)$ then we get that $\hA'=\hD$. Since $\hA+\hA'=\hA+\hD'$ we also get $\hA=\hD'$ and hence there exists a unique $a(\l+3)$ as required. 

The only thing left is to prove that $a(\l+3) \ne 0$. To do this consider 
$$\sE(\l+5) * \sE(\l+3) * \sE(\l+1).$$
Note that if $\sE(\l+5) = 0$ (i.e. $\l+5 \ge N$ so we are past the highest weight) then $Y(\l+4) = Y(N)$ and hence $\theta(\l+4)=0$ so there is nothing to prove. 

By construction we know that $X(\l+3) * X(\l+1)$ is diagonal. From the argument above we know this means that 
\begin{equation}\label{eq:iso4}
X(\l+5) * X(\l+3) = \left( \begin{matrix} * & (u' \theta(\l+6), 0) \\ * & * \end{matrix} \right)
\end{equation} 
for any $X(\l+5)$ we like. But if $a(\l+3) = 0$ then 
$$I * X(\l+3) = (0, 0,  -a(\l+1) \theta(\l+2)) = \left( \begin{matrix} (0,-a(\l+1) \theta(\l+2)) & 0 \\ 0 & (0,-a(\l+1) \theta(\l+2)) \end{matrix} \right)$$
while we can take
$$X(\l+5) * I  = (0, \theta(\l+4), 0) = \left( \begin{matrix} * & \beta \\ * & * \end{matrix} \right)$$
where $\beta \in \k^\times$. Then 
$$X(\l+5) *  X(\l+3) = \left( \begin{matrix} * & \beta (0,-a(\l+1) \theta(\l+2)) \\ * & * \end{matrix} \right)$$
contradicting equation (\ref{eq:iso4}). Thus $a(\l+3) \ne 0$ and we are done. 
\end{proof}

Thus repeatedly using Proposition \ref{prop:defX} we find that:

\begin{Corollary}\label{cor:defX} There exist non-zero $a$s such that the $X$s defined by 
$$X(\l+1) := (a(\l+1) \theta(\l+2), -a(\l-1) \theta(\l))$$ 
together with the $T$s defined by equation (\ref{eq:T}) satisfy nil affine Hecke relation (iii) (modulo transients). 
\end{Corollary}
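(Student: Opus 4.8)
The plan is a short induction on the weight, with the real content delegated to Proposition \ref{prop:defX}. Throughout one works modulo transient maps --- which is exactly the sense in which the Corollary is asserted --- and one fixes once and for all the normalization $b(\l+1) = -a(\l-1)$, so that every $X(\l+1)$ has the shape $(a(\l+1)\theta(\l+2), -a(\l-1)\theta(\l))$ recorded in the statement. It suffices to produce nonzero scalars $a(\l)$, one for each weight $\l$ for which $\sE(\l)$ is defined (these all have the same parity by our standing assumption, so they form an arithmetic progression of common difference $2$), such that nil affine Hecke relation (iii) holds for each consecutive pair $\sE(\l+1) * \sE(\l-1)$. The maps $T(\l)$ are then automatically well defined by (\ref{eq:T}): since $a(\l-1) \neq 0$, Lemma \ref{lem:Bnotzero} makes the entry $B$ a nonzero multiple of $I$, and the matrix (\ref{eq:T}) is conjugation invariant, hence independent of the auxiliary splitting (\ref{eq:iso2}).

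For the base case I would start at the extreme weight: on $Y(-N)$ every map $\O_\Delta\la-1\ra \to \O_\Delta\la1\ra$ is transient, so $X(-N+1) := (\theta(-N+2), 0)$ is forced up to a rescaling $a(-N+1) \in \k^\times$, and $\theta(-N) = 0$ makes the second slot vacuous; since there is no pair $\sE(-N+1) * \sE(-N-1)$ to constrain, relation (iii) holds vacuously at this stage. The inductive step is then precisely Proposition \ref{prop:defX}: given $X(-N+1), \dots, X(\l+1)$ with all preceding $a$'s nonzero and relation (iii) satisfied through $\sE(\l+1) * \sE(\l-1)$, that proposition furnishes a unique nonzero $a(\l+3)$ for which $\hA = \hD'$ and $\hA' = \hD$ for the next pair $\sE(\l+3) * \sE(\l+1)$. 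Combining this with Lemma \ref{prop:lemX}, which supplies $\hB + \hB' = 0 = \hC + \hC'$ for that pair, and with the elementary identification recorded just after (\ref{eq:T}) of relation (iii) with the system $B + B' = 0 = C + C'$, $A = D'$, $A' = D$, one gets relation (iii) through $\sE(\l+3) * \sE(\l+1)$ with all scalars still nonzero. Iterating the step until the weight reaches $N$ completes the construction and proves the Corollary.

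I do not expect a serious obstacle: the one genuinely delicate point --- that the value of $a(\l+3)$ forced by the two matrix equations $\hA = \hD'$, $\hA' = \hD$ cannot vanish --- is already handled inside the proof of Proposition \ref{prop:defX} (by testing against $\sE(\l+5) * \sE(\l+3) * \sE(\l+1)$). The only computation I would still want to write out is the ``easy to check'' equivalence between nil affine Hecke relation (iii) and the matrix system above; this I would verify by substituting the explicit $T(\l)$ from (\ref{eq:T}) into the three expressions appearing in (iii) and expanding --- routine, but worth recording for completeness.
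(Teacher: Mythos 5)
Your proof is correct and follows the same route as the paper: normalize $b(\l+1) = -a(\l-1)$, anchor the induction at $X(-N+1)$ (where all maps on $Y(-N)$ are transient, so only the scalar $a(-N+1) \in \k^\times$ is free), and step upward via Proposition~\ref{prop:defX} together with Lemma~\ref{prop:lemX} and the matrix reformulation of relation~(iii) that follows equation~\eqref{eq:T}. The one computation you flag as ``worth recording'' --- the equivalence of relation~(iii) with $B+B'=0=C+C'$, $A=D'$, $A'=D$ --- is indeed exactly what the paper also leaves as an ``easy to check'' step, so you are in good company.
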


\subsubsection{Defining the $X$s on the nose}

At this point we can choose our $X$s and $T$s so that they satisfy nil affine Hecke relation (iii) modulo transients. We will now readjust these $X$s by appropriate transients so that relation (iii) holds on the nose. 

If $N$ is odd we start with $X(0)$ which we keep unchanged. Now any map 
$$(\phi(1), \phi(-1), \phi(-3)): \sE(0) * \sE(-2) \la-1\ra \rightarrow \sE(0) * \sE(-2) \la1\ra$$
where the $\phi$ is a transient map is equivalent to a map $(0, 0, \phi)$ since we can slide over transient maps. So under the isomorphism
$$\sE(0) * \sE(-2) \xrightarrow{\sim} \sE^{(2)}(-1)\la-1\ra \oplus \sE^{(2)}(-1)\la1\ra$$
any combination of transient maps is of the form $\left( \begin{matrix} \phi & 0 \\ 0 & \phi \end{matrix} \right)$. 

Since $X(0)$ and $X(-2)$ satisfy nil affine Hecke relation (iii) modulo transients we conclude that 
$$X(0) * I = \left( \begin{matrix} A & B \\ C & D \end{matrix} \right) \text{ and } 
I * X(-2) = \left( \begin{matrix} D + \phi & -B \\ -C & A + \phi \end{matrix} \right)$$
for some transient map $\phi: \O_{\Delta(-3)}\la-1\ra \rightarrow \O_{\Delta(-3)}\la1\ra$. 

So if we replace $X(-2) = (a(-2) \theta(-1), b(-2) \theta(-3))$ by 
$$X(-2) := (a(-2) \theta(-1), b(-2) \theta(-3) - \phi)$$ 
then we get nil affine Hecke relation (iii) on the nose. Notice that the $\phi$ by which we had to change $X(-2)$ is completely determined. Now we can repeat with $X(-4), X(-6), \dots X(-N)$ and then similarly with $X(2), X(4), \dots, X(N)$. The overall freedom we had in redefining the $X$s comes from being able to choose $X(0)$ arbitrarily. This choice is parametrized by $V(1)^{tr} \cong V(-1)^{tr}$. 

If $N$ is even we do the same thing except starting with $X(-1)$. We then recursively redefine $X(-3), \dots, X(-N)$ as above followed by $X(1), \dots, X(N)$. This time the freedom we have in redefining the $X$s comes from being able to choose $X(-1)$ arbitrarily. This choice is parametrized by $V(-2)^{tr}$. Notice that by symmetry we could have started with $X(1)$ and then the freedom would have been parametrized by $V(2)^{tr}$ but by Proposition \ref{prop:transients} $V(-2)^{tr} \cong V(2)^{tr}$.

This completes the proof of nil affine Hecke relation (iii) in Theorem \ref{th:nilHeckerels} as well as the proof regarding the freedom we have in choosing the $X$s and $T$s. 

\subsection{Proof of nil affine Hecke Relations (i) and (ii)}

Nil Hecke relations (i) and (ii) now follow fairly easily from relation (iii). 

Relation (i) is immediate. Notice that 
$$T(\l)^2: \sE^{(2)}(\l) \otimes_\k (\k\la1\ra \oplus \k\la3\ra) \rightarrow \sE^{(2)}(\l) \otimes_\k (\k\la-3\ra \oplus \k\la-1\ra)$$ 
and there are no negative homological degree endomorphisms of $\sE^{(2)}$ (since it is a sheaf) so $T(\l)^2=0$. 

To prove relation (ii) note that 
$$\sE(\l+2) * \sE(\l) * \sE(\l-2) \cong \sE^{(3)}(\l) \otimes \left( \k\la-3\ra \oplus \k\la-1\ra^{\oplus 2} \oplus \k\la1\ra^{\oplus 2} \oplus \k\la3\ra \right).$$
Since $\End(\sE^{(3)}(\l)) \cong \k \cdot I$ this means
$$\Hom(\sE(\l+2) * \sE(\l) * \sE(\l-2)\la3\ra, \sE(\l+2) * \sE(\l) * \sE(\l-2) \la-3\ra) \cong \k.$$
Thus $(I * T(\l-1)) \circ (T(\l+1) * I) \circ (I * T(\l-1))$ and $(T(\l+1) * I) \circ (I * T(\l-1)) \circ (T(\l+1) * I)$ must be non-zero multiples of each other or one of them is zero. The rest of the argument below follows formally from relations (iii). Note that we will not use relation (ii) in the proof of (iii).

\begin{Lemma} $(T(\l+1) * I) \circ (I * T(\l-1)) \ne 0$.
\end{Lemma}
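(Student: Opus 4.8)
\medskip

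The plan is to argue by contradiction, using only nil affine Hecke relation (iii) (and the interchange law), both of which are available at this point. Write $T_1 := T(\l+1) * I$ and $T_2 := I * T(\l-1)$, regarded as endomorphisms of $\sE(\l+2) * \sE(\l) * \sE(\l-2)$, and write $x_2 := I * X(\l) * I$ and $x_3 := I * I * X(\l-2)$ for the $X$-maps acting on the second and third ``strands''. We may assume $\sE^{(3)}(\l) \ne 0$, since otherwise $\sE(\l+2) * \sE(\l) * \sE(\l-2) \cong \sE^{(3)}(\l) \otimes_\k H^\star(\Fl_3)$ vanishes and there is nothing to prove.

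The first step is to observe that $T_1 \ne 0$. By the definition (\ref{eq:T}) and Lemma \ref{lem:Bnotzero} (the $a$'s being nonzero by Corollary \ref{cor:defX}), under a fixed isomorphism $\sE(\l+2) * \sE(\l) \cong \sE^{(2)}(\l+1)\la-1\ra \oplus \sE^{(2)}(\l+1)\la1\ra$ the morphism $T(\l+1)$ is a matrix whose unique nonzero entry is $-B^{-1}$, a nonzero scalar multiple of $\id_{\sE^{(2)}(\l+1)}$. Convolving that entry with $I = \id_{\sE(\l-2)}$ gives the same nonzero scalar multiple of $\id_{\sE^{(2)}(\l+1) * \sE(\l-2)}$, and $\sE^{(2)}(\l+1) * \sE(\l-2) \cong \sE^{(3)}(\l) \otimes_\k H^\star(\p^2) \ne 0$ by Corollary \ref{cor:Ecomps}. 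Hence $T_1 \ne 0$. The point is not that $(-) * I$ is faithful (it need not be), only that it sends a nonzero scalar multiple of the identity of a nonzero object to a nonzero scalar multiple of the identity of a nonzero object.

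The second step uses relation (iii) for the pair $(\l,\l-2)$, i.e. relation (iii) with $\l$ replaced by $\l-1$; one of its two forms reads $T(\l-1) \circ (X(\l) * I) - (I * X(\l-2)) \circ T(\l-1) = I$ on $\sE(\l) * \sE(\l-2)$. Convolving on the left with $\id_{\sE(\l+2)}$ gives $T_2 x_2 = x_3 T_2 + I$. Now suppose for contradiction that $T_1 T_2 = 0$. Computing $T_1 T_2 x_2$ in two ways: on one hand it is $(T_1 T_2) x_2 = 0$; on the other,
\[
T_1 (T_2 x_2) = T_1(x_3 T_2 + I) = (T_1 x_3) T_2 + T_1 = (x_3 T_1) T_2 + T_1 = x_3 (T_1 T_2) + T_1 = T_1,
\]
where $T_1 x_3 = x_3 T_1$ because $T_1$ acts on the first two strands and $x_3$ on the third, so they commute by the interchange law. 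Therefore $T_1 = 0$, contradicting the first step; hence $T_1 T_2 \ne 0$.

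I expect the only delicate point to be the nonvanishing of $T_1$, and the relevant input is exactly the explicit matrix form (\ref{eq:T}) of $T(\l+1)$ together with Lemma \ref{lem:Bnotzero}: these guarantee that the nonzero component of $T(\l+1)$ is literally a scalar multiple of an identity morphism of a nonzero object, so that it survives convolution with $I$. Everything else is a one-line manipulation in the nil affine Hecke algebra generated by the $X$s and $T$s, using only relation (iii) and the interchange law — in particular nothing that depends on relation (ii), which this Lemma is being used to establish.
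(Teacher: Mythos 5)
Your proof is correct and takes essentially the same route as the paper: both prove the Lemma by contradiction, using the form $T_2 x_2 = x_3 T_2 + I$ of nil affine Hecke relation (iii) convolved with $\id_{\sE(\l+2)}$, together with the interchange commutation $T_1 x_3 = x_3 T_1$, to deduce that $T_1 T_2 = 0$ would force $T_1 = T(\l+1) * I = 0$. The one place you add value is your first step: the paper simply declares $T(\l+1) * I = 0$ to be a ``contradiction'' without comment, whereas you explicitly verify $T_1 \ne 0$ from the matrix form (\ref{eq:T}), Lemma \ref{lem:Bnotzero}, and Corollary \ref{cor:Ecomps}, and you also flag the degenerate case $\sE^{(3)}(\l) = 0$ where the statement is vacuous. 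These clarifications are welcome but do not change the underlying argument.
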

\begin{proof}
We have (using shorthand notation)
\begin{align*}
X(\l-2) \circ T(\l+1) \circ T(\l-1) &= T(\l+1) \circ X(\l-2) \circ T(\l-1) \\
&= T(\l+1) \circ (T(\l-1) \circ X(\l)  - I) \\
&= -T(\l+1)  + T(\l+1) \circ T(\l-1) \circ X(\l)
\end{align*}
where we use that $X(\l-1) =  I * I * X(\l-1)$ and $T(\l+1) = T(\l+1) * I$ commute to get the first equality. So if $T(\l+1) \circ T(\l-1) = 0$ then $T(\l+1) = 0$ (contradiction).
\end{proof}

Similar to above we obtain
\begin{equation} \label{eq:XsTs1}
X(\l-2) \circ T(\l-1) \circ T(\l+1) \circ T(\l-1)
= T(\l-1) \circ T(\l+1) \circ  T(\l-1) \circ X(\l+2) - T(\l+1) \circ T(\l-1).
\end{equation}
Notice that this means $T(\l-1) \circ T(\l+1) \circ T(\l-1) \ne 0$ because $T(\l+1) \circ T(\l-1) \ne 0$.

Again by similar manipulations we obtain,
\begin{equation} \label{eq:XsTs2}
X(\l-2) \circ T(\l+1) \circ T(\l-1) \circ T(\l+1)
= T(\l+1) \circ T(\l-1) \circ T(\l+1) * X(\l+2) - T(\l+1) \circ T(\l-1).
\end{equation}
Thus $T(\l+1) \circ T(\l-1) \circ T(\l+1) \ne 0$. So $ T(\l-1) \circ T(\l+1) \circ T(\l-1) = \mu (T(\l+1) \circ T(\l-1) \circ T(\l+1))$ for some $\mu \in \k^\times$. Combining (\ref{eq:XsTs1}) and (\ref{eq:XsTs2}) we obtain that $\mu (T(\l+1) \circ T(\l-1)) = T(\l+1) \circ T(\l-1) \ne 0$ so $\mu = 1$ and we are done.

\subsection{Some final isomorphisms}

Having proved Theorem \ref{th:nilHeckerels} we need to finish the proof of the main Theorem \ref{th:main} by checking that certain maps induce isomorphisms. 

The first of these is that 
$$\oplus_{i=0}^r (X(\l + r)^i * I) \circ \iota \la-2i\ra : \sE^{(r+1)}(\l) \otimes_\k H^\star(\p^r) \rightarrow \sE(\l+r) \circ \sE^{(r)}(\l-1)$$
and
$$\oplus_{i=0}^r \pi \la2i\ra * (X(\l+r)^i \circ I) : \sE(\l+r) \circ \sE^{(r)}(\l-1) \rightarrow \sE^{(r+1)}(\l) \otimes_\k H^\star(\p^r)$$
induce isomorphisms. Fortunately, the first isomorphism follows immediately from Proposition \ref{prop:theta_k} because
$$X(\l+r) * I = (a(\l+r) \theta(\l+r+1), b(\l+r) \theta(\l+r-1), 0)$$
induces the same map (up to non-zero multiple) on cohomology as $\Theta(\l-1+r) = (0, \theta(\l-1+r), 0)$ (here we used that $b(\l+r) = -a(\l+r-2) \ne 0$). The proof of the second isomorphism is the same. 

The second thing we need to check is that for $\l \le 0$
$$\sigma + \sum_{j=0}^{-\l-1} (I * X(\l+1)^j \la-2j\ra) \circ \eta: \sE(\l-1) * \sF(\l-1) \oplus \O_\Delta \otimes_{\k} H^\star(\p^{-\l-1}) \xrightarrow{\sim} \sF(\l+1) * \sE(\l+1)$$
induces an isomorphism (and similarly if $\l \ge 0$). Now
\begin{equation}\label{eq:2}
\sum_{j=0}^{-\l-1} (I * X(\l+1)^j \la-2j\ra) \circ \eta : \O_\Delta \otimes_{\k} H^\star(\p^{-\l-1}) \rightarrow \sF(\l+1) * \sE(\l+1) 
\end{equation}
induces an isomorphism $\O_\Delta \otimes_{\k} H^\star(\p^{-\l-1}) \rightarrow \sP$. To see this note that by Proposition \ref{prop2:EF=FE}, $I * (\theta(\l+2),0)$ acting on $\sF(\l+1) * \sE(\l+1)$ induces a map 
\begin{equation}\label{eq:1}
\sE(\l-1) * \sF(\l-1) \oplus \sP \rightarrow \sE(\l-1) * \sF(\l-1) [2] \oplus \sP[2]
\end{equation}
which on $\sP$ is an isomorphism (at the level of cohomology) between all but two copies of $\O_\Delta$. Since $I * (0, \theta(\l))$ induces zero at the level of cohomology the map 
$$I * X(\l+1) = I * (a \theta(\l+2), b \theta(\l))$$
induces the same map as (\ref{eq:1}) at the level of cohomology. Thus, as $j$ varies, $(I * X(\l+1)^j) \circ \eta$ maps $\O_\Delta$ onto every copy of $\O_\Delta$ in $\H^*(\sP)$. Thus, summing over all $j$ we get a quasi-isomorphism, which is isomorphism (\ref{eq:2}) in the derived category. 

It remains to show that 
$$\sigma: \sE(\l-1) * \sF(\l-1) \rightarrow \sF(\l+1) * \sE(\l+1) \cong \sE(\l-1) * \sF(\l-1) \oplus \sP$$
induces the zero map $\sE(\l-1) * \sF(\l-1) \rightarrow \sP$ and an isomorphism on $\sE(\l-1) * \sF(\l-1)$. The first claim follows from Lemma \ref{lem:FEtoP}. 

To see the second claim it suffices to show that $\sigma \ne 0$ because of Lemma \ref{lem:EFmapEF}.  To see that $\sigma \ne 0$ we look at the composition 
\begin{eqnarray*}
(I * X(\l+1)) \circ \sigma 
&=& (I * X(\l+1)) \circ (I * I * \epsilon) \circ (I * T(\l) * I) \circ (\eta * I * I) \\
&=& (I * I * \epsilon) \circ (I * X(\l+1) * I * I) \circ (I * T(\l) * I) \circ (\eta * I * I) \\
&=& (I * I * \epsilon) \circ \left(I + (I * T(\l) * I) \circ (I * I * X(\l-1) * I) \right) \circ (\eta * I * I) \\
&=& (I * I * \epsilon) \circ (\eta * I * I) + \sigma \circ (X(\l-1) * I)
\end{eqnarray*}
where we used nil affine Hecke relation (iii) to get the second last line. Now if $\sigma$ induces zero then this means that 
$$(I * I * \epsilon) \circ (\eta * I * I) = \eta \circ \epsilon: \sE(\l-1) * \sF(\l-1) \rightarrow \sF(\l+1) \circ \sE(\l+1)\la2\ra$$
induces zero. But this map is the composition 
$$\sE(\l-1) * \sF(\l-1) \xrightarrow{\epsilon} \O_\Delta \la-\l+1\ra \xrightarrow{\eta} \sF(\l+1) * \sE(\l+1) \la2\ra$$
where the second map is an inclusion of $\O_\Delta \la-\l+1\ra$ into the bottom of $\O_\Delta \otimes_\k H^\star(\p^{-l-1})\la2\ra$. Thus $\eta \circ \epsilon \ne 0$ and thus $\sigma \ne 0$. 

\begin{Lemma}\label{lem:EFmapEF} If $\l \le 0$ then 
\begin{equation*}
\Hom(\sE(\l-1) * \sF(\l-1), \sE(\l-1) * \sF(\l-1)[i]\{j\}) =  
\begin{cases} 
0  \text{ if } i < 0 \text{ or } i = 0 \ne j \\
\k \cdot I \text{ if } i = 0 = j 
\end{cases}
\end{equation*}
while if $\l \ge 0$ then 
\begin{equation*}
\Hom(\sF(\l+1) * \sE(\l+1), \sF(\l+1) * \sE(\l+1)[i]\{j\}) = 
\begin{cases} 
0  \text{ if } i < 0  \text{ or } i = 0 \ne j \\
\k \cdot I \text{ if } i = 0 = j.
\end{cases}
\end{equation*}
\end{Lemma}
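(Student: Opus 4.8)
The plan is to reduce, by repeated use of the adjunctions in condition (ii), to $\Hom$-groups of the form $\Hom(\sF^{(r)},\sF^{(r)}[c]\{d\})$, which are controlled by Lemma~\ref{lem:homs2}. I will treat $\l\le 0$; the case $\l\ge 0$ is the mirror image, obtained by interchanging the roles of $\sE$ and $\sF$. (If $\l=-N$ the composite $\sE(\l-1)*\sF(\l-1)$ vanishes and there is nothing to prove, so assume $-N<\l$.) Write $\sK:=\sE(\l-1)*\sF(\l-1)\in D(Y(\l)\times Y(\l))$. Condition (ii) gives $\sE(\l-1)_R=\sF(\l-1)[\l-1]\{1-\l\}$ and, using $(\sP_L)_R=\sP$, also $\sF(\l-1)_R=\sE(\l-1)[1-\l]\{\l-1\}$. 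Peeling the leftmost $\sE(\l-1)$ and then the remaining $\sF(\l-1)$ off the source of $\Hom(\sK,\sK[i]\{j\})$ via the standard adjunction $\Hom(\sQ*\sP,\sR)\cong\Hom(\sP,\sQ_R*\sR)$, these two shifts cancel exactly and one obtains
\[ \Hom(\sK,\sK[i]\{j\})\;\cong\;\Hom\bigl(\O_\Delta,\ \sE(\l-1)*\sF(\l-1)*\sE(\l-1)*\sF(\l-1)[i]\{j\}\bigr). \]

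Next I would insert the splitting of the inner composite. By Proposition~\ref{prop2:EF=FE}, applied in weight $\l-2\le 0$,
\[ \sF(\l-1)*\sE(\l-1)\;\cong\;\sE(\l-3)*\sF(\l-3)\ \oplus\ \O_\Delta\otimes_\k H^\star(\p^{1-\l}), \]
so the right-hand side above splits as an ``$\sE$-piece'' $\Hom(\O_\Delta,\sE(\l-1)*\sE(\l-3)*\sF(\l-3)*\sF(\l-1)[i]\{j\})$ plus an ``$\id$-piece'' $\Hom(\O_\Delta,\sK\otimes_\k H^\star(\p^{1-\l})[i]\{j\})$. For the $\id$-piece, peel $\sE(\l-1)$ off using $\Hom(\O_\Delta,\sE(\l-1)*\sF(\l-1)[a]\{b\})\cong\Hom(\sE(\l-1)_L,\sF(\l-1)[a]\{b\})$ with $\sE(\l-1)_L=\sF(\l-1)[1-\l]\{\l-1\}$; tracking the $H^\star(\p^{1-\l})$-grading this becomes $\bigoplus_{k=0}^{1-\l}\Hom(\sF(\l-1),\sF(\l-1)[i-2k]\{j+2k\})$. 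By Lemma~\ref{lem:homs2} each term vanishes unless $i-2k\ge 0$, so the $\id$-piece is $0$ for $i<0$, while for $i=0$ only the $k=0$ term $\Hom(\sF(\l-1),\sF(\l-1)\{j\})$ survives; this is $\k$ if $j=0$ and $0$ otherwise — the latter because the reduction in the proof of Lemma~\ref{lem:homs2} identifies $\bigoplus_d\Hom(\sF^{(r)},\sF^{(r)}\{d\})$ with $\End(\O_\Delta)$, which by condition (i) is $\k$ concentrated in internal degree $0$.

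For the $\sE$-piece, apply Corollary~\ref{cor:Ecomps} to rewrite $\sE(\l-1)*\sE(\l-3)\cong\sE^{(2)}(\l-2)\otimes_\k H^\star(\p^1)$ and $\sF(\l-3)*\sF(\l-1)\cong\sF^{(2)}(\l-2)\otimes_\k H^\star(\p^1)$, so that it becomes $\Hom(\O_\Delta,\sE^{(2)}(\l-2)*\sF^{(2)}(\l-2)\otimes_\k H^\star(\p^1)^{\otimes 2}[i]\{j\})$; then peel off $\sE^{(2)}(\l-2)$ via $\sE^{(2)}(\l-2)_L=\sF^{(2)}(\l-2)[-2(\l-2)]\{2(\l-2)\}$ to reach $\Hom(\sF^{(2)}(\l-2),\sF^{(2)}(\l-2)\otimes_\k H^\star(\p^1)^{\otimes 2}[i+2\l-4]\{j-2\l+4\})$. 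Since $\sF^{(2)}(\l-2)$ is a sheaf and $H^\star(\p^1)^{\otimes 2}$ lives in cohomological degrees $\{-2,0,0,2\}$, every summand is of the form $\Hom(\sF^{(2)},\sF^{(2)}[c]\{d\})$ with $c\le i+2\l-2<0$ whenever $i\le 0$ and $\l\le 0$; hence the $\sE$-piece vanishes throughout the range $i\le 0$. Adding the two pieces yields precisely the stated values.

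The only real obstacle is the careful bookkeeping of the cohomological and internal ($\{\cdot\}$) shifts through the successive adjunctions, so that they cancel as asserted; granting that, the statement drops out of Proposition~\ref{prop2:EF=FE}, Corollary~\ref{cor:Ecomps}, and Lemma~\ref{lem:homs2}. The case $\l\ge 0$ runs identically with $\sF(\l+1)*\sE(\l+1)$ in place of $\sE(\l-1)*\sF(\l-1)$, now pushing the intermediate composites ``upward'' in weight via the second half of Proposition~\ref{prop2:EF=FE}.
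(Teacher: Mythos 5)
Your proof is correct and follows essentially the same route as the paper's: one uses adjunctions to shift the $\Hom$ computation to a form where the inner composite $\sF(\l-1)*\sE(\l-1)$ can be decomposed via Proposition~\ref{prop2:EF=FE}, and then treats the identity piece and the $\sE$-piece separately, the former contributing $\k\cdot I$ in degree $(0,0)$ and the latter vanishing by degree counting against Lemma~\ref{lem:homs2}. The only cosmetic difference is that you apply two adjunctions up front to normalize to $\Hom(\O_\Delta,-)$, whereas the paper keeps one copy of $\sF(\l-1)$ in the source and applies the second adjunction only inside the $\sE$-piece; the shift bookkeeping and the final vanishing bound $i+2\l-2<0$ agree.
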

\begin{proof}
Suppose $\l \le 0$. Then 
\begin{align*}
&\Hom(\sE(\l-1) * \sF(\l-1), \sE(\l-1) * \sF(\l-1)[i]\{j\}) \\ 
&\cong \Hom(\sF(\l-1), \sF(\l-1) * \sE(\l-1) * \sF(\l-1) [\l-1+i]\{-\l+1+j\}) \\
&\cong \Hom(\sF(\l-1)[-\l+1-i]\{\l-1-j\}, \sE(\l-3) * \sF(\l-3) * \sF(\l-1) \oplus \sF(\l-1) \otimes_\k H^\star(\p^{-\l+1})). 
\end{align*}
Now $\Hom(\sF(\l-1)[-\l+1-i]\{\l-1-j\}, \sF(\l-1) \otimes_\k H^\star(\p^{-\l+1}))$ is zero if $i < 0$ or $i = 0 \ne j$ and $\k$ if $i=0=j$. Also,
\begin{align*}
& \Hom(\sF(\l-1)[-\l+1-i]\{\l-1-j\}, \sE(\l-3) * \sF(\l-3) * \sF(\l-1)) \\
&\cong \Hom(\sF(\l-3)[-\l+3]\{\l-3\} * \sF(\l-1)[-\l+1-i]\{\l-1-j\}, \sF(\l-3) * \sF(\l-1)) \\
&\cong \Hom(\sF^{(2)}(\l-2) \otimes_\k H^\star(\p^1), \sF^{(2)}(\l-2) \otimes_\k H^\star(\p^1)[2\l-4+i]\{-2\l+4+j\})
\end{align*}
is zero since $2\l-4+i < -2$. The result follows. 

The case $\l \ge 0$ is proved similarly. 
\end{proof}

\end{document}